\documentclass[12pt]{article}

\usepackage{mathrsfs}
\usepackage{enumitem}
\usepackage{overpic}
\usepackage{rotating}
\usepackage[colorlinks, linkcolor=blue, citecolor=blue]{hyperref}
\usepackage{color}
\usepackage{graphicx,subfigure,amsmath,amssymb,amsfonts,bm,epsfig,epsf,url,dsfont}
\usepackage{amsthm}
\usepackage{tikz}
\usetikzlibrary{arrows.meta, bending, positioning,calc}
\usepackage{bbm}      
\usepackage{booktabs}
\usepackage{cases}
\usepackage{fullpage}
\usepackage[small,bf]{caption}
\usepackage[top=1in,bottom=1in,left=1in,right=1in]{geometry}
\usepackage{fancybox}
\usepackage{algorithm}
\usepackage{verbatim}
\usepackage{algorithmic}
\usepackage{mathtools}
\usepackage{scalerel,stackengine}

\definecolor{myTeal}{RGB}{0, 128, 128}
\definecolor{myCoral}{RGB}{255, 127, 80}

\newcommand{\Var}{\text{Var}}

\newcommand{\KL}{D_{\mathrm{KL}}}

\newtheorem{definition}{Definition}

\newtheorem{theorem}{Theorem}
\newtheorem{corollary}[theorem]{Corollary}

\newtheorem{lemma}{Lemma}

\newtheorem{rmk}{Remark}

\newenvironment{fminipage}%
  {\begin{Sbox}\begin{minipage}}%
  {\end{minipage}\end{Sbox}\fbox{\TheSbox}}

\newcommand{\calA}{{\cal A}}
\newcommand{\calB}{{\cal B}}

\newcommand{\calD}{{\cal D}}
\newcommand{\calE}{{\cal E}}
\newcommand{\calF}{{\cal F}}

\newcommand{\calH}{{\cal H}}

\newcommand{\calK}{{\cal K}}
\newcommand{\calL}{{\cal L}}
\newcommand{\calM}{{\cal M}}
\newcommand{\calN}{{\cal N}}

\newcommand{\calP}{{\cal P}}
\newcommand{\calQ}{{\cal Q}}

\newcommand{\calS}{{\cal S}}

\newcommand{\calU}{{\cal U}}

\newcommand{\be}{\begin{equation}}
\newcommand{\ee}{\end{equation}}
\newcommand{\beqna}{\begin{eqnarray}}
\newcommand{\eeqna}{\end{eqnarray}}

\DeclarePairedDelimiterX{\set}[1]{\{}{\}}{\setargs{#1}}
\DeclarePairedDelimiterX{\cond}[1]{[}{]}{\setargs{#1}}
\NewDocumentCommand{\setargs}{>{\SplitArgument{1}{;}}m}
{\setargsaux#1}
\NewDocumentCommand{\setargsaux}{mm}
{\IfNoValueTF{#2}{#1} {#1\,\delimsize|\,\mathopen{}#2}}

\newcommand{\indep}{\perp \!\!\! \perp}

\makeatletter

\newcommand{\Ind}{\mathbbm{1}}
\newcommand{\R}{\mathbb{R}} 

\def\P{{\mathbb P}}
\newcommand{\p}[1]{\left(#1\right)}
\newcommand{\pp}[1]{\left[#1\right]}
\newcommand{\ppp}[1]{\left\{#1\right\}}
\newcommand{\norm}[1]{\left\|#1\right\|}
\newcommand{\s}[1]{\mathsf{#1}}

\newcommand{\E}{\mathbb{E}}
\newcommand{\abs}[1]{\left\lvert #1 \right\rvert}

\usepackage{titlesec}

\titlespacing*{\paragraph}
{0pt}   
{1.25ex plus 1ex minus .2ex}  
{1em}   

\makeatletter
\def\thanks#1{\protected@xdef\@thanks{\@thanks
        \protect\footnotetext{#1}}}
\makeatother

\definecolor{morpink}{RGB}{214, 72, 145}

\title{Inhomogeneous Submatrix Detection}
\author{Mor~Oren-Loberman~~~~Dvir Jerbi~~~~Tamir Bendory~~~~Wasim Huleihel\thanks{All authors are with the Department of Electrical and Computer Engineering-Systems at Tel Aviv University, {T}el {A}viv 6997801, Israel (e-mails:  \texttt{orenmor@mail.tau.ac.il, dvirjerbi@mail.tau.ac.il, bendory@tauex.tau.ac.il, wasimh@tauex.tau.ac.il}). This work is supported by the ISRAEL SCIENCE FOUNDATION (grant No. 1734/21). T.B. is supported in part by BSF under Grant 2020159, in part by NSF-BSF under Grant 2024791, in part by ISF under Grant 1924/21, and in part by a grant from The Center for AI and Data Science at Tel Aviv University (TAD).}}

\begin{document}
\maketitle
\sloppy
\begin{abstract}
 In this paper, we study the problem of detecting multiple hidden submatrices in a large Gaussian random matrix when the planted signal is inhomogeneous across entries. Under the null hypothesis, the observed matrix has independent and identically distributed standard normal entries. Under the alternative, there exist several planted submatrices whose entries deviate from the background in one of two ways: in the mean-shift model, planted entries (templates) have nonzero and possibly varying means; in the variance-shift model, planted entries have inflated and possibly varying variances. We consider two placement regimes for the planted submatrices. In the first, the row and column index sets are arbitrary. Motivated by scientific applications, in the second regime the row and column indices are restricted to be consecutive. For both alternatives and both placement regimes, we analyze the statistical limits of detection by proving information-theoretic lower bounds and by designing algorithms that match these bounds up to logarithmic factors, for a wide family of templates. 
\end{abstract}

\allowdisplaybreaks
\section{Introduction}

This paper investigates the problem of detecting hidden submatrices embedded in a large Gaussian random matrix. Under the null hypothesis, the observed $n \times n$ matrix consists of independent and identically distributed standard normal entries. Under the alternative hypothesis, there exist $m$ disjoint submatrices of size $k \times k$ whose entries follow Gaussian distributions with possibly non-uniform means or variances. The objective is to construct a test, equivalently an algorithm, that reliably distinguishes between these two hypotheses.

We begin by considering two models for the placement of the planted submatrices. In the first model, the planted blocks are disjoint and their row and column supports may be arbitrary subsets of indices. The detection and recovery versions of this formulation correspond to the well-studied problems of \emph{submatrix detection} and \emph{submatrix localization}, which have attracted considerable attention in recent years; see, for example, \cite{shabalin2009finding,kolar2011minimax,balakrishnan2011statistical,butucea2013detection,arias2014community,hajek2015computational,montanari2015limitation,verzelen2015community,ma2015computational,XingNobel,Arias10,Bhamidi17,chen2016statistical,cai2015computational,brennan18a,brennan19,9917525,rotenberg2024planted,elimelech2025detecting,9917525} and the references therein.

In the canonical setting of a single planted block, one seeks to determine whether an $n \times n$ matrix sampled from a distribution $\mathcal{Q}$ contains a hidden $k \times k$ submatrix whose entries are drawn from a different distribution $\mathcal{P}$. When both $\mathcal{P}$ and $\mathcal{Q}$ are Gaussian, the statistical limits of detection, optimal testing procedures, and computational lower bounds have been characterized in detail in \cite{butucea2013detection,montanari2015limitation,shabalin2009finding,kolar2011minimax,balakrishnan2011statistical,ma2015computational,brennan19}. If instead $\mathcal{P}$ and $\mathcal{Q}$ are Bernoulli distributions, the problem reduces to the planted dense subgraph model, which has also been extensively investigated; see, for example, \cite{butucea2013detection,arias2014community,verzelen2015community,hajek2015computational,brennan18a}. A central insight emerging from this line of work, both in the Gaussian and Bernoulli settings, is the existence of a statistical computational gap: the minimal signal size $k$ required for information-theoretic detectability can be strictly smaller than the minimal size for which detection is achievable by polynomial-time algorithms. The recovery problem has likewise been studied extensively, across different distributional assumptions and in both single and multiple block regimes; see \cite{chen2016statistical,montanari2015finding,candogan2018finding,hajek2016achieving,hajek2016information,cai2015computational,brennan18a}. These works characterize when the support of the planted submatrix can be accurately identified and analyze both information-theoretic and computational thresholds.

The arbitrary placement model described above is particularly natural in applications such as biclustering. Biclustering methods aim to uncover structured submatrices, often interpreted as hidden clusters, within large data tables of samples versus variables. Such techniques arise in a variety of domains, including community detection in social networks and the analysis of gene expression data from microarrays. Nevertheless, in certain scientific and engineering contexts, assuming arbitrary index sets may be unrealistic. Motivated by such considerations, we also study a structured placement model in which each planted block is supported on consecutive rows and consecutive columns. A prominent example arises in single-particle cryo-electron microscopy, a leading experimental technique for determining the three-dimensional structures of macromolecules, such as proteins \cite{bai2015cryo,lyumkis2019challenges}. Early stages of the cryo-electron microscopy computational pipeline require locating numerous particle images, which are noisy two-dimensional projections of randomly oriented molecular copies, within a large noisy micrograph \cite{singer2018mathematics,bendory2020single}. This step, known as particle picking, is algorithmically challenging. While various heuristic and learning-based approaches have been proposed, including \cite{wang2016deeppicker,heimowitz2018apple,bepler2019positive,eldar2020klt,eldar2024object}, a systematic investigation of the fundamental statistical and computational limits of this detection problem remains largely unexplored.

Our work is most closely related to \cite{dadon2024detection}, which analyzes detection and recovery of multiple homogeneous Gaussian submatrices under both arbitrary and consecutive placements and establishes sharp information-theoretic and computational thresholds. In that setting, each planted block is homogeneous, meaning that its entries share a common distribution across the entire block. In contrast, the present work develops a more general framework that allows structured heterogeneity within each planted submatrix. We introduce a finite-template model in which every block is assigned one template from a fixed finite collection, and the distribution of each entry depends on its relative coordinate within the block and on the chosen template. The classical homogeneous model is recovered as a special case when all templates coincide and are constant across coordinates. This generalization is motivated by the observation that, in many realistic scenarios, signals are not spatially uniform. Instead, they may exhibit gradients, anisotropies, or other structured patterns that cannot be captured by a single mean or variance parameter. In particular, allowing coordinate-dependent structure fundamentally changes both the statistical and analytical landscape of the problem. The detectability threshold is no longer governed by a single scalar shift, and the interaction between heterogeneous templates and random block overlaps must be handled explicitly. Addressing these challenges requires new probabilistic tools and a substantially more delicate second-moment Bayes risk analysis that captures how coordinate-dependent signals interact through random block overlaps; phenomena that are absent in the homogeneous setting. We now formalize this framework in the Gaussian setting and summarize our main results.

We focus on two Gaussian variants of this framework: mean-shift templates and variance-shift templates. In the mean-shift model, each template specifies a coordinate-wise mean profile over the $k \times k$ block, while the noise variance remains fixed. In the variance-shift model, the mean remains zero and the template assigns coordinate-wise variances. For both variants, we analyze arbitrary, non-consecutive placements as well as consecutive placements in which each block occupies a Cartesian product of row and column intervals of length $k$. These two placement regimes differ in their combinatorial complexity and in the distribution of overlaps between candidate blocks, leading to distinct detection thresholds. In the consecutive setting, we additionally study a circular variant in which indices are taken modulo $n$. This restores translation invariance and facilitates the lower-bound analysis without altering the essential statistical behavior.

Our main results establish information-theoretic lower bounds for detection in the finite-template model via a second-moment analysis of the likelihood ratio under the null hypothesis. The resulting threshold is characterized by a scalar quantity determined by the entrywise $\chi^2$-divergences associated with the templates and by the overlap distribution induced by the placement family. Under a mild regularity condition that excludes highly concentrated templates, this quantity reduces to a natural signal energy parameter. On the algorithmic side, we study several simple and computationally efficient test statistics, including global linear and quadratic statistics as well as template-matched scan procedures. These tests succeed in complementary parameter regimes. Under the same regularity condition, we identify parameter ranges in which these procedures match the information-theoretic detection boundary, up to logarithmic factors.

\paragraph{Notation.} For any positive integer $k$, let $[k]\triangleq\{1,2,\dots,k\}$. We write $\abs{\calU}$ for the cardinality of a finite set $\calU$, and $\Ind\ppp{\mathscr{E}}$ for the indicator of an event $\mathscr{E}$. By convention, $\frac{1}{|\calU|}\sum_{u\in \calU} g(u)=1$ when $\calU=\emptyset$. For a matrix $M\in\mathbb{R}^{k\times k}$, we use the standard norms $\norm{M}_1 \triangleq\sum_{i,j\in[k]}\abs{M_{ij}}$, $\norm{M}_F^2 \triangleq\sum_{i,j\in[k]} M_{ij}^2$, and $\norm{M}_{\infty} \triangleq\max_{i,j\in[k]}\abs{M_{ij}}$.

We write $\mathcal{N}(\mu,\sigma^2)$ for a univariate Gaussian distribution with mean $\mu$ and variance $\sigma^2$. We write $\s{X}\indep \s{Y}$ when random variables $\s{X}$ and $\s{Y}$ are independent.
For probability measures $\mathbb{P}\ll \mathbb{Q}$ on a common measurable space, we write $\chi^2(\mathbb{P}\Vert\mathbb{Q})\triangleq\int(\mathrm{d}\mathbb{P}/{\mathrm{d}\mathbb{Q}}-1)^2\mathrm{d}\mathbb{Q}$ for the chi-square divergence. We also write $d_{\s{TV}}(\mathbb{P},\mathbb{Q})\triangleq\frac{1}{2}\int\abs{\mathrm{d}\mathbb{P} -\mathrm{d}\mathbb{Q}}$ and $d_{\s{KL}}(\mathbb{P}\Vert \mathbb{Q})
\triangleq
\int\mathrm{d}\mathbb{P}\log\frac{\mathrm{d}\mathbb{P}}{\mathrm{d}\mathbb{Q}}$ for total variation and Kullback--Leibler divergence, respectively. For a matrix $\Sigma\in\mathbb{R}^{k\times k}$ with entries satisfying $0\le \Sigma_{ij}<1$, we define the blockwise Kullback--Leibler divergence $\s{KL}(\Sigma)\triangleq\frac{1}{2} \sum_{i,j\in[k]} \p{\Sigma_{ij}-\log(1+\Sigma_{ij})}$.

We use standard asymptotic notation: for sequences $\{a_n\}$ and $\{b_n\}$, we write $a_n=O(b_n)$, $a_n=\Omega(b_n)$, $a_n=\Theta(b_n)$, $a_n=o(b_n)$, and $a_n=\omega(b_n)$ with their usual meanings as $n\to\infty$. All asymptotic statements are taken in the limit $n\to\infty$, with $m=m(n)$ and $k=k(n)$ possibly depending on $n$ unless stated otherwise. For $a,b\in\mathbb{R}$, we write $a\vee b\triangleq\max\{a,b\}$ and $a\wedge b\triangleq\min\{a,b\}$. 
Throughout the paper, $C$ denotes a positive constant whose value may change from line to line and which is independent of $n$, $m$, $k$, and all signal parameters; any dependence on fixed model objects, such as the template family, will be stated explicitly when relevant.

\section{Problem Formulation} \label{sec:prob_form}
In this section, we introduce and formulate the statistical models and detection problems, analyzed in this paper. 

\paragraph{Blocks, placements, and indexing.}
Let $m,k,n\in\mathbb{N}$ satisfy $mk\le n$. A \emph{block} (or \emph{submatrix}) is a Cartesian product $\mathcal{B}=\mathcal{S}\times\mathcal{T}\subset[n]\times[n]$ where $\mathcal{S},\mathcal{T}\subset[n]$ and $|\mathcal{S}|=|\mathcal{T}|=k$. Note that the entries of $\mathcal{S}$ and $\mathcal{T}$ need not be consecutive integers, i.e., the block may be scattered rather than contiguous. Let
\begin{align}
    \calB_{k,n}
    \triangleq
    \ppp{\s{S}\times\s{T}:\s{S},\s{T}\subset[n],\ \abs{\s{S}}=\abs{\s{T}}=k}, \label{eq:block_set}
\end{align}
be the family of all $k\times k$ blocks, and define the family of unordered collections of
$m$ pairwise disjoint blocks by
\begin{align}
    \calK_{k,m,n}
    \triangleq
    \ppp{\s{K}\subset\calB_{k,n}:\abs{\s{K}}=m,\;
    \s{B}\cap\s{B}'=\emptyset\ \forall \s{B}\neq\s{B}'\in\s{K}}. \label{eq:non_con_B_K_def}
\end{align}
In the \emph{consecutive} placement model, let
$\mathcal{C}_{k,n}^{\s{con}} \triangleq \ppp{\{i+1,\dots,i+k\}: i=0,\dots,n-k}$ and set
\begin{align}
    \calB_{k,n}^{\s{con}}
    &\triangleq
    \ppp{\s{S}\times\s{T}:\s{S},\s{T}\in\mathcal{C}_{k,n}^{\s{con}}},\label{eq:block_set_con}
    \\
    \calK_{k,m,n}^{\s{con}} 
    &\triangleq
    \ppp{\s{K}\subset\calB_{k,n}^{\s{con}}:\abs{\s{K}}=m,\ 
    \s{B}\cap\s{B}'=\emptyset\ \forall \s{B}\neq\s{B}'\in\s{K}}. 
\end{align}
In addition to the standard consecutive placement model, we also consider a \emph{circular consecutive} variant, in which indices are taken modulo $n$. Define $\mathcal{C}_{k,n}^{\circ} \triangleq\big\{\ppp{i+1,\dots,i+k} \bmod n:\ i=0,\dots,n-1\big\}$, and set
\begin{align}
    \calB_{k,n}^{\circ}     &\triangleq    \ppp{\s{S}\times\s{T}:\s{S},\s{T}\in\mathcal{C}_{k,n}^{\circ}}, \label{eq:block_set_circ}\\
    \calK_{k,m,n}^{\circ} &\triangleq \ppp{\s{K}\subset\calB_{k,n}^{\circ}:\abs{\s{K}}=m,\ \s{B}\cap\s{B}'=\emptyset\ \forall \s{B}\neq\s{B}'\in\s{K}}.   
\end{align}
The circular consecutive model restores translation symmetry across block locations. Note that $|\mathcal{B}_{k,n}^{\s{con}}|=(n-k+1)^2$, whereas $|\mathcal{B}_{k,n}^{\circ}|=n^2$. When both models are considered, results for the circular consecutive placement are stated explicitly, with extensions to the standard consecutive placement specified when applicable. Figure~\ref{fig:ft_model_schematic} illustrates the placement regimes introduces above.

Finally, let $\mathcal{B}=\mathcal{S}\times\mathcal{T}$ be a block with $\mathcal{S}=\{s_1<\cdots<s_k\}$ and $\mathcal{T}=\{t_1<\cdots<t_k\}$. We define the induced coordinate map
\begin{align}
    \varphi_{\s{B}}:\mathcal{S}\times\mathcal{T}\to[k]\times[k],
    \qquad
    \varphi_{\s{B}}(s_u,t_v)=(u,v).
    \label{eq:induced_coord_map}
\end{align}
That is, $\varphi_{\mathcal{B}}$ assigns to each entry of $\mathcal{B}$ its relative row and column indices within the block. We emphasize that the map $\varphi_{\mathcal{B}}$ is deterministic and not part of the generative model; it is an indexing convention induced by the ordering of $\mathcal{S}$ and $\mathcal{T}$. Its role is to align the entries of a block located at arbitrary coordinates in $[n]\times[n]$ with a canonical $k\times k$ template indexed by $[k]\times[k]$. We remark below why this alignment is essential in the inhomogeneous submatrix model. Figure~\ref{fig:ft_mapping_illustration} illustrates the alignment induced by $\varphi_{\s{B}}$.

\begin{figure}[t]
    \centering
    \begin{tikzpicture}[scale=0.35]

        \begin{scope}
            \draw[step=1, gray!20, thin] (0,0) grid (16,16);
            \draw[line width=0.8pt, gray!80] (0,0) rectangle (16,16);

            \foreach \x in {0, 4, 5, 12} {
                \foreach \y in {1, 2, 3, 11} {
                    \fill[myTeal, opacity=0.8, rounded corners=0.5pt] (\x+0.1, \y+0.1) rectangle (\x+0.9, \y+0.9);
                }
            }

            \foreach \x in {2, 7, 10, 11} {
                \foreach \y in {5, 7, 8, 14} {
                    \fill[myCoral, opacity=0.8, rounded corners=0.5pt] (\x+0.1, \y+0.1) rectangle (\x+0.9, \y+0.9);
                }
            }

            \node[below=0.2cm] at (8,0) { $\mathcal{K}_{k,m,n}$};
        \end{scope}

        \begin{scope}[xshift=22cm]
            \draw[step=1, gray!20, thin] (0,0) grid (16,16);
            \draw[line width=0.8pt, gray!80] (0,0) rectangle (16,16);

            \fill[myTeal, opacity=0.8, rounded corners=1pt] (1.1, 1.1) rectangle (4.9, 4.9);

            \fill[myCoral, opacity=0.8, rounded corners=1pt] (10.1, 10.1) rectangle (13.9, 13.9);

            \node[below=0.2cm] at (8,0) { $\mathcal{K}_{k,m,n}^{\s{con}}$};
        \end{scope}

    \end{tikzpicture}
    \caption{Schematic illustration of the placement families (shown for $n=16$, $k=4$). In the non-consecutive model, arbitrary row and column subsets are selected, yielding blocks of the form $\s{S}\times\s{T}$. In the consecutive model, the row and column sets are intervals of length $k$.}
    \label{fig:ft_model_schematic}
\end{figure}
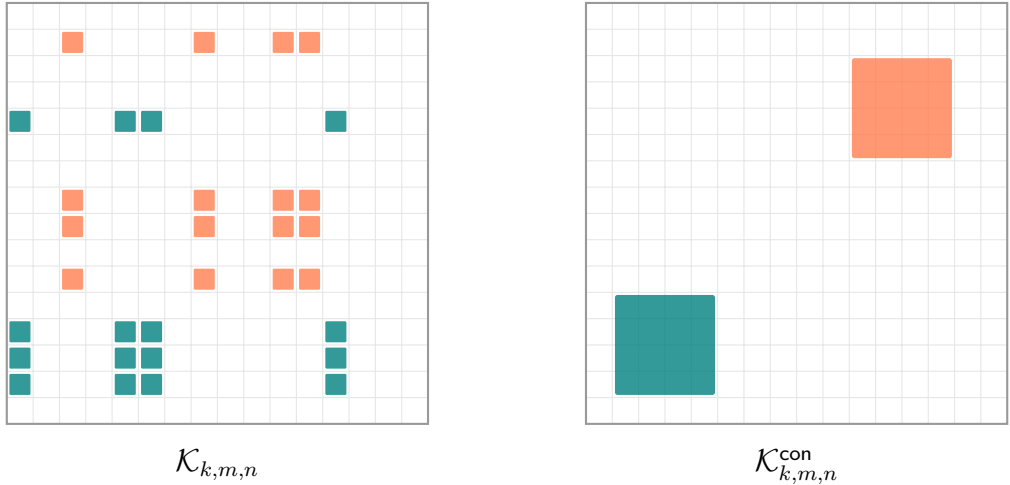

\begin{figure}[t]
    \centering
    \begin{tikzpicture}[scale=0.38]

        \begin{scope}
            \draw[step=1, gray!20, thin] (0,0) grid (16,16);
            \draw[line width=0.8pt, gray!80] (0,0) rectangle (16,16);

            \foreach \x in {2, 3, 4, 5} {
                \foreach \y in {8, 9, 10, 11} {
                    \pgfmathsetmacro{\isTarget}{ (\x == 4 && \y == 10) ? 1 : 0 }
                    \ifnum\isTarget=1
                        \fill[myTeal, opacity=1, rounded corners=0.5pt] (\x+0.1, \y+0.1) rectangle (\x+0.9, \y+0.9);
                        \node (tealSource) at (\x+0.5, \y+0.5) {};
                    \else
                        \fill[myTeal, opacity=0.2, rounded corners=0.5pt] (\x+0.1, \y+0.1) rectangle (\x+0.9, \y+0.9);
                    \fi
                }
            }

            \foreach \x [count=\v] in {7, 8, 12, 15} {
                \node[below, myCoral, font=\scriptsize] at (\x+0.5, 0) {$t_{\v}$};
                \foreach \y [count=\u] in {14, 3, 1, 0} {
                    \pgfmathsetmacro{\isTarget}{ (\u == 2 && \v == 3) ? 1 : 0 }
                    \ifnum\isTarget=1
                        \fill[myCoral, opacity=1, rounded corners=0.5pt] (\x+0.1, \y+0.1) rectangle (\x+0.9, \y+0.9);
                        \node (coralSource) at (\x+0.5, \y+0.5) {};
                    \else
                        \fill[myCoral, opacity=0.2, rounded corners=0.5pt] (\x+0.1, \y+0.1) rectangle (\x+0.9, \y+0.9);
                    \fi
                }
            }
            \foreach \y [count=\u] in {14, 3, 1, 0} {
                \node[left, myCoral, font=\scriptsize] at (0, \y+0.5) {$s_{\u}$};
            }
        \end{scope}

        
        \begin{scope}[xshift=25cm, yshift=10cm]
            \draw[step=1, gray!20, thin] (0,0) grid (4,4);
            \draw[line width=0.8pt, gray!80] (0,0) rectangle (4,4);
            \foreach \tx in {0,...,3} \foreach \ty in {0,...,3} \fill[myTeal, opacity=0.1] (\tx+0.1, \ty+0.1) rectangle (\tx+0.9, \ty+0.9);
            \fill[myTeal, opacity=1, rounded corners=0.5pt] (2.1, 2.1) rectangle (2.9, 2.9);
            \node (tealDest) at (2.5, 2.5) {};
            \node[right=0.2cm, myTeal] at (4,2) {$\mathbf{M_1}$};
        \end{scope}

        \begin{scope}[xshift=25cm, yshift=1cm]
            \draw[step=1, gray!20, thin] (0,0) grid (4,4);
            \draw[line width=0.8pt, gray!80] (0,0) rectangle (4,4);
            \foreach \tx in {0,...,3} \foreach \ty in {0,...,3} \fill[myCoral, opacity=0.1] (\tx+0.1, \ty+0.1) rectangle (\tx+0.9, \ty+0.9);
            \fill[myCoral, opacity=1, rounded corners=0.5pt] (2.1, 2.1) rectangle (2.9, 2.9);
            \node (coralDest) at (2.5, 2.5) {};
            \node[right=0.2cm, myCoral] at (4,2) {$\mathbf{M_2}$};
        \end{scope}

        \draw[->, >=Stealth, thick, dashed, myTeal, bend left=20] 
            (tealSource) to node[midway, xshift=1.7cm, yshift=0.33cm, font=\small, text=myTeal] {$\varphi_{B_1}(s_2, t_3) = (2, 3)$} (tealDest);
            
        \draw[->, >=Stealth, thick, dashed, myCoral, bend right=20] 
            (coralSource) to node[midway, xshift=0.2cm, yshift=-0.3cm, font=\small, text=myCoral] {$\varphi_{B_2}(s_2, t_3) = (2, 3)$} (coralDest);

    \end{tikzpicture}
    \caption{Illustration of the coordinate map $\varphi_{\s{B}}$ in \eqref{eq:induced_coord_map}. For a block $\s{B}=\s{S}\times\s{T}$, the map $\varphi_{\s{B}}(i,j)=(u,v)$ records the relative row and column indices of $(i,j)$ within $\s{B}$, thereby aligning entries of $\s{B}$ with template coordinates. The figure shows both a consecutive block $\s{B}_1$ and a non-consecutive block $\s{B}_2$ mapped to their respective templates.}
    \label{fig:ft_mapping_illustration}
\end{figure}
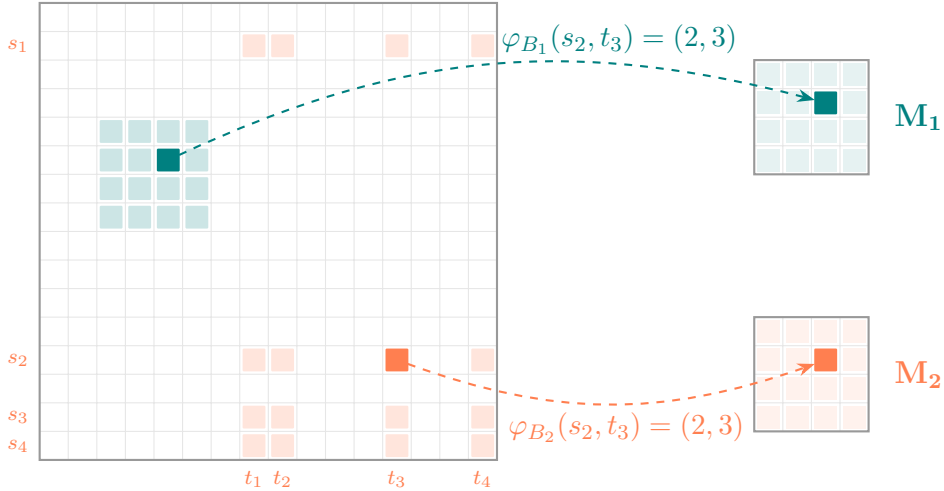

\paragraph{Finite-template submatrix detection model.}

Throughout, we focus on two Gaussian settings: submatrix detection under a mean shift and under a variance shift. For notational convenience, we write $\calQ \triangleq \calN(0,1)$ for the standard normal distribution, and $\calQ^{\otimes n\times n}$ denotes the corresponding i.i.d. product measure on $\mathbb{R}^{n\times n}$. 
Furthermore, we consider a finite family of templates indexed by $\ell\in[m]$. For each $\ell$, let
$\{\calP_{\ell,u}\}_{u\in[k]\times[k]}$ be a collection of probability distributions on $\R$.
The detection problem is to test
\begin{align}
    \calH_0:\ \s{X}\sim\calQ^{\otimes n\times n}
    \qquad \text{vs.} \qquad
    \calH_1:\ \s{X}\sim\calD(n,k,m,\{\calP_{\ell,u}\},\calQ),
\end{align}
where $\calD(n,k,m,\{\calP_{\ell,u}\},\calQ)$ is defined as follows. Under $\calH_1$, an unordered collection $\s{K}$ of $m$ pairwise disjoint blocks is drawn uniformly at random from a specified placement family: $\calK_{k,m,n}$ in the non-consecutive case, $\calK_{k,m,n}^{\s{con}}$ in the standard consecutive case, or $\calK_{k,m,n}^{\circ}$ in the circular consecutive case.
Conditional on $\s{K}$, a bijection $\beta:\s{K}\to[m]$ is drawn uniformly at random from all such bijections. Given $(\s{K},\beta)$, the entries of $\s{X}$ are independent and satisfy
\begin{align}
    \s{X}_{ij}\sim
    \begin{cases}
    \calP_{\beta(\s{B}),\,\varphi_{\s{B}}(i,j)}, & (i,j)\in\s{B}\text{ for some }\s{B}\in\s{K},\\
    \calQ, & \text{otherwise}.
    \end{cases}
\label{eq:generic_entries_law}
\end{align}

We study two Gaussian specializations of the finite-template model.
\begin{itemize}
    \item \textbf{Mean-shift model.} Let $\calM=\{M_\ell\}_{\ell=1}^m$ with $M_\ell\in\R^{k\times k}$.
    For $(u,v)\in[k]\times[k]$, define
    \begin{align}
        \calP_{\ell,(u,v)}=\calN\p{(M_\ell)_{uv},1}.
    \end{align}
    \item \textbf{Variance-shift model.} Let $\calS=\{\Sigma_\ell\}_{\ell=1}^m$ with $\Sigma_\ell\in\R_+^{k\times k}$
    satisfying $\max_{u,v}(\Sigma_\ell)_{uv}\le\vartheta_0$ for some fixed $\vartheta_0\in[0,1)$ that is independent of $n$.
    Define
    \begin{align}
        \calP_{\ell,(u,v)}=\calN\p{0,1+(\Sigma_\ell)_{uv}}.
    \end{align}
\end{itemize}
Throughout the paper, we denote by $\P_{\calH_0}$ the null distribution $\calQ^{\otimes n\times n}$ and by $\P_{\calH_1}$ the mixture alternative induced by the finite-template model under the placement family specified in each statement. When necessary, we write $\P_{\calH_1}^{\s{con}}$ and $\P_{\calH_1}^{\circ}$ to distinguish between the standard and circular consecutive placement models.

Finally, we would like to remark why the alignment in \eqref{eq:induced_coord_map} is essential in the inhomogeneous submatrix model. In contrast to the classical homogeneous setting, where all entries of the planted submatrix share the same distribution (e.g., Gaussians with the same mean as in \cite{ma2015computational,dadon2024detection}), here the distributions may vary across positions within the block. The signal is therefore specified in local block coordinates, say by a template matrix $M=(M_{u,v})_{u,v\in[k]}$. The map $\varphi_{\mathcal{B}}$ allows us to embed this fixed template into any candidate block $\mathcal{B}$ by setting, for $(i,j)\in\mathcal{B}$, $\mathbb{E}[\s{X}_{i,j}] = M_{\varphi_{\mathcal{B}}(i,j)}$. Thus, $\varphi_{\mathcal{B}}$ provides a canonical identification between global matrix coordinates and local block coordinates.

\paragraph{Goal.}
Given an observation $\s{X}$, a detection algorithm $\mathcal{A}_n$ outputs a decision in $\{0,1\}$, corresponding to the null and alternative hypotheses.
The risk of $\mathcal{A}_n$ is defined as the sum of its Type~I and Type~II error probabilities,
\begin{equation}
    \mathsf{R}(\mathcal{A}_n)
    \triangleq
    \P_{\calH_0}\p{\mathcal{A}_n(\s{X})=1}
    +
    \P_{\calH_1}\p{\mathcal{A}_n(\s{X})=0},
\end{equation}
where $\P_{\calH_0}$ denotes the distribution of $\s{X}$ under the null hypothesis and $\P_{\calH_1}$ denotes the marginal distribution of $\s{X}$ under the alternative. In particular, $\P_{\calH_1}$ is the mixture distribution obtained by averaging over the random choice of the planted block collection $\s{K}$ and, conditional on $\s{K}$, over the random labeling $\beta:\s{K}\to[m]$ specified by the model.
Equivalently,
\begin{align}
    \P_{\calH_1}(\cdot)
    =
    \E_{\s{K},\beta}\pp{\P_{\calH_1\vert\s{K},\beta}(\cdot)}.
\end{align}
We say that $\mathcal{A}_n$ solves the detection problem if $\mathsf{R}(\mathcal{A}_n)\to 0$ as $n\to\infty$. The procedures considered in this paper are either unrestricted (and potentially computationally expensive) or restricted to run in polynomial time.

\section{Main Results}
In this section, we present our main results on submatrix detection under the finite-template model introduced in Section~\ref{sec:prob_form}. We identify parameter regimes in which detection is possible and those in which it is information-theoretically impossible. We also investigate detection under polynomial-time computational constraints.

\subsection{Upper bounds}

\subsubsection{Mean-shift model} We begin by establishing achievable detection guarantees for the finite-template mean-shift model. To this end, we introduce the proposed detection algorithms and then state their performance guarantees. 

\paragraph{Global test.} We first consider a global test based on aggregating all matrix entries. Define
\begin{align}
    \s{T}_{\s{sum}}(\s{X})
    \triangleq
    \s{sign}(\mu_{\s{det}})\sum_{i,j\in[n]} \s{X}_{ij},
    \qquad
    \mu_{\s{det}}
    \triangleq
    \sum_{\ell=1}^m \sum_{u,v\in[k]} (M_\ell)_{uv},
    \label{eq:Sum_stat}
\end{align}
where $\mu_{\s{det}}$ is the total planted mean mass contributed by all templates. Note that under the model, each template appears exactly once among the planted blocks by construction, so the total expected signal contribution equals $\mu_{\s{det}}$, independently of the block locations. 
Throughout, we assume $\mu_{\s{det}}\neq 0$. When $\mu_{\s{det}}=0$, the global sum statistic carries no signal, and we rely exclusively on scan-based procedures. Then, we define the \textit{global sum test} as
\begin{align}
    \calA_{\s{sum}}(\s{X})
    \triangleq
    \Ind\ppp{\s{T}_{\s{sum}}(\s{X})\ge \tau_{\s{sum}}}.
    \label{eq:sum_algo}
\end{align}
where $\tau_{\s{sum}}\triangleq\frac{|\mu_{\s{det}}|}{2}$. We note that $\s{T}_{\s{sum}}$ can be computed in linear time; thus, the sum test is computationally efficient.
\paragraph{Scan test.}
Let $M\in\R^{k\times k}$ be a fixed template matrix. For a given family of blocks $\calB$ (either $\calB_{k,n}$, $\calB_{k,n}^{\s{con}}$ or $\calB_{k,n}^{\circ}$), define the scan statistic
\begin{align}
    \s{T}^{\mu}_{\s{scan}}(\s{X};\calB,M)
    \triangleq
    \max_{\s{B}\in\calB}
    \sum_{(i,j)\in\s{B}}
    M_{\varphi_{\s{B}}(i,j)}\,\s{X}_{ij},
    \label{eq:Scan_stat}
\end{align}
where $\varphi_{\s{B}}$ is the induced coordinate map defined in \eqref{eq:induced_coord_map}. Note that \eqref{eq:Scan_stat} ``scans" w.r.t. a single template $M$ only. Intuitively, under the alternative hypothesis, each planted block is associated with a distinct template from the finite family $\calM=\{M_\ell\}_{\ell=1}^m$. For a fixed block location $\s{B}$, the expected value of the scan statistic in \eqref{eq:Scan_stat} equals $\|M_\ell\|_F^2$, when the planted template is $M_\ell$. Consequently, among the templates in $\calM$, the one with the largest Frobenius norm yields the largest expected shift for this statistic. Since detection requires only that at least one planted block produces a statistically significant excursion, it suffices to scan with the template achieving the maximal Frobenius norm. We therefore define
\begin{align}
    \ell_{\max}\in\arg\max_{\ell\in[m]} \norm{M_\ell}_F^2,
    \quad
    M_{\max}\triangleq M_{\ell_{\max}}.\label{eq:M_max}
\end{align}
Accordingly, the \textit{template-aware scan test} over a family of blocks $\calB$ is defined as
\begin{align}
    \calA^{\mu}_{\s{scan, max}}(\s{X};\calB)
    \triangleq
    \Ind\ppp{
        \s{T}^{\mu}_{\s{scan}}(\s{X};\calB,M_{\max})
        \ge \tau^{\mu}_{\s{scan}}(\calB)},
    \label{eq:scan_algo}
\end{align}
where
\begin{align}
    \tau^{\mu}_{\s{scan}}(\calB)
    \triangleq
    \begin{cases}
    \sqrt{(4+\delta)\,\norm{M_{\max}}_F^2\,k\log\dfrac{e n}{k}},
    & \calB=\calB_{k,n},\\[0.8em]
    \sqrt{(4+\delta)\,\norm{M_{\max}}_F^2\,\log n},
    & \calB\in\{\calB_{k,n}^{\s{con}},\,\calB_{k,n}^{\circ}\},
\end{cases}
    \label{eq:mean_thresholds}
\end{align}
for a fixed constant $\delta>0$. Note that the global sum test and the scan test for the standard and circular consecutive placement families are computationally efficient (running in polynomial time), whereas the scan test for the non-consecutive placement family is computationally expensive (having exponential time complexity). More related discussions are provided in Remark~\ref{rmk:consecutive_remark}. 

We are now in a position to state our main results.

\begin{theorem}[Mean-shift upper bounds]
\label{thrm:Detect_upper}
Consider the finite-template mean-shift model introduced in Section~\ref{sec:prob_form}. Let $M_{\max}$ be defined as in \eqref{eq:M_max}. The following statements hold.

\begin{enumerate}
    \item \label{item:mean_upper_sum} If
    \begin{align}
        |\mu_{\s{det}}| = \omega(n), \label{eq:sum_test_cond}
    \end{align}
    then the global sum test $\calA_{\s{sum}}(\s{X})$ in \eqref{eq:sum_algo} satisfies $\s{R}(\calA_{\s{sum}})=o(1)$ under the non-consecutive placement regime and under both consecutive placement regimes.

    \item \label{item:Upper_mean_scan_non_con}
    If
    \begin{align}
        \|M_{\max}\|_F^2
        =
        \omega\p{k \log{\frac{n}{k}}},
        \label{eq:IMSD_upper_bound}
    \end{align}
    then the template-aware scan test $\calA^{\mu}_{\s{scan, max}}(\s{X};\calB_{k,n})$ in     \eqref{eq:scan_algo} satisfies $\s{R}\p{\calA^{\mu}_{\s{scan, max}}}=o(1)$.

    \item \label{item:Upper_mean_scan_con}
    If
    \begin{align}
        \|M_{\max}\|_F^2
        =
        \omega(\log n),
    \end{align}
    then the template-aware scan test $\calA^{\mu}_{\s{scan, max}}(\s{X};\calB)$ in
    \eqref{eq:scan_algo} satisfies $\s{R}\p{\calA^{\mu}_{\s{scan, max}}}=o(1)$, under the standard consecutive placement family $\calB_{k,n}^{\s{con}}$; the same bound holds under the circular consecutive placement family $\calB_{k,n}^{\circ}$.
\end{enumerate}
\end{theorem}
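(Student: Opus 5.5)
Each of the three parts reduces to a Gaussian tail bound for the null (Type~I) error together with a Chebyshev or Gaussian bound for the alternative (Type~II) error, using the planted block itself as the witness.

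\textbf{Part 1 (global sum).} Under $\calH_0$, $\s{T}_{\s{sum}}\sim\calN(0,n^2)$. Hence
\begin{align}
\P_{\calH_0}\p{\s{T}_{\s{sum}}\ge|\mu_{\s{det}}|/2}\le \exp\p{-\mu_{\s{det}}^2/(8n^2)}=o(1),
\end{align}
since $|\mu_{\s{det}}|=\omega(n)$. Under $\calH_1$, condition on $(\s{K},\beta)$. The entries are independent with unit variance, so $\s{T}_{\s{sum}}\sim\calN(|\mu_{\s{det}}|,n^2)$ whatever the placement. This holds because each template appears exactly once and the blocks are disjoint. Therefore
\begin{align}
\P\p{\s{T}_{\s{sum}}<|\mu_{\s{det}}|/2}\le\exp\p{-\mu_{\s{det}}^2/(8n^2)}=o(1),
\end{align}
uniformly in $(\s{K},\beta)$. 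Averaging over $(\s{K},\beta)$ gives the claim for all three placement families.

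\textbf{Part 2 (non-consecutive scan).} For a fixed $\s{B}$, under $\calH_0$ the statistic $Z_{\s{B}}=\sum_{(i,j)\in\s{B}}(M_{\max})_{\varphi_{\s{B}}(i,j)}\s{X}_{ij}$ is $\calN(0,\norm{M_{\max}}_F^2)$. We have $|\calB_{k,n}|=\binom nk^2\le (en/k)^{2k}$. A union bound then gives
\begin{align}
\P_{\calH_0}\p{\max_{\s{B}} Z_{\s{B}}\ge\tau}\le \exp\p{2k\log\tfrac{en}{k}-\tfrac{(4+\delta)k\log(en/k)}{2}}=\exp\p{-\tfrac{\delta}{2}k\log\tfrac{en}{k}}\to0.
\end{align}
Here $k\log(en/k)\ge\log(en/k)$, which tends to infinity whenever $k=o(n)$. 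If instead $k=\Theta(n)$, the bound is still $\le e^{-\delta k/2}\to 0$ as long as $k\to\infty$. A bounded $k$ would need separate handling, since then $\log(en/k)$ still diverges as $n\to\infty$; so in either case the exponent diverges.

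For the Type~II error, let $\s{B}^\star$ be the planted block with $\beta(\s{B}^\star)=\ell_{\max}$. Then $\s{T}^{\mu}_{\s{scan}}\ge Z_{\s{B}^\star}\sim\calN(\norm{M_{\max}}_F^2,\norm{M_{\max}}_F^2)$. Since $\norm{M_{\max}}_F^2=\omega(k\log(n/k))$, we get $\tau=o(\norm{M_{\max}}_F^2)$. So
\begin{align}
\P(Z_{\s{B}^\star}<\tau)\le\exp\p{-(\norm{M_{\max}}_F^2-\tau)^2/(2\norm{M_{\max}}_F^2)}=\exp\p{-(1-o(1))\norm{M_{\max}}_F^2/2}\to0.
\end{align}
One subtle point needs a line of justification: $\log(en/k)$ versus $\log(n/k)$. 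When $k/n$ is bounded away from zero, $\log(n/k)$ may be $O(1)$. The hypothesis $\omega(k\log(n/k))$ must then still dominate $k\log(en/k)\asymp k$. I would handle this by reading $\log(n/k)$ as $\log(en/k)$ or assuming $k\le n/2$ (which holds when $m\ge2$); otherwise add the observation that $\log(en/k)\le 1+\log(n/k)$, which requires $\norm{M_{\max}}_F^2=\omega(k)$. This is the only step I expect to require care.

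\textbf{Part 3 (consecutive and circular scans).} The argument is identical, now with $|\calB|\le n^2$. The null bound becomes
\begin{align}
n^2\exp\p{-(4+\delta)\log n/2}=n^{-\delta/2}\to0.
\end{align}
The alternative bound is unchanged, because $\tau=\sqrt{(4+\delta)\norm{M_{\max}}_F^2\log n}=o(\norm{M_{\max}}_F^2)$ when $\norm{M_{\max}}_F^2=\omega(\log n)$. Every planted block lies in the corresponding family, and $\varphi_{\s{B}}$ aligns the template exactly on the planted block. For circular blocks, $\varphi_{\s{B}}$ orders indices increasingly, and the generative model uses the same map, so the alignment still matches.
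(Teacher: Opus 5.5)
Your proposal is correct and follows the paper's proof essentially step for step. For the sum test it compares the Gaussian tails of $\s{T}_{\s{sum}}\sim\calN(0,n^2)$ and $\calN(|\mu_{\s{det}}|,n^2)$; for the scans it takes a union bound over $\calB$ for Type~I and uses the planted block carrying $M_{\max}$, with statistic $\calN(\norm{M_{\max}}_F^2,\norm{M_{\max}}_F^2)$, as the Type~II witness. Your caveat about $\log(n/k)$ versus $\log(en/k)$ is a genuine edge case (e.g.\ $m=1$ with $k$ close to $n$) that the paper's ``substitute the cardinality bound'' step passes over silently, and assuming $k\le n/2$ resolves it exactly as you describe.
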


\subsubsection{Variance-shift model}

We now move forward to the variance-shift detection model introduced in Section~\ref{sec:prob_form}. As in the mean-shift case, we consider both global and scan-based procedures. The resulting scan statistic coincides with the blockwise log-likelihood ratio under the Gaussian variance alternatives.   

\paragraph{Global test.}
Define the centered quadratic statistic
\begin{align}
    \s{T}_{\s{quad}}(\s{X})
    \triangleq
    \sum_{i,j\in[n]} \p{\s{X}_{ij}^2 - 1}.
    \label{eq:Sum_stat_var}
\end{align}
We note that under the null hypothesis, $\E_{\calH_0}[\s{T}_{\s{quad}}(\s{X})]=0$. Under the alternative, the mean increases by the total variance mass contributed by the planted blocks, namely, 
\begin{align}
    \nu_{\s{det}}
    \triangleq
    \sum_{\ell=1}^m \sum_{u,v\in[k]} (\Sigma_\ell)_{uv}.
    \label{eq:nu_var}
\end{align}
Accordingly, we define the \textit{global quadratic test} as
\begin{align}
    \calA_{\s{quad}}(\s{X})
    &\triangleq
    \Ind\{\s{T}_{\s{quad}}(\s{X}) \ge \tau_{\s{quad}}\},
    \label{eq:sum_algo_var}
\end{align}
where $\tau_{\s{quad}} \triangleq \frac{\nu_{\s{det}}}{2}$. 

\paragraph{Scan test.}
Let $\Sigma\in\R_+^{k\times k}$ be a fixed variance template satisfying
$\max_{u,v}\Sigma_{uv}\le \vartheta_0$ for a constant $\vartheta_0\in[0,1)$ independent of $n$. For a given family of blocks $\calB$ (either $\calB_{k,n}$, $\calB_{k,n}^{\s{con}}$ or $\calB_{k,n}^{\circ}$), define the scan statistic
\begin{align}
    \s{T}^{\sigma}_{\s{scan}}(\s{X};\calB,\Sigma)
    \triangleq 
    \max_{\s{B}\in\calB}
    \sum_{(i,j)\in\s{B}}
    \frac{1}{2}\p{
        \frac{\Sigma_{\varphi_{\s{B}}(i,j)}}{1+\Sigma_{\varphi_{\s{B}}(i,j)}}
        \s{X}_{ij}^2
        - \log\p{1+\Sigma_{\varphi_{\s{B}}(i,j)}}},
    \label{eq:Scan_stat_HVar}
\end{align}
where $\varphi_{\s{B}}$ is the induced coordinate map defined in \eqref{eq:induced_coord_map}. This statistic coincides with the blockwise log-likelihood ratio for a Gaussian variance-shift template. For such a template $\Sigma$, we further define its associated blockwise Kullback--Leibler divergence
\begin{align}
    \s{KL}(\Sigma)
    \triangleq
    \frac{1}{2} \sum_{u,v\in[k]}
    \pp{
        (\Sigma)_{uv}
        -
        \log\p{1+(\Sigma)_{uv}}}.
    \label{eq:KL_definition}
\end{align} 
Accordingly, we define the corresponding \textit{finite-template scan test} as
\begin{align}
    \calA^{\sigma}_{\s{scan}}(\s{X};\calB)
    &\triangleq
    \Ind\ppp{\max_{\ell \in [m]} \s{T}^{\sigma}_{\s{scan}}(\s{X};\calB,\Sigma_{\ell})
    \ge \tau_{\s{scan}}^{\sigma}(\calB)},
    \label{eq:scan_algo_var}
\end{align}
where
\begin{align}
    \tau_{\s{scan}}^{\sigma}(\calB)
    \triangleq
    \begin{cases}
        (1+\delta)\p{2k\log\frac{e n}{k} + \log m},
        & \calB=\calB_{k,n},\\[0.4em]
        (1+\delta)\p{2\log n + \log m},
        & \calB \in\ppp{\calB_{k,n}^{\s{con}},\calB_{k,n}^{\circ}}.
    \end{cases}
    \label{eq:variance_thresholds}
\end{align}
for a fixed constant $\delta>0$. We have the following result.

\begin{theorem}[Variance-shift upper bounds]
\label{thrm:Detect_upper_variance}
Consider the finite-template variance-shift model introduced in Section~\ref{sec:prob_form}. 
Then the following statements hold.
\begin{enumerate}
    \item If
    \begin{align}
        \nu_{\s{det}} = \omega(n), \label{eq:quad_cond}
    \end{align}
    then the global quadratic test $\calA_{\s{quad}}(\s{X})$ in
    \eqref{eq:sum_algo_var} satisfies $\s{R}(\calA_{\s{quad}})=o(1)$ under the non-consecutive placement regime and under both consecutive placement regimes.
    \item \label{item:variance_scan_non_con}
    If
    \begin{align}
        \max_{\ell} \s{KL}(\Sigma_{\ell})
        =
        \omega\p{k\log\frac{n}{k} + \log m},
        \label{eq:variance_upper_cond}
    \end{align}
    then the finite-template scan test $\calA^{\sigma}_{\s{scan}}(\s{X};\calB_{n,k})$ in \eqref{eq:scan_algo_var} satisfies $\s{R}\p{\calA^{\sigma}_{\s{scan}}}=o(1)$.
    
    \item  \label{item:variance_scan_con}
    If
    \begin{align}
        \max_{\ell} \s{KL}(\Sigma_{\ell}) = \omega(\log n +  \log m),
        \label{eq:con_variance_upper_cond}
    \end{align}
    then the finite-template scan test $\calA^{\sigma}_{\s{scan}}(\s{X};\calB)$ in  \eqref{eq:scan_algo_var} satisfies $\s{R}\p{\calA^{\sigma}_{\s{scan}}}=o(1)$, under the standard consecutive placement family $\calB_{k,n}^{\s{con}}$; the same bound holds under the circular consecutive placement family $\calB_{k,n}^{\circ}$.
\end{enumerate}
\end{theorem}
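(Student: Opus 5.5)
We treat the three items separately, bounding Type~I and Type~II errors for each. Throughout, the variance templates satisfy $(\Sigma_\ell)_{uv}\le\vartheta_0<1$, so all constants may depend on $\vartheta_0$.

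\emph{Global quadratic test.} Under $\calH_0$, $\s{T}_{\s{quad}}$ is a centered sum of $n^2$ i.i.d. $\chi^2_1-1$ variables, so its variance is $2n^2$. Under $\calH_1$, conditional on $(\s{K},\beta)$, the entries are still independent. The mean is $\nu_{\s{det}}$, regardless of placement, and the variance is
\begin{align}
\sum 2(1+\Sigma_{ij})^2\le 2(1+\vartheta_0)^2n^2 .
\end{align}
Chebyshev's inequality with threshold $\nu_{\s{det}}/2$ then bounds both error probabilities by $O(n^2/\nu_{\s{det}}^2)=o(1)$ whenever $\nu_{\s{det}}=\omega(n)$. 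Since the conditional bound is uniform over placements, the same argument covers all three placement regimes.

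\emph{Scan tests, Type~I.} For a fixed block $\s{B}$ and template $\Sigma$, write $L_{\s{B}}$ for the summand in \eqref{eq:Scan_stat_HVar}. This is the log-likelihood ratio $\log \frac{d\P_{\s{B},\Sigma}}{d\calQ}$ on the entries of $\s{B}$, so $\E_{\calH_0}[e^{L_{\s{B}}}]=1$. Markov's inequality gives $\P_{\calH_0}(L_{\s{B}}\ge \tau)\le e^{-\tau}$. A union bound over the $m$ templates and over $\calB$ then gives the null error. In the non-consecutive case $|\calB_{k,n}|=\binom nk^2\le (en/k)^{2k}$, so the null error is at most
\begin{align}
m (en/k)^{2k} e^{-(1+\delta)(2k\log(en/k)+\log m)}\to0 .
\end{align}
In the consecutive and circular cases $|\calB|\le n^2$, and the threshold $(1+\delta)(2\log n+\log m)$ likewise yields $o(1)$. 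This Chernoff-via-likelihood-ratio step is clean, with no sub-exponential tail computations needed.

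\emph{Scan tests, Type~II.} Let $\ell^*$ maximize $\s{KL}(\Sigma_\ell)$, and let $\s{B}^*$ be the planted block carrying template $\ell^*$; such a block exists because $\beta$ is a bijection. The scan maximum is at least $L_{\s{B}^*}$ evaluated with $\Sigma_{\ell^*}$, so it suffices to show $L_{\s{B}^*}\ge\tau$ with high probability. Under the planted law, $\s{X}_{ij}^2=(1+\sigma)Z^2$ with $Z\sim\calN(0,1)$. Each summand then has mean $\frac12(\sigma-\log(1+\sigma))$, so $\E[L_{\s{B}^*}]=\s{KL}(\Sigma_{\ell^*})$. Its variance is $\sum \frac12\sigma^2\le \sum C_{\vartheta_0}(\sigma-\log(1+\sigma))$, using $\sigma-\log(1+\sigma)\ge c_{\vartheta_0}\sigma^2$ on $[0,\vartheta_0]$. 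Hence $\mathrm{Var}\le C\,\s{KL}(\Sigma_{\ell^*})$. Chebyshev's inequality gives
\begin{align}
\P(L_{\s{B}^*}<\s{KL}/2)\le 4C/\s{KL}\to0 .
\end{align}
The hypotheses \eqref{eq:variance_upper_cond} and \eqref{eq:con_variance_upper_cond} guarantee $\s{KL}/2\ge\tau$ eventually, since $\tau=O(k\log(n/k)+\log m)$ (respectively $O(\log n+\log m)$). This uses $\log(en/k)=O(\log(n/k))$, valid because $n/k\ge m\ge1$; if $n=k$, the $\omega$ condition still dominates the constant term.

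The main obstacle is the variance control in the Type~II bound, specifically the comparison $\mathrm{Var}(L_{\s{B}^*})\lesssim\s{KL}$. This is exactly where the boundedness $\vartheta_0<1$ is used, and it makes the argument independent of how inhomogeneous the template is.
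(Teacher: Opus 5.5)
\paragraph{Comparison with the paper.} Your argument is correct in substance. It departs from the paper only in the Type~II bound for the scan test.

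For the quadratic test, both you and the paper use Chebyshev. You replace the paper's exact variance $2n^2+4\nu_{\s{det}}+2\sum_{\ell,u,v}(\Sigma_\ell)_{uv}^2$ with the cruder bound $2(1+\vartheta_0)^2n^2$, which is cleaner and suffices.

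The scan Type~I bound is identical to the paper's: $\E_{\calH_0}[e^{\calL}]=1$, then Markov, then a union bound over $m\abs{\calB}$ pairs.

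For Type~II, the paper computes the moment generating function of $-\sum_{u,v}\sigma_{uv}(\s{Z}_{uv}^2-1)$ and optimizes a Chernoff bound. This gives $\exp\{-(\s{KL}(\Sigma_{\ell^\star})-\tau)^2/\norm{\Sigma_{\ell^\star}}_F^2\}$. You use only the second moment, $\Var(L_{\s{B}^\star})=\frac12\norm{\Sigma_{\ell^\star}}_F^2\le 2(1+\vartheta_0)^2\,\s{KL}(\Sigma_{\ell^\star})$, together with Chebyshev, and get an $O(1/\s{KL})$ error.

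Your route is shorter and avoids the bookkeeping on the admissible range of $\lambda$. The paper's route gives an exponentially small Type~II error, i.e.\ a quantitative rate. The paper's exponent also diverges only because $\norm{\Sigma}_F^2=O(\s{KL}(\Sigma))$ for bounded entries. You make that comparison explicit, whereas the paper's proof records only the reverse inequality $\s{KL}(\Sigma)\le\frac14\norm{\Sigma}_F^2$. The comparison needs only a uniform bound on the entries, not specifically $\vartheta_0<1$.

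\paragraph{A faulty justification, shared with the paper.} One sentence in your write-up is wrong, though the paper's proof has the same defect. The claim $\log(en/k)=O(\log(n/k))$ does not follow from $n/k\ge 1$. It needs $n/k\ge 1+c$ for a constant $c>0$, for example $m\ge 2$ or $k\le n/2$.

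When $m=1$ and $n/k\to 1$, the hypothesis $\omega(k\log(n/k)+\log m)$ can be as weak as $\omega(1)$, while $\tau^{\sigma}_{\s{scan}}(\calB_{k,n})\approx 2(1+\delta)k$. For instance, take $k=n-1$ and a constant template with entries $n^{-3/4}$. Then $\s{KL}\asymp\sqrt{n}$ and the hypothesis holds. But every block's score is about $\frac14\sqrt{n}+O(n^{1/4}\sqrt{\log n})$, so the test fails to reject with probability tending to one.

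Your remark about $n=k$ is also incorrect: there the hypothesis degenerates, while $\tau=2(1+\delta)n$.

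The paper's proof silently makes the same identification $\log\abs{\calB_{k,n}}\asymp k\log(n/k)$, so this is a flaw in the statement rather than in your method. You should state the needed assumption explicitly, or read the hypothesis with $\log(en/k)$ in place of $\log(n/k)$, rather than claim it holds automatically.
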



\begin{rmk}\label{rmk:template_asymmetry}
In the mean-shift setting, the scan statistic is linear in the data, and its expectation under the alternative equals $\|M_\ell\|_F^2$ when the planted template is $M_\ell$. Scanning with a template attaining $\max_{\ell}\|M_\ell\|_F$ therefore maximizes the expected value of this statistic among the planted blocks. This formulation isolates the dependence of the detection threshold on the block family $\calB$. Alternatively, one may scan jointly over block locations and templates, defining
\begin{align}
    \max_{\ell\in[m]}\max_{\s{B}\in\calB} \sum_{(i,j)\in \s{B}} (M_\ell)_{\varphi_{\s{B}}(i,j)} \s{X}_{ij}. \label{eq:finite-template_mean_stat}
\end{align}
By a union bound over $\ell\in[m]$, this procedure achieves vanishing risk whenever
\begin{align}
    \|M_{\max}\|_F^2 = \omega\p{\log|\calB|+\log m},
\end{align}
that is, under the same scaling as in Theorem~\ref{thrm:Detect_upper} with the scan threshold increased by an additive $\log m$ term. 

In contrast, in the variance-shift setting the scan statistic coincides with a blockwise log-likelihood ratio, and the relevant separation is quantified by the Kullback–Leibler divergence. In this case, scanning jointly over the finite template family aligns with the likelihood-ratio structure and is adopted in the stated upper bounds.
\end{rmk}

\begin{rmk} \label{rmk:consecutive_remark}
For the scan-based procedures in Theorems~\ref{thrm:Detect_upper} and~\ref{thrm:Detect_upper_variance}, the detection thresholds are governed by the cardinality of the collection $\calB$ of candidate block locations. In the variance-shift setting, where the scan is taken jointly over block locations and templates, the thresholds additionally depend on the size $m$ of the template family.
In the non-consecutive setting, $\calB_{k,n}$ defined in~\eqref{eq:block_set} satisfies $|\calB_{k,n}|=\binom{n}{k}^2\le (en/k)^{2k}$. Substituting this bound yields thresholds of order $k\log(en/k)$ for the mean-shift scan and $k\log(en/k)+\log m$ for the variance-shift scan. In the consecutive setting, both the standard placement family $\calB_{k,n}^{\s{con}}$ and the circular placement family $\calB_{k,n}^{\circ}$ satisfy $|\calB|=\Theta(n^2)$, yielding logarithmic thresholds of order $\log n$ for the mean-shift scan and $\log n+\log m$ for the variance-shift scan.

From a computational perspective, scanning over $\calB_{k,n}$ requires enumerating $\binom{n}{k}^2$ block locations and is computationally infeasible in general. In contrast, scans over $\calB_{k,n}^{\s{con}}$ and $\calB_{k,n}^{\circ}$ admit polynomial-time implementations via sliding-window or circular convolution techniques. In the variance-shift setting, the finite-template scan incurs an additional factor $m$ in the running time due to the enumeration over templates.
\end{rmk}


\subsection{Lower bounds} \label{sec:results_lowerbound}
We now present our information-theoretic lower bounds. Recall that the minimal (or, optimal) achievable risk for testing $\P_{\calH_0}$ versus $\P_{\calH_1}$ satisfies (see, e.g.,~\cite{tsybakov2004introduction})
\begin{align}
    \inf_{\calA:\R^{n\times n}\to\{0,1\}}
    \ppp{
        \P_{\calH_0}(\calA(\s{X})=1)
        +
        \P_{\calH_1}(\calA(\s{X})=0)}
    =
    1 - d_{\s{TV}}(\P_{\calH_0},\P_{\calH_1}),
\end{align}
and so detection is information-theoretically impossible whenever $d_{\s{TV}}(\P_{\calH_0},\P_{\calH_1}) = o(1)$. Our lower bounds depend on the following quantity
\begin{align}
    \Theta^\star
    \triangleq
    \max_{\ell \in [m]}
    \frac{1}{m^2 k^2}
    \log\p{
        \frac{1}{k^2}
        \sum_{u \in [k] \times [k]}
        \exp\p{
            m^2 k^2 \chi^2(\calP_{\ell,u}\Vert\calQ)}}.
    \label{eq:Theta_star_def}
\end{align}
The following result gives conditions under which $d_{\s{TV}}(\P_{\calH_0},\P_{\calH_1}) = o(1)$, thus precluding successful recovery. 
\begin{theorem}[Information-theoretic lower bounds]
\label{thrm:lower_bound}
Consider the finite-template submatrix detection model introduced in Section~\ref{sec:prob_form}. Let $\delta=\delta_n>0$ be any sequence such that $\delta_n\to 0$, as $n\to\infty$. The following statements hold.
\begin{enumerate}
    \item Consider the non-consecutive placement family $\calK_{k,m,n}$. If
    \begin{align}
        \Theta^\star \le \min\p{\frac{1}{k},\ \frac{n^2\log(1+\delta)}{2m^2k^4}},
        \label{eq:non_consecutive_impossibility}
    \end{align}
    then $d_{\s{TV}}(\P_{\calH_0},\P_{\calH_1})=o(1)$, and detection is information-theoretically impossible.
    \item Consider the circular consecutive placement family $\calK_{k,m,n}^{\circ}$. Assume $k\le \frac{n}{2}$. If
    \begin{align}
        \Theta^\star \le
        \frac{1}{k^2}\log\p{1+\frac{n^2\log(1+\delta)}{4k^2m^2}},
        \label{eq:consecutive_impossibility}
    \end{align}
    then $d_{\s{TV}}(\P_{\calH_0},\P_{\calH_1})=o(1)$, and detection is information-theoretically impossible.
\end{enumerate}
\end{theorem}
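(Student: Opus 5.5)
We follow the standard second-moment route. Since $d_{\s{TV}}(\P_{\calH_0},\P_{\calH_1})\le \frac12\sqrt{\chi^2(\P_{\calH_1}\Vert\P_{\calH_0})}$, it suffices to show $\E_{\calH_0}[L^2]\le 1+\delta_n$ with $L=\mathrm{d}\P_{\calH_1}/\mathrm{d}\P_{\calH_0}$. Write $L=\E_{\s{K},\beta}[L_{\s{K},\beta}]$. Then, by Fubini and independence of entries under the null,
\begin{align}
\E_{\calH_0}[L^2]=\E_{(\s{K},\beta),(\s{K}',\beta')}\prod_{(i,j)}\p{1+\rho_{ij}},
\end{align}
where $(\s{K},\beta),(\s{K}',\beta')$ are independent copies. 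The factor $\rho_{ij}$ is nonzero only on cells lying in a planted block of both copies. There it equals $\int \frac{\mathrm{d}\calP_{\ell,u}\,\mathrm{d}\calP_{\ell',u'}}{\mathrm{d}\calQ}-1$, with $\ell=\beta(\s{B})$, $u=\varphi_{\s{B}}(i,j)$, and $(\ell',u')$ the corresponding data for the second copy. By Cauchy--Schwarz in $L^2(\calQ)$, $1+\rho\le \exp(\rho)$ and $\rho\le \sqrt{\chi^2(\calP_{\ell,u}\Vert\calQ)\chi^2(\calP_{\ell',u'}\Vert\calQ)}\le \frac12(\chi^2_{\ell,u}+\chi^2_{\ell',u'})$. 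Hence
\begin{align}
\E_{\calH_0}[L^2]\le \E\exp\Big(\sum_{(i,j)\in \cup\s{K}\cap\cup\s{K}'}\tfrac12(\chi^2_{\ell,u}+\chi^2_{\ell',u'})\Big).
\end{align}
We then bound this symmetrically, for instance by Cauchy--Schwarz, by $\E\exp(\sum_{\text{overlap}}\chi^2_{\ell,u})$, where only the first copy's template coordinates appear and the overlap set is random through $\s{K}'$.

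Next, we decouple the sum over overlap cells by Jensen/Hölder so that the quantity $\Theta^\star$ appears. The overlap set contains at most $m^2k^2$ cells, since at most $m$ blocks meet at most $m$ blocks, with at most $k^2$ cells per pair. We therefore write $\sum_{\text{overlap}}\chi^2_{\ell,u}$ as an average over the overlapping pairs and cells, and use convexity of $\exp$:
\begin{align}
\exp\Big(\sum_{c}\chi^2_c\Big)=\exp\Big(\tfrac{1}{N}\sum_c N\chi^2_c\Big)\le \tfrac1N\sum_c e^{N\chi^2_c}.
\end{align}
We would aim to reduce to random coordinates $u$ of a block that are approximately uniform on $[k]\times[k]$, given the random relative placement. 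The uniformity comes from the randomness of $\s{K}'$: the cell positions within a block hit by the other copy are roughly exchangeable, exactly uniform in the circular case by translation invariance, and by symmetry of uniform subsets in the non-consecutive case. This yields a bound of the form $\E\big[\exp\big(m^2k^2\,\Theta^\star\cdot \tfrac{|\text{overlap}|}{m^2k^2}\big)\big]$, or more precisely something like $\E[e^{\Theta^\star Z}]$, where $Z$ counts overlapping cells and $\frac{1}{k^2}\sum_u e^{m^2k^2\chi^2}=e^{m^2k^2\Theta^\star}$ controls each term. \emph{This reduction is the main obstacle}: the template coordinates are correlated across cells, and Hölder must be applied with the right exponent ($m^2k^2$) to produce exactly $\Theta^\star$. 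We would first try Hölder over the (at most $m^2$) block pairs combined with Jensen over the $k^2$ cells within each pair.

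Finally, we control the overlap moment generating function. In the non-consecutive case, $Z=\sum_{\s{B},\s{B}'}|\s{S}\cap\s{S}'||\s{T}\cap\s{T}'|$, with hypergeometric intersections stochastically dominated by $\mathrm{Bin}(k,k/n)$ (negative association across the $m$ blocks). With $\Theta^\star\le 1/k$, we get $e^{\Theta^\star st}\le 1+(e^{\Theta^\star k}-1)\frac{st}{k}$, and the bound $\E[st]\le k^4/n^2$ per pair across $m^2$ pairs gives $\E e^{\Theta^\star Z}\le \exp(Cm^2k^4\Theta^\star/n^2)\le 1+\delta$ under \eqref{eq:non_consecutive_impossibility}. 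In the circular case, the overlap of two random cyclic intervals is nonzero with probability at most $(2k/n)^2$ per block pair, and is at most $k^2$ cells. Hence each pair contributes at most $1+\frac{4k^2}{n^2}(e^{k^2\Theta^\star}-1)$, and the product over $m^2$ pairs is at most $\exp(\frac{4k^2m^2}{n^2}(e^{k^2\Theta^\star}-1))\le 1+\delta$ exactly under \eqref{eq:consecutive_impossibility}. Independence across pairs is not exact, so we would use negative association or a Hölder split over pairs, accepting constant losses absorbed by the stated factors.
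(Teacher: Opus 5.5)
You have the paper's skeleton: the second moment of the likelihood ratio, Cauchy--Schwarz domination of each overlap factor by $\exp\{\frac12(\chi^2_{\ell,u}+\chi^2_{\ell',u'})\}$, symmetrization onto one copy's templates, and overlap moment generating functions decoupled by negative association. Your circular computation, $1+\frac{4k^2}{n^2}(e^{k^2\Theta^\star}-1)$ per pair raised to the power $m^2$, is exactly the paper's.

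\textbf{Main gap.} It is the step you flag as ``the main obstacle''. That step is the heart of the proof, you do not supply it, and the tool you suggest for it does not give the non-consecutive condition. The paper argues as follows.
\begin{itemize}
\item It conditions on $\calF=\sigma(\s{K},\{\s{H}_{\s{B},\s{B}'}\})$ and applies conditional H\"older with $q=m^2$ over the block pairs.
\item It then applies Jensen over the $\s{H}_{\s{B},\s{B}'}$ actual overlap cells, with weights $1/\s{H}_{\s{B},\s{B}'}$ rather than $1/k^2$.
\item Exchangeability of the overlapped in-block coordinates then gives $\E[e^{m^2\s{S}_{\s{B},\s{B}'}(\ell)}\mid\calF]\le A_\ell(\s{H}_{\s{B},\s{B}'})$, where $A_\ell(h)=\frac{1}{k^2}\sum_{u}e^{m^2h\chi^2_{\ell,u}}$.
\end{itemize}
The missing idea is log-convex interpolation. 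The map $h\mapsto\log A_\ell(h)$ is a log-sum-exp of linear functions, hence convex, and $A_\ell(0)=1$. Therefore $A_\ell(h)\le A_\ell(k^2)^{h/k^2}$. After averaging over $\ell$ and taking the $1/m^2$ power, this becomes $e^{\Theta^\star h}$: an exponent linear in the overlap size with rate exactly $\Theta^\star$, which is what the binomial computation needs.

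Your proposed Jensen ``over the $k^2$ cells'' (zero-padding to $k^2$ per pair, or to $m^2k^2$ globally) gives only the chord bound $1+\frac{h}{k^2}(A_\ell(k^2)-1)$. Its effective rate is $(e^{m^2k^2\Theta^\star}-1)/(m^2k^2)$. Where the hypothesis allows $\Theta^\star\asymp 1/k$, this rate is exponentially larger than $\Theta^\star$, so it cannot deliver the stated condition.

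\textbf{Circular case.} Here only the crude factor $e^{k^2\Theta^\star}$ on overlapping pairs is used. That factor follows pointwise from zero-padded Jensen, with no uniformity at all. This matters because, conditionally on the overlap size, the overlapped coordinates of a circular block form a rectangle anchored at a corner of $[k]\times[k]$. So the exchangeability you (and the paper) invoke there holds only marginally, not conditionally.

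\textbf{Secondary error.} Your non-consecutive bound uses $e^{\Theta^\star st}\le1+(e^{\Theta^\star k}-1)\frac{st}{k}$. This chord inequality is valid only for $st\le k$, whereas $st$ ranges up to $k^2$. For example, at $\Theta^\star=1/k$ and $st=k^2$ the left side is $e^k$ and the right side is $1+k(e-1)$. The paper avoids this as follows:
\begin{itemize}
\item It conditions on one binomial and uses the exact MGF in the other, $\E[e^{\Theta^\star\s{W}\s{W}'}\mid\s{W}]=(1+\frac{k}{n}(e^{\Theta^\star\s{W}}-1))^k$.
\item The assumption $\Theta^\star\le 1/k$ is then needed only to guarantee $\Theta^\star\s{W}\le 1$.
\item Using $e^x-1\le 2x$ and a second binomial MGF gives $(1+\frac{k}{n}(e^{2k^2\Theta^\star/n}-1))^{km^2}$.
\end{itemize}
This slip can be repaired along those lines. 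The log-convexity step is the ingredient that is genuinely missing.
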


\begin{corollary}[Impossibility for standard consecutive placements]
\label{cor:standard_consecutive}
Consider the detection model in Section~\ref{sec:prob_form}, under the standard consecutive placement family $\calK_{k,m,n}^{\s{con}}$. Assume $mk=o(n)$. If \eqref{eq:consecutive_impossibility} holds, then $d_{\s{TV}}(\P_{\calH_0},\P_{\calH_1})=o(1)$, and detection is information-theoretically impossible.
\end{corollary}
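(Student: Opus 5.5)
We plan to reduce the standard consecutive model to the circular one (part 2 of Theorem~\ref{thrm:lower_bound}) by a conditioning argument. The key observation is that a uniformly random collection from $\calK_{k,m,n}^{\circ}$ is, with high probability when $mk=o(n)$, a collection of blocks none of which wraps around the boundary. Conditional on that event, it is a collection in $\calK_{k,m,n}^{\s{con}}$. The plan is to compare $\P_{\calH_1}^{\s{con}}$ with $\P_{\calH_1}^{\circ}$ in total variation and then use the triangle inequality
\begin{align}
d_{\s{TV}}(\P_{\calH_0},\P_{\calH_1}^{\s{con}})\le d_{\s{TV}}(\P_{\calH_0},\P_{\calH_1}^{\circ})+d_{\s{TV}}(\P_{\calH_1}^{\circ},\P_{\calH_1}^{\s{con}}).
\end{align}
The first term is $o(1)$ by Theorem~\ref{thrm:lower_bound}(2). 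Its hypothesis $k\le n/2$ holds for large $n$, since $mk=o(n)$.

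For the second term, note that both alternatives are mixtures over $(\s{K},\beta)$ with identical conditional laws $\P_{\calH_1\vert\s{K},\beta}$. By the convexity (data-processing) bound, their TV distance is therefore at most the TV distance between the placement priors. Let $\calE$ denote the event that no block of $\s{K}\sim\mathrm{Unif}(\calK_{k,m,n}^{\circ})$ wraps around. Non-wrapping circular blocks are exactly the consecutive blocks, so $\calK_{k,m,n}^{\s{con}}\subset\calK_{k,m,n}^{\circ}$, and the uniform law on $\calK^{\circ}$ conditioned on $\calE$ is the uniform law on $\calK^{\s{con}}$. It follows that the prior TV distance equals $\P(\calE^c)$. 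Equivalently, $\P(\calE)=|\calK^{\s{con}}|/|\calK^{\circ}|$, and we must show this tends to $1$.

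The main obstacle is bounding $\P(\calE^c)$, because blocks in a uniform disjoint collection are not independent. We would bound it by a union bound over the $m$ blocks combined with a symmetry argument. By rotational invariance of $\calK^{\circ}$ (shifting all rows by $r$ and all columns by $c$ modulo $n$ is a bijection of $\calK^{\circ}$), each block's row interval start is marginally uniform on $\mathbb{Z}_n$, and likewise its column start. This requires labeling the blocks, which can be done uniformly at random, or by directly using the averaged count. A given interval wraps iff its start lies in a set of $k-1$ residues. Hence
\begin{align}
\P(\calE^c)\le m\cdot 2\cdot\frac{k-1}{n}=O\!\left(\frac{mk}{n}\right)=o(1).
\end{align}
More carefully, $\E[\#\text{wrapping blocks}]=\sum_{\s{B}\in\calB^{\circ}}\P(\s{B}\in\s{K})\Ind\{\s{B}\text{ wraps}\}$, and rotation invariance makes $\P(\s{B}\in\s{K})=m/n^2$ for every $\s{B}$. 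Summing over the at most $2(k-1)n$ wrapping blocks gives $\E[\#\text{wrapping blocks}]\le 2m(k-1)/n$. Markov's inequality then bounds $\P(\calE^c)$ by this quantity. Combining the two bounds yields $d_{\s{TV}}(\P_{\calH_0},\P_{\calH_1}^{\s{con}})=o(1)$.
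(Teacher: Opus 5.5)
Your proposal is correct and follows essentially the same route as the paper. Both arguments bound $d_{\s{TV}}(\P_{\calH_1}^{\circ},\P_{\calH_1}^{\s{con}})$ by the probability that some circular block wraps, which is at most $2m(k-1)/n$, and then conclude via Theorem~\ref{thrm:lower_bound}(2) and the triangle inequality. The paper phrases the TV step as an explicit coupling rather than your mixture/data-processing bound, and it simply asserts that each block's starts are uniform. Your rotation-invariance argument, giving $\P(\s{B}\in\s{K})=m/n^2$, makes that step rigorous, and so does your check that $k\le n/2$ follows from $mk=o(n)$.
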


\begin{proof}[Proof sketch of Corollary~\ref{cor:standard_consecutive}]
    The circular and standard consecutive placement models differ only through boundary effects. Under the circular model, block locations are translation invariant, whereas under the standard model only $n-k+1$ starting positions are allowed. For a uniformly random block under the circular model, the probability of wrapping around the boundary is $O(k/n)$. By a union bound over the $m$ planted blocks, the probability that at least one block wraps around is $O(mk/n)$. Hence, $d_{\s{TV}}(\P_{\calH_1}^{\circ},\P_{\calH_1}^{\s{con}}) = O(mk/n)$. In particular, if $mk=o(n)$ then $d_{\s{TV}}(\P_{\calH_1}^{\circ},\P_{\calH_1}^{\s{con}})=o(1)$, and the claim follows from Theorem~\ref{thrm:lower_bound}.
\end{proof}

To build intuition, we discuss the first few steps of the proof of Theorem~\ref{thrm:lower_bound}. We begin with the standard inequality $d_{\s{TV}}^2(P,Q) \leq \frac{1}{2}\chi^2(P\Vert Q)$, see, e.g.,~\cite[Sec.~2]{tsybakov2004introduction}. Thus, to obtain an impossibility result, it suffices to show that $\chi^2(\P_{\calH_1}\Vert\P_{\calH_0}) \to 0$. Due to the product structure under the null hypothesis and the finite-template construction under the alternative hypothesis, this task reduces to understanding how the \emph{entrywise} chi-square distances $\chi^2(\calP_{\ell,u}\Vert\calQ)$ accumulate in the second-moment calculation through overlaps between independently drawn planted configurations. This accumulation is, roughly speaking, captured by $\Theta^\star$. Indeed, for each template $\ell\in[m]$, \eqref{eq:Theta_star_def} aggregates the entrywise divergences $\chi^2(\calP_{\ell,u}\Vert\calQ)$ across template coordinates, and the maximum over $\ell$ corresponds to the template that yields the largest contribution to the second moment. As it turns out, $\Theta^\star$ characterizes the exponential growth rate of the second moment of the likelihood ratio and thus determines the information-theoretic detectability regime.

\subsection{Smooth-signal regime}

The lower and upper bounds in the previous subsections are general and hold for any set of templates. In this subsection, we show that for a non-trivial set of structured templates, these bounds align up to logarithmic factors in a specific regime. Specifically, for each template $\ell\in[m]$, let $\vartheta_\ell = (\vartheta_{\ell,u})_{u\in[k] \times [k]}$ denote a $k\times k$ array of local signal parameters. In the mean-shift model $\vartheta_{\ell,u}=(M_\ell)_u$, while in the variance-shift model $\vartheta_{\ell,u}=(\Sigma_\ell)_u$. Define the signal energy
\begin{align}
    \calE_\ell \triangleq \sum_{u\in[k] \times [k]} \vartheta_{\ell,u}^2,
    \qquad
    \calE \triangleq \max_{\ell\in[m]} \calE_\ell. \label{eq:energy_def}
\end{align}

\begin{definition}[Smooth-signal regime]
\label{def:smooth_signal}
We say that the finite-template model operates in the smooth-signal regime if, for all $\ell\in[m]$ and $u\in[k] \times [k]$:
\begin{enumerate}[label=(\roman*)]
    \item \label{item:i_boundness} Uniform boundedness: $\sup_{\ell\in[m]}\|\vartheta_\ell\|_\infty = O(1)$.
    \item \label{item:ii_non_spike} Non-spikiness:  $\sup_{\ell\in[m]} \frac{k^2\|\vartheta_\ell\|_\infty^2}{\calE_\ell} = O(1)$.
\end{enumerate}
\end{definition}
For this non-trivial family of templates, our upper bounds in Theorems~\ref{thrm:Detect_upper}--\ref{thrm:Detect_upper_variance} simplify as follows.

\begin{corollary}[Smooth-signal upper bounds] \label{cor:smooth_signal_upper}
Assume that the smooth-signal regime in Definition~\ref{def:smooth_signal} holds.
\begin{enumerate}[label=(\roman*)]
    \item \label{item:sum_test_corr} If 
    \begin{align}
         \calE=\omega\left(\frac{n^2}{m^2k^2}\right) \label{eq:mean_E_cond},
    \end{align}
   then the global sum test $\calA_{\s{sum}}(\s{X})$ in \eqref{eq:sum_algo} satisfies $\s{R}(\calA_{\s{sum}})=o(1)$ under the non-consecutive placement regime and under both consecutive placement regimes.
    \item \label{item:mean_scan_test} If 
    \begin{align}
        \calE=\omega(\log|\calB|),
    \end{align}
    then the template-aware scan test $\calA^{\mu}_{\s{scan, max}}(\s{X}; \calB)$ in     \eqref{eq:scan_algo} satisfies $\s{R}\p{\calA^{\mu}_{\s{scan, max}}}=o(1)$. In particular, for $\calB=\calB_{k,n}$ it suffices that $\calE=\omega\p{k\log\frac{n}{k}}$, and for $\calB\in\{\calB^{\s{con}}_{k,n},\calB^\circ_{k,n}\}$ it suffices that $\calE=\omega(\log n)$.
    \item \label{item:quad_test_corr} If
    \begin{align}
        \calE=\omega\p{\frac{n^2}{m^2k^2}} \label{eq:var_E_cond},
    \end{align}
    then the quadratic test $\calA_{\s{quad}}(\s{X})$ in \eqref{eq:sum_algo_var} satisfies $\s{R}(\calA_{\s{quad}})=o(1)$ under the non-consecutive placement regime and under both consecutive placement regimes.
     \item \label{item:var_scan_test} If 
     \begin{align}
         \calE=\omega(\log|\calB|+\log m),
     \end{align}
     then the finite-template scan test $\calA^{\sigma}_{\s{scan}}(\s{X};\calB)$ in \eqref{eq:scan_algo_var} satisfies $\s{R}\p{\calA^{\sigma}_{\s{scan}}}=o(1)$. In particular, for $\calB=\calB_{k,n}$ it suffices that $\calE=\omega\p{k\log\frac{n}{k}+\log m}$ and for $\calB\in\{\calB^{\s{con}}_{k,n},\calB^\circ_{k,n}\}$ it suffices that $\calE=\omega(\log n+\log m)$. 
\end{enumerate}
\end{corollary}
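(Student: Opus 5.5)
The plan is to derive each item of Corollary~\ref{cor:smooth_signal_upper} from Theorems~\ref{thrm:Detect_upper} and~\ref{thrm:Detect_upper_variance}. In each case I would show that the smooth-signal conditions of Definition~\ref{def:smooth_signal} turn the energy hypothesis on $\calE$ into the hypothesis of the corresponding theorem item. The scan items are essentially immediate; the global items need a lower bound on the $\ell_1$ mass of the templates in terms of their energy.

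\textbf{Scan items \ref{item:mean_scan_test} and \ref{item:var_scan_test}.}
\begin{itemize}
\item For the mean-shift scan, $\|M_{\max}\|_F^2=\max_\ell\calE_\ell=\calE$ by definition. Remark~\ref{rmk:consecutive_remark} gives $\log|\calB_{k,n}|\le 2k\log(en/k)$ and $\log|\calB^{\s{con}}_{k,n}|,\log|\calB^\circ_{k,n}|\le 2\log n$. Hence $\calE=\omega(\log|\calB|)$ is exactly condition \eqref{eq:IMSD_upper_bound}, respectively the $\omega(\log n)$ condition, and parts~\ref{item:Upper_mean_scan_non_con}--\ref{item:Upper_mean_scan_con} of Theorem~\ref{thrm:Detect_upper} apply. (The $k\log(en/k)$ versus $k\log(n/k)$ discrepancy only matters when $k\asymp n$; I would treat that edge case separately or absorb it.)
\item For the variance-shift scan, I use the elementary bound $x-\log(1+x)\ge \frac{x^2}{2(1+x)}\ge \frac{x^2}{2(1+\vartheta_0)}$ on $[0,\vartheta_0]$. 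It gives $\s{KL}(\Sigma_\ell)\ge \calE_\ell/(4(1+\vartheta_0))$, so $\max_\ell\s{KL}(\Sigma_\ell)\ge c\,\calE$ with $c$ depending only on $\vartheta_0$.
\item Then $\calE=\omega(\log|\calB|+\log m)$ yields \eqref{eq:variance_upper_cond}, respectively \eqref{eq:con_variance_upper_cond}, and parts~\ref{item:variance_scan_non_con}--\ref{item:variance_scan_con} of Theorem~\ref{thrm:Detect_upper_variance} apply.
\end{itemize}
Only the $\vartheta_0$ bound and the cardinality bounds are used here, not the smoothness conditions.

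\textbf{Global items \ref{item:sum_test_corr} and \ref{item:quad_test_corr}.} I need $|\mu_{\s{det}}|=\omega(n)$ and $\nu_{\s{det}}=\omega(n)$. The key inequality comes from non-spikiness~\ref{item:ii_non_spike}, which gives $\|\vartheta_\ell\|_\infty\le C\sqrt{\calE_\ell}/k$. Combined with $\|\vartheta_\ell\|_F^2\le\|\vartheta_\ell\|_\infty\|\vartheta_\ell\|_1$, this yields
\begin{align}
\|\vartheta_\ell\|_1\ \ge\ \frac{\calE_\ell}{\|\vartheta_\ell\|_\infty}\ \ge\ \frac{k\sqrt{\calE_\ell}}{C}.
\end{align}
\begin{itemize}
\item In the variance model all entries are nonnegative, so $\nu_{\s{det}}=\sum_\ell\|\Sigma_\ell\|_1\ge \frac{k}{C}\sum_\ell\sqrt{\calE_\ell}$.
\item If the energies are comparable, $\calE_\ell\ge c\,\calE$ for all $\ell$, this is $\gtrsim mk\sqrt{\calE}$. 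That is $\omega(n)$ exactly when $\calE=\omega(n^2/(m^2k^2))$, and Theorem~\ref{thrm:Detect_upper_variance}(1) applies.
\item For the mean model the same chain applies to $|\mu_{\s{det}}|=|\sum_\ell\sum_u(M_\ell)_u|$ provided there is no sign cancellation, i.e., all template entries share a common sign. In that case $|\mu_{\s{det}}|=\sum_\ell\|M_\ell\|_1$, and Theorem~\ref{thrm:Detect_upper}(1) applies.
\end{itemize}

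\textbf{Main obstacle.} The global items are where the statement needs care. With $\calE$ defined as a maximum, a single energetic template only gives $\nu_{\s{det}}\gtrsim k\sqrt{\calE}$, which falls short of $\omega(n)$ by a factor of $m$. In the mean case, mixed-sign templates could make $\mu_{\s{det}}$ vanish. My first attempt would be to read Definition~\ref{def:smooth_signal} as implicitly requiring uniformly comparable energies across templates, since the $\sup_\ell$ normalisation suggests homogeneity, together with same-sign mean templates. I would state these explicitly as the standing interpretation. If that is not intended, the fallback is to replace $\calE$ in \eqref{eq:mean_E_cond}--\eqref{eq:var_E_cond} by $\min_\ell\calE_\ell$, or by $(\frac1m\sum_\ell\sqrt{\calE_\ell})^2$, for which the argument above goes through verbatim.
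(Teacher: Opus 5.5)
\textbf{Scan items (ii) and (iv).} Your argument here is correct and is essentially the paper's. For the mean scan you use $\|M_{\max}\|_F^2=\calE$. For the variance scan you use a quadratic lower bound on $x-\log(1+x)$. The paper proves the two-sided bound $\frac{1}{4(1+\vartheta_0)^2}\calE_\ell\le\s{KL}(\Sigma_\ell)\le\frac14\calE_\ell$, but your one-sided bound is all that is needed. One slip: to pass from $\calE=\omega(\log|\calB_{k,n}|)$ to the hypothesis $\calE=\omega(k\log\frac nk)$ of Theorem~\ref{thrm:Detect_upper}, you need the lower bound $\log|\calB_{k,n}|\ge 2k\log\frac nk$ (from $\binom nk\ge (n/k)^k$). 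The upper bound you quote goes the wrong way.

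\textbf{Global items (i) and (iii).} Your diagnosis is correct, and it is the paper's argument that breaks. The paper only shows $|\mu_{\s{det}}|\le mk\sqrt{\calE}$ and $\nu_{\s{det}}\le mk\sqrt{\calE}$, by Cauchy--Schwarz and without using the smooth-signal hypotheses at all. It then reasons that $|\mu_{\s{det}}|=\omega(n)$, resp.\ $\nu_{\s{det}}=\omega(n)$, forces $\calE=\omega(n^2/(m^2k^2))$. That is the converse of what Theorems~\ref{thrm:Detect_upper} and~\ref{thrm:Detect_upper_variance} require.

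The direction actually needed is your bound $\|\vartheta_\ell\|_1\ge k\sqrt{\calE_\ell/C_{\s{sp}}}$, which is where non-spikiness genuinely enters. The two obstructions you name are real, and the items are false as stated.
\begin{itemize}
\item For (iii), take $k\asymp n^{2/5}$, $m\asymp n^{1/2}$, $\Sigma_1\equiv\vartheta_0>0$ and $\Sigma_\ell\equiv 1/n$ for $\ell\ge2$. Every template is constant, so the smooth-signal regime holds, and $\calE\asymp n^{4/5}=\omega(n^{1/5})=\omega(n^2/(m^2k^2))$. Yet $\nu_{\s{det}}\asymp n^{4/5}=o(n)$. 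Hence $\s{T}_{\s{quad}}/n$ has the same Gaussian limit under both hypotheses, and the risk of $\calA_{\s{quad}}$ tends to $1$.
\item The same construction with $M_\ell$ in place of $\Sigma_\ell$ breaks (i).
\item Cancellation alone also breaks (i). Take $m=1$, $k\asymp n^{3/5}$ even, and $M_1\in\{\pm1\}^{k\times k}$ with $k^2/2+1$ entries equal to $+1$. This gives $\mu_{\s{det}}=2$ while $\calE=k^2=\omega(n^{4/5})$.
\end{itemize}

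\textbf{What to do instead.} Do not present comparable energies and same-sign templates as an implicit reading of Definition~\ref{def:smooth_signal}. State the counterexamples and prove the corrected version, which is your fallback. For (iii), replacing $\calE$ by $(\frac1m\sum_\ell\sqrt{\calE_\ell})^2$ or by $\min_\ell\calE_\ell$ suffices, because the $\Sigma_\ell$ are nonnegative. For (i), that replacement is not enough on its own. You must keep a no-cancellation hypothesis, e.g.\ all entries of all $M_\ell$ of one sign, or more generally $|\mu_{\s{det}}|\ge c\sum_\ell\|M_\ell\|_1$.
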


Note that Corollary~\ref{cor:smooth_signal_upper} implies that the global test outperforms the scan test whenever $\log|\calB| = o\p{\frac{n^2}{m^2k^2}}$, and this is independent of whether the placement is consecutive or not. Next, our lower bounds in Theorems~\ref{thrm:lower_bound} simplify as follows.
\begin{corollary}[Smooth-signal lower bounds]
\label{cor:smooth_signal}
Consider the smooth-signal regime in Definition~\ref{def:smooth_signal}.
\begin{enumerate}
    \item Under the non-consecutive placement family $\calK_{k,m,n}$, if
    \begin{align}
        \calE
        = o\p{
            k \ \wedge\ \frac{n^2}{m^2k^2}},
        \label{eq:cor_cond}
    \end{align}
    then $d_{\s{TV}}(\P_{\calH_0},\P_{\calH_1})=o(1)$.

    \item 
    Under the circular consecutive placement family $\calK_{k,m,n}^{\circ}$, if
    \begin{align}
        \calE
        = o\p{
            \log\p{1+\frac{n^2}{k^2m^2}}},
        \label{eq:cor_cond_con}
    \end{align}
    then $d_{\s{TV}}(\P_{\calH_0},\P_{\calH_1})=o(1)$.
\end{enumerate}
\end{corollary}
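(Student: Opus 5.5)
The plan is to derive Corollary~\ref{cor:smooth_signal} directly from Theorem~\ref{thrm:lower_bound}: under the smooth-signal regime we show that $\Theta^\star=O(\calE/k^2)$, and then check that \eqref{eq:cor_cond} and \eqref{eq:cor_cond_con} imply \eqref{eq:non_consecutive_impossibility} and \eqref{eq:consecutive_impossibility}, respectively, for a suitable vanishing sequence $\delta_n$.

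\textbf{Step 1: entrywise $\chi^2$ bounds.} For the mean shift, $\chi^2(\calN(\mu,1)\Vert\calN(0,1))=e^{\mu^2}-1$. For the variance shift with $0\le\sigma\le\vartheta_0<1$, $\chi^2(\calN(0,1+\sigma)\Vert\calN(0,1))=(1-\sigma^2)^{-1/2}-1$. By uniform boundedness (i), in the mean case, and by $\vartheta_0<1$ in the variance case, both are at most $C\vartheta_{\ell,u}^2$ with $C$ depending only on the $O(1)$ bound and on $\vartheta_0$.

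\textbf{Step 2: bounding $\Theta^\star$.} By non-spikiness (ii), $\vartheta_{\ell,u}^2\le\|\vartheta_\ell\|_\infty^2\le C\calE_\ell/k^2$. Hence every exponent satisfies $m^2k^2\chi^2(\calP_{\ell,u}\Vert\calQ)\le C m^2\calE_\ell$. The average over $u$ is then at most $e^{Cm^2\calE_\ell}$, so $\Theta^\star\le C\calE/k^2$.

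This crude max bound loses nothing essential, because non-spikiness makes all entries comparable to the average. Jensen's inequality would go the wrong direction here, so the max bound is the natural choice. The main point to check is exactly this step: the $m^2$ inside the exponent cancels against the $1/m^2$ outside, leaving $\Theta^\star\lesssim \calE/k^2$ uniformly in $m$.

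\textbf{Step 3: non-consecutive case.} We must verify $C\calE/k^2\le \min\bigl(1/k,\ n^2\log(1+\delta_n)/(2m^2k^4)\bigr)$.

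\emph{First term.} It requires $\calE\le k/C$, which holds eventually since $\calE=o(k)$.

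\emph{Second term.} It requires $\calE\le n^2\log(1+\delta_n)/(2Cm^2k^2)$. Write $\calE=\epsilon_n\, n^2/(m^2k^2)$ with $\epsilon_n\to0$ and choose $\delta_n=e^{2C\epsilon_n}-1\to0$. Then $\log(1+\delta_n)=2C\epsilon_n$ and the inequality holds with equality.

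\textbf{Step 4: circular consecutive case.} We need $C\calE\le\log\bigl(1+n^2\log(1+\delta_n)/(4k^2m^2)\bigr)$. Set $x=n^2/(k^2m^2)$, which is at least $1$ since $mk\le n$, so $\log(1+x)\ge\log 2$. Write $\calE=\epsilon_n\log(1+x)$ with $\epsilon_n\to0$; note that $\calE\to0$. Choose $\delta_n\to0$ slowly, for instance $\log(1+\delta_n)=\sqrt{\epsilon_n}$.

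Now treat the two sizes of $x$ separately, using $\log(1+cx)$ with $c=\sqrt{\epsilon_n}/4$:
\begin{itemize}
\item If $cx\ge1$, then $\log(1+cx)\ge\log(cx)\ge \tfrac12\log x$ once $x\ge c^{-2}$. The remaining range $x\le c^{-2}$ is handled with the small-$cx$ estimate below.
\item For small $cx$, concavity gives $\log(1+cx)\ge cx/2$. Since $C\epsilon_n\log(1+x)\le C\epsilon_n x$ and $c=\sqrt{\epsilon_n}/4\gg C\epsilon_n$, the bound $C\epsilon_n\log(1+x)\le \log(1+cx)$ follows in every case.
\end{itemize}

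This bookkeeping in Step 4, interpolating between the regimes of small and large $x$, is the only slightly delicate part. Once the required inequality is verified, Theorem~\ref{thrm:lower_bound} yields $d_{\s{TV}}(\P_{\calH_0},\P_{\calH_1})=o(1)$ in both cases.
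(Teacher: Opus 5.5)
Your argument follows the paper's route. You bound $\Theta^\star\le\max_{\ell,u}\chi^2(\calP_{\ell,u}\Vert\calQ)$ by replacing each exponent with its maximum, then use the quadratic bounds $e^{\mu^2}-1\le C\mu^2$ and $(1-\sigma^2)^{-1/2}-1\le C\sigma^2$. Non-spikiness then gives $\Theta^\star\le C\calE/k^2$, which you plug into Theorem~\ref{thrm:lower_bound}. The one real difference is how $\delta$ is handled. The paper treats $\log(1+\delta)$ as a fixed positive constant, which does not literally meet the theorem's hypothesis $\delta_n\to0$; one would have to run the argument for each fixed $\delta$, obtain $\limsup d_{\s{TV}}\le\frac12\sqrt{\delta}$, and then let $\delta\downarrow0$. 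You instead choose $\delta_n$ adapted to the rate $\epsilon_n$, which matches the theorem as stated and is cleaner. If $\calE=0$ for some $n$, replace $\epsilon_n$ by $\max(\epsilon_n,1/n)$ so that $\delta_n>0$.

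There is, however, a slip in Step~4. On the range $c^{-1}\le x\le c^{-2}$ the product $cx$ can be large, and there the estimate $\log(1+cx)\ge cx/2$ fails (e.g.\ $cx=100$). So that range is not ``handled with the small-$cx$ estimate,'' and the conclusion ``in every case'' is unjustified as written.

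The fix also removes the case split. Since $y\mapsto\log(1+y)$ is concave and vanishes at $0$, we have $\log(1+cx)\ge c\log(1+x)$ for all $c\in[0,1]$ and $x\ge0$. With $c=\sqrt{\epsilon_n}/4$ this gives $\log\bigl(1+x\log(1+\delta_n)/4\bigr)\ge\frac{\sqrt{\epsilon_n}}{4}\log(1+x)\ge C\epsilon_n\log(1+x)=C\calE$ once $\sqrt{\epsilon_n}\le 1/(4C)$. This is also what the paper implicitly uses, with $c=\log(1+\delta)/4$.

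Finally, your aside ``$\calE\to0$'' is false in general. Whenever $mk\le n^{1-\eta}$ one has $\log(1+x)\ge 2\eta\log n$, so $\calE=\epsilon_n\log(1+x)$ may diverge. Nothing downstream uses this remark, so it does no harm.
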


\begin{corollary}[Smooth-signal consecutive placements]
\label{cor:smooth_signal_consecutive}
Consider the smooth-signal regime in Definition~\ref{def:smooth_signal}. Under the standard consecutive placement family $\calK_{k,m,n}^{\s{con}}$. If \eqref{eq:cor_cond_con} and $mk=o(n)$ hold, then $d_{\s{TV}}(\P_{\calH_0},\P_{\calH_1})=o(1)$.
\end{corollary}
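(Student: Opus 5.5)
The plan is to combine Corollary~\ref{cor:smooth_signal}, part 2, for the circular family with the coupling used in the proof sketch of Corollary~\ref{cor:standard_consecutive}.

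\textbf{Step 1: circular lower bound.} Under the smooth-signal regime, condition \eqref{eq:cor_cond_con} gives $d_{\s{TV}}(\P_{\calH_0},\P_{\calH_1}^{\circ})=o(1)$. This needs $k\le n/2$, which holds eventually because $mk=o(n)$ and $m\ge 1$.

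Equivalently, one can reduce directly to Corollary~\ref{cor:standard_consecutive}. For that it suffices to check that \eqref{eq:cor_cond_con} implies \eqref{eq:consecutive_impossibility} for some $\delta_n\to 0$. This is the same bound of $\Theta^\star$ by $\calE$ already used to derive Corollary~\ref{cor:smooth_signal} from Theorem~\ref{thrm:lower_bound}.

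\textbf{Step 2: boundary coupling.} Show that $d_{\s{TV}}(\P_{\calH_1}^{\circ},\P_{\calH_1}^{\s{con}})=O(mk/n)$. Let $W\subset\calK^{\circ}_{k,m,n}$ be the configurations in which no block wraps around; these are exactly $\calK^{\s{con}}_{k,m,n}$. Conditioning the uniform law on $\calK^{\circ}_{k,m,n}$ to $W$ gives the uniform law on $\calK^{\s{con}}_{k,m,n}$. The template labelling $\beta$ is uniform and independent in both models, so conditioning passes to the mixture:
\begin{align}
d_{\s{TV}}(\P^{\circ}_{\calH_1},\P^{\s{con}}_{\calH_1})\le \P(\s{K}\notin W).
\end{align}
This holds because for any law $\mu$ and event $W$, the distance between $\mu$ and $\mu(\cdot\mid W)$ is at most $\mu(W^c)$, and pushing forward through the data channel does not increase total variation.

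\textbf{Step 3: bound on wrap-around (the main obstacle).} Under the uniform law on disjoint circular configurations, the individual blocks are not exactly uniform, so $\P(\s{K}\notin W)\le 2mk/n$ is not immediate. I would argue by rotation invariance. Cyclic shifts of rows and columns act on $\calK^{\circ}_{k,m,n}$ and preserve uniformity, so each block's row interval, marginally, has a uniform starting point over $n$ positions. Only $k-1$ of those starting points wrap, and the same holds for columns. A union bound over the $m$ blocks and the two coordinates then gives
\begin{align}
\P(\s{K}\notin W)\le 2m(k-1)/n=o(1).
\end{align}
Care is needed because $\s{K}$ is unordered. Handle this by labelling the blocks by $\beta$; marginal uniformity of each labelled block's starting indices follows from invariance of the joint law under simultaneous shifts.

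\textbf{Step 4: conclusion.} By the triangle inequality,
\begin{align}
d_{\s{TV}}(\P_{\calH_0},\P^{\s{con}}_{\calH_1})\le d_{\s{TV}}(\P_{\calH_0},\P^{\circ}_{\calH_1})+d_{\s{TV}}(\P^{\circ}_{\calH_1},\P^{\s{con}}_{\calH_1})=o(1).
\end{align}
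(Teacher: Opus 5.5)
Your proposal is correct and follows essentially the paper's route: the circular bound from Corollary~\ref{cor:smooth_signal}, then the wrap-around estimate $d_{\s{TV}}(\P_{\calH_1}^{\circ},\P_{\calH_1}^{\s{con}})\le 2m(k-1)/n$ from the proof of Corollary~\ref{cor:standard_consecutive}, then the triangle inequality. Your conditioning-plus-data-processing step is equivalent to the paper's explicit coupling, and two of your remarks fill in points the paper only asserts: the rotation-invariance argument that each labelled block's starting indices are uniform, and the observation that $k\le n/2$ holds eventually because $mk=o(n)$.
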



\begin{proof}[Proof of Corollary~\ref{cor:smooth_signal_consecutive}]
By Corollary~\ref{cor:smooth_signal}, condition~\eqref{eq:cor_cond_con}
implies $d_{\s{TV}}(\P_{\calH_0},\P_{\calH_1}^{\circ})=o(1)$ for the circular consecutive model.
If $mk=o(n)$, Corollary~\ref{cor:standard_consecutive} gives 
$d_{\s{TV}}(\P_{\calH_1}^{\circ},\P_{\calH_1}^{\s{con}})=o(1)$.
The result follows from the triangle inequality.
\end{proof}

\begin{table}[t] \centering
\begin{tabular}{lcccc}
\toprule
 & Lower bound 
 & Scan (mean) 
 & Scan (variance) 
 & Global tests \\
\midrule
Non-consecutive
& $k \wedge \dfrac{n^2}{m^2k^2}$
& $k\log(n/k)$
& $k\log(n/k)+\log m$
& $\dfrac{n^2}{m^2k^2}$ \\[1.2em]

Consecutive
& $\log\!\left(1+\dfrac{n^2}{k^2m^2}\right)$
& $\log n$
& $\log n + \log m$
& $\dfrac{n^2}{m^2k^2}$ \\
\bottomrule
\end{tabular}
\caption{Energy scales in the smooth-signal regime in Definition~\ref{def:smooth_signal}. The ``Lower bound'' column gives an impossibility condition: if $\calE=o(\cdot)$ then $d_{\s{TV}}(\P_{\calH_0},\P_{\calH_1})=o(1)$. The scan and global columns give sufficient conditions: if $\calE=\omega(\cdot)$ then the corresponding test attains vanishing risk.}
\label{tab:smooth_signal_scales}
\end{table}

Table~\ref{tab:smooth_signal_scales} summarizes the resulting energy scales and the resulting bounds as captured by Corollaries \ref{cor:smooth_signal_upper}--\ref{cor:smooth_signal_consecutive}. It is evident that the lower and upper bounds coincide up to poly-logarithmic factors.  
Finally, we note that the classical homogeneous mean-shift submatrix detection problem \cite{dadon2024detection} is a special case of this framework. Indeed, for homogeneous templates $M_\ell=\lambda\mathbb{I}_{k\times k}$, the signal energy satisfies $\calE=\|M_\ell\|_F^2=k^2\lambda^2$, and $\mu_{\s{det}}=\lambda mk^2$. In this setting, Corollary~\ref{cor:smooth_signal} yields the classical impossibility condition $|\lambda|=o\p{\sqrt{k}\wedge\frac{n}{mk^2}}$, while the first two items of Theorem~\ref{thrm:Detect_upper} show that detection is possible (using the global and scan detection algorithms) once $|\lambda|=\omega\p{\sqrt{\frac{\log(n/k)}{k}}\vee\frac{n}{mk^2}}$. These results coincide with \cite{dadon2024detection}.

\section{Proofs}

\subsection{Proof of Theorem \ref{thrm:Detect_upper}}
\subsubsection{Sum test}
\begin{proof}
We analyze the Type-I and Type-II errors for the sum statistic \eqref{eq:Sum_stat} and the test \eqref{eq:sum_algo}. 
Under $\calH_0$, the entries of $\s{X}$ are i.i.d.\ $\calN(0,1)$, so
\begin{align}
    \s{T}_{\s{sum}}(\s{X}) 
    = \s{sign}(\mu_{\s{det}}) \sum_{i,j \in[n]} \s{X}_{ij} 
    \sim \calN(0,n^2).
\end{align}
Hence for any $\tau \ge 0$  , 
\begin{align}
\label{eq:sum_typeI}
    \P_{\calH_0}( \mathcal{A}_{\s{sum}}(\s{X}) = 1 ) 
    &= \P_{\calH_0}(\s{T}_{\s{sum}}(\s{X}) \geq \tau ) \\
    &= \P(\calN(0, n^2) \geq \tau )\\
    &\leq \exp\p{-\frac{\tau^2}{2n^2}}.
\end{align}

Under $\calH_1$, the law of $\s{X}$ is the mixture over the random planted block collection $\s{K}$ and, conditional on $\s{K}$, the random labeling $\beta:\s{K} \to [m]$. 
Fix any realization $(\s{K}, \beta)$. For each planted block $\s{B} \in \s{K}$ and each
$(i,j)\in \s{B}$, we have 
\begin{align}
    \E\pp{\s{X}_{ij} \vert \s{K}, \beta} = \p{M_{\beta(\s{B})}}_{\varphi_{\s{B}}(i,j)}.
\end{align}
Since each entry of $M_{\beta(\s{B})}$ appears exactly once over the block $\s{B}$, it holds that 
\begin{align}
    \sum_{(i,j) \in \s{B}} \E\pp{\s{X}_{ij} \vert \s{K}, \beta} = \sum_{u,v \in [k]} \p{M_{\beta(\s{B})}}_{uv}
\end{align}
Summing over $\s{B} \in \s{K}$ and using that $\beta:\s{K}\to[m]$ is a bijection,
\begin{align}
    \sum_{\s{B} \in \s{K}} \sum_{(i,j)\in\s{B}}\E\pp{\s{X}_{ij} \vert \s{K}, \beta} = \sum_{\ell = 1}^m \sum_{u,v\in[k]}(M_\ell)_{uv} = \mu_{\s{det}}.
\end{align}
Therefore, 
\begin{align}
    \s{T}_{\s{sum}}(\s{X}) \vert (\s{K} , \beta) \sim \mathcal{N}\p{\abs{\mu_{\s{det}}}, n^2},
\end{align}
and hence $\s{T}_{\s{sum}}(\s{X})\sim \calN(\abs{\mu_{\s{det}}},n^2)$ under $\calH_1$. Therefore,
\begin{align}
    \P_{\calH_1}( \calA_{\s{sum}}(\s{X}) = 0 ) 
    &= \P_{\calH_1}(\s{T}_{\s{sum}}(\s{X}) \leq \tau) \\
    &= \P\p{\mathcal{N}\p{\abs{\mu_{\s{det}}}, n^2} \leq \tau}\\
    &\leq \exp\ppp{-\frac{(\tau - \abs{\mu_{\s{det}}})^2}{2n^2}}.
\end{align}
for any $\tau\le \abs{\mu_{\s{det}}}$. Choosing $\tau = \tau_{\s{sum}} = \abs{\mu_{\s{det}}}/2$ as in \eqref{eq:mean_thresholds}, yields $\s{R}(\mathcal{A}_{\s{sum}}) \le 2\exp\ppp{-\frac{\abs{\mu_{\s{det}}}^2}{8n^2}}$. In particular, if $\abs{\mu_{\s{det}}}/n \to \infty$, then $\s{R}(\calA_{\s{sum}}) = o(1)$. 

The argument does not depend on the placement family and therefore applies to the non-consecutive, standard consecutive, and circular consecutive models.
\end{proof}

\subsubsection{Scan test}
\begin{proof}
    Recall the scan statistic in \eqref{eq:Scan_stat} and the template-aware scan test in \eqref{eq:scan_algo}. In general, 
    \begin{align}
        \s{T}^{\mu}_{\s{scan}}(\s{X}; \calB, M) 
        = \max_{\s{B} \in \calB} \sum_{(i,j) \in \s{B}} M_{\varphi_{\s{B}}(i,j)} \s{X}_{ij}, \qquad
        \calA^{\mu}_{\s{scan, max}}(\s{X}) = \Ind\{\s{T}^{\mu}_{\s{scan}}(\s{X};\calB,M)\ge\tau\},
    \end{align}
    where $\calB=\calB_{k,n}$ or $\calB\in\{\calB_{k,n}^{\s{con}},\calB_{k,n}^{\circ}\}$.
    Let $M_{\max} = M_{\ell_{\max}}$ where $\ell_{\max}\in\arg\max_{\ell\in[m]}\|M_\ell\|_F^2$, 
    
    Under $\calH_0$, the entries of $\s{X}$ are i.i.d. $\calN(0,1)$. Hence for any fixed $\s{B} \in \calB$
    \begin{align}
        \sum_{(i,j) \in \s{B}} \p{M_{\max}}_{\varphi_{\s{B}}(i,j)} \s{X}_{ij} \sim \calN(0, \norm{M_{\max}}_{F}^2),
    \end{align}
     since $\varphi_{\s{B}}$ maps the indices of $\s{B}$ to $[k]\times[k]$.      Therefore, applying the union bound over $\s{B} \in \calB$ and a Gaussian tail bound yields
    \begin{align}
        \P_{\calH_0}\p{\calA^{\mu}_{\s{scan, max}}(\s{X}) =1} 
        &= 
        \P_{\calH_0}\p{\s{T}^{\mu}_{\s{scan}}(\s{X}; \calB, M_{\max}) \ge \tau}
        \\
        &\le  \sum_{\s{B} \in \calB} \P\p{\calN(0 , \norm{M_{\max}}_{F}^2) \ge \tau}\\
        \label{eq:Gaus_chernoff}
        &\le \abs{\calB} \exp\ppp{-\frac{\tau^2}{2\norm{M_{\max}}_{F}^2}}. 
    \end{align}

    Under $\calH_1$, the law of $\s{X}$ is the mixture over the random planted block collection $\s{K}^{\star}$ and, conditional on $\s{K}^{\star}$, the random labeling $\beta:\s{K}^{\star} \to [m]$. Fix any realization $(\s{K}^{\star}, \beta)$. Define the planted block carrying $M_{\s{max}}$ by $\s{B}_{\max}^{\star} \triangleq \beta^{-1}(\ell_{\max})$ which is well-defined and unique since $\beta$ is a bijection. 
    
    By definition of the scan statistic as a maximum over $\calB$, we have
    \begin{align}
        \s{T}^{\mu}_{\s{scan}}(\s{X}; \calB, M_{\max}) \ge
        \sum_{(i,j)\in \s{B}_{\max}^\star} (M_{\max})_{\varphi_{\s{B}_{\max}^\star}(i,j)} \s{X}_{ij}. 
    \end{align}
    Conditionally on $(\s{K}^\star, \beta)$, entries $(i,j)$ on $\s{B}_{\max}^\star$ are independent and satisfy    \begin{align}
        \s{X}_{ij} = (M_{\max})_{\varphi_{\s{B}^\star_{\max}}(i,j)} + \s{Z}_{ij}, \quad \text{where} \quad \s{Z}_{ij}\stackrel{i.i.d.}{\sim}\calN(0,1). 
    \end{align}
    Therefore,
    \begin{align}
        \sum_{(i,j) \in \s{B}_{\max}^\star} (M_{\max})_{\varphi_{\s{B}_{\max}^\star}(i,j)} \s{X}_{ij}
        &= \sum_{(i,j)\in\s{B}_{\max}^\star} (M_{\max})_{\varphi_{\s{B}_{\max}^\star}(i,j)}^2 +  \sum_{(i,j)\in\s{B}_{\max}^\star}(M_{\max})_{\varphi_{\s{B}_{\max}^\star}(i,j)} \s{Z}_{ij} \\
        &= \norm{M_{\max}}_{F}^2 + \calN\p{0, \norm{M_{\max}}_{F}^2} \\
        &\sim \calN\p{\norm{M_{\max}}_{F}^2, \norm{M_{\max}}_{F}^2}. \label{eq:mean_scan_typeI}
    \end{align}
    It follows that, conditional on $(\s K^\star,\beta)$, for $\tau \le |M_{\max}|_F^2$ a Gaussian lower-tail bound gives
    \begin{align}
        \P_{\calH_1}\p{\calA^{\mu}_{\s{scan, max}}(\s{X}) = 0 \,\vert\, \s{K}^\star, \beta}  
        &= 
        \P_{\calH_1}\p{\s{T}^{\mu}_{\s{scan}}(\s{X}; \calB, M_{\max}) \le \tau \vert \s{K}^\star, \beta}
        \\
        &\le  \P\p{ \calN\p{\norm{M_{\max}}_{F}^2, \norm{M_{\max}}_{F}^2} \le \tau}\\
        &\le \exp\ppp{-\frac{\p{\tau - \norm{M_{\max}}_{F}^2}^2}{2 \norm{M_{\max}}_{F}^2}} \label{eq:scan_typeII_bound_cond}.
\end{align}
Since the bound in \eqref{eq:scan_typeII_bound_cond} does not depend on $(\s{K}^\star, \beta)$,
it also holds for the marginal Type-II error under $\calH_1$, that is
\begin{align}
    \P_{\calH_1}\p{\calA^{\mu}_{\s{scan, max}}(\s{X}) = 0} \le \exp\ppp{-\frac{\p{\tau - \norm{M_{\max}}_{F}^2}^2}{2 \norm{M_{\max}}_{F}^2}}. \label{eq:mean_scan_typeII}
\end{align}
Combining~\eqref{eq:mean_scan_typeI} and~\eqref{eq:mean_scan_typeII} yields
\begin{align}
    \s{R}\p{\calA^{\mu}_{\s{scan, max}}(\s{X})} \le {\abs{\calB} \exp\ppp{-\frac{\tau^2}{2\norm{M_{\max}}_{F}^2}} + \exp\ppp{-\frac{\p{\tau - \norm{M_{\max}}_{F}^2}^2}{2 \norm{M_{\max}}_{F}^2}}}.
\end{align}
Finally, note that $\abs{\calB_{k,n}} = \binom{n}{k}^2 \le \p{\frac{en}{k}}^{2k}$ and $\abs{\calB_{k,n}^{\s{con}}} = (n-k+1)^2 \le n^2$, with the same bound holding for $\calB_{k,n}^{\circ}$.
Substituting the corresponding bound on $\abs{\calB}$ and choosing the $\tau$ as in  \eqref{eq:mean_thresholds} yield $\s{R}\p{\calA^{\mu}_{\s{scan, max}}(\s{X})} = o(1)$ under the conditions stated in items~\ref{item:Upper_mean_scan_non_con}-\ref{item:Upper_mean_scan_con} of Theorem \ref{thrm:Detect_upper}. The argument depends on $\calB$ only through its cardinality and therefore applies to the non-consecutive, standard consecutive, and circular consecutive placement models.
\end{proof}

\subsection{Proof of Theorem \ref{thrm:Detect_upper_variance}}
\subsubsection{Quadratic test}
\begin{proof}
    Recall the global centered quadratic statistic $\s{T}_{\s{quad}}$ in \eqref{eq:Sum_stat_var}, and the quadratic test $\calA_{\s{quad}}$ in \eqref{eq:sum_algo_var} with the threshold $\tau_{\s{quad}}=\nu_{\s{det}}/2$ as in \eqref{eq:variance_thresholds}. We bound the Type-I and Type-II error probabilities. 
    
    Under $\calH_0$, $\s{X}_{ij} \sim \calN(0,1)$, hence $\E_{\calH_0}\pp{\s{X}_{ij}^2 - 1} = 0$ and $\Var_{\calH_0}(\s{X}_{ij}^2 - 1) = 2$. Therefore, $\Var_{\calH_0}(\s{T}_{\s{quad}}) = 2n^2$. 
    By Chebyshev's inequality,
    \begin{align}
        \P_{\calH_0}\p{\calA_{\s{quad}}(\s{X})=1}
        &= \P_{\calH_0}\p{\s{T}_{\s{quad}}(\s{X})\ge \tau_{\s{quad}}}
        \\
        &\le
        \frac{\Var_{\calH_0}\p{\s{T}_{\s{quad}}(\s{X})}}{\tau_{\s{quad}}^2}
        =
        \frac{2n^2}{(\nu_{\s{det}}/2)^2}
        =
        \frac{8n^2}{\nu_{\s{det}}^2}
        = o(1),
    \end{align}
    whenever $\nu_{\s{det}} = \omega(n)$. 

    Under $\calH_1$, conditionally on $(\s{K}, \beta)$, each planted entry satisfies $\s{X}_{ij}\sim\calN\p{0,\,1+(\Sigma_{\beta(\s{B})})_{\varphi_{\s{B}}(i,j)}}$ for $(i,j)\in\s{B}$ and $\s{X}_{ij}\sim\calN(0,1)$ otherwise. Therefore,
    \begin{align}
        \E_{\calH_1}\pp{\s{X}_{ij}^2-1 \vert \s{K},\beta}
        =
        \begin{cases}
            (\Sigma_{\beta(\s{B})})_{\varphi_{\s{B}}(i,j)}, & (i,j)\in\s{B}\in\s{K},\\
            0, & \text{otherwise},
        \end{cases}
    \end{align}
    and summing over all entries yields
    \begin{align}
        \E_{\calH_1}\pp{\s{T}_{\s{quad}}(\s{X}) \vert \s{K},\beta}
        =
        \sum_{\s{B}\in\s{K}}\sum_{(i,j)\in\s{B}} (\Sigma_{\beta(\s{B})})_{\varphi_{\s{B}}(i,j)}
        =
        \nu_{\s{det}}.
    \end{align}\allowdisplaybreaks
    Moreover, since the entries are independent conditional on $(\s{K},\beta)$ and for $Y\sim\calN(0,\sigma^2)$ we have $\Var(Y^2-1)=2\sigma^4$,
    \begin{align}
        \Var_{\calH_1}\p{\s{T}_{\s{quad}}(\s{X}) \vert \s{K},\beta}
        &=
        \sum_{i,j \in [n]}
        \Var_{\calH_1}\p{\s{X}_{ij}^2 - 1 \vert \s{K},\beta}
        \\
        &=
        \sum_{(i,j) \notin \bigcup_{\s{B}\in\s{K}} \s{B}}
        \Var\p{\s{X}_{ij}^2 - 1}
        +
        \sum_{\s{B}\in\s{K}}
        \sum_{(i,j)\in\s{B}}
        \Var_{\calH_1}\p{\s{X}_{ij}^2 - 1 \vert \s{K},\beta}
        \\
        &=
        2(n^2 - mk^2)
        +
        \sum_{\s{B}\in\s{K}}
        \sum_{(i,j)\in\s{B}}
        2\Var\p{\s{X}_{ij} \vert \s{K},\beta}^2
        \\
        &=
        2(n^2 - mk^2)
        +
        2\sum_{\s{B}\in\s{K}}
        \sum_{(i,j)\in\s{B}}
        \p{1 + (\Sigma_{\beta(\s{B})})_{\varphi_{\s{B}}(i,j)}}^2
        \\
        &=
        2(n^2 - mk^2)
        +
        2\sum_{\ell=1}^m
        \sum_{u,v\in[k]}
        \p{1 + (\Sigma_\ell)_{uv}}^2
        \\
        &=
        2n^2
        + 4 \nu_{\s{det}}
        + 2\sum_{\ell=1}^m
        \sum_{u,v\in[k]}
        (\Sigma_\ell)_{uv}^2 .
    \end{align}
    Since each template appears exactly once, both $\E_{\calH_1}\pp{\s{T}_{\s{quad}}(\s{X}) \vert \s{K},\beta}$ and $\Var_{\calH_1}\p{\s{T}_{\s{quad}}(\s{X}) \vert \s{K},\beta}$ are deterministic (they depend only on the template family), hence they equal the corresponding unconditional quantities. Applying Chebyshev's inequality with $\tau_{\s{quad}}=\nu_{\s{det}}/2$ gives
    \begin{align}
        \P_{\calH_1}\p{\calA_{\s{quad}}(\s{X})=0}
        &=
        \P_{\calH_1}\p{\s{T}_{\s{quad}}(\s{X}) \le \tau_{\s{quad}}}
        \\
        &\le
        \frac{\Var_{\calH_1}\p{\s{T}_{\s{quad}}(\s{X})}}{(\E_{\calH_1}[\s{T}_{\s{quad}}(\s{X})]-\tau_{\s{quad}})^2}
        \\
        &=
        \frac{2n^2+4\nu_{\s{det}}+2\sum_{\ell=1}^m\sum_{u,v\in[k]}(\Sigma_\ell)_{uv}^2}{(\nu_{\s{det}}/2)^2}.
    \end{align}
    Using $\max_{\ell,u,v}(\Sigma_\ell)_{uv}\le \vartheta_0$ we have
    \begin{align}
        \sum_{\ell=1}^m\sum_{u,v\in[k]}(\Sigma_\ell)_{uv}^2
        \le
        \vartheta_0\,\nu_{\s{det}},
    \end{align}
    The numerator is $O(n^2+\nu_{\s{det}})$, so the bound is $o(1)$ whenever $\nu_{\s{det}}=\omega(n)$.
    The argument does not depend on the placement family and therefore applies to the non-consecutive, standard consecutive, and circular consecutive models.
\end{proof}

\subsubsection{Scan test}
\begin{proof}
    Recall the scan statistic in \eqref{eq:Scan_stat_HVar} and the finite-template scan test $\calA^{\sigma}_{\s{scan}}(\s{X};\calB)$ in \eqref{eq:scan_algo_var}, where $\calB=\calB_{k,n}$ for non-consecutive placements, $\calB=\calB^{\s{con}}_{k,n}$ for standard consecutive placements, and $\calB=\calB^{\circ}_{k,n}$ for circular consecutive placements.
       
    For any $k \times k$ block $\s{B} \in \calB$, we define the following distributions:
    \begin{itemize}
        \item Null distribution. We define $\calP_0^{(\s{B})} \triangleq \otimes_{(i,j)\in \s{B}}\calN(0,1)$ representing the joint distribution of the entries in $\s{B}$ under the null hypothesis.
        \item Local alternative. For a given template $\Sigma_{\ell} \in \calS$, we define $\calP_\ell^{(\s{B})} \triangleq \otimes_{(i,j)\in\s{B}} \calN(0, 1 + (\Sigma_{\ell})_{\varphi_{\s{B}}(i,j)})$. This distribution is not the true law under  $\calH_1$; rather, it represents a local alternative obtained by embedding the template $\Sigma_{\ell}$ into the block $\s{B}$.
    \end{itemize}
    Accordingly, we define the $\ell$-template-matched loglikelihood score of $\s{B}$
    \begin{align}
        \calL_{\ell}(\s{B})  \triangleq \log \frac{\calP_{\ell}^{(\s{B})}}{\calP_{0}^{(\s{B})}}(\s{X}) = \sum_{(i,j) \in \s{B}} \frac{1}{2} \p{\frac{(\Sigma_{\ell})_{\varphi_{\s{B}}(i,j)}}{1 + (\Sigma_{\ell})_{\varphi_{\s{B}}(i,j)}} \s{X}_{ij}^2 - \log\p{1 + (\Sigma_{\ell})_{\varphi_{\s{B}}(i,j)}}},
    \end{align}
    and the corresponding blockwise KL-divergence 
    \begin{align}
        \s{KL}_{\ell}(\s{B}) \triangleq \s{d}_{\s{KL}}(\calP_{\ell}^{(\s{B})}\Vert \calP_{0}^{(\s{B})}) = \frac{1}{2} \sum_{u,v \in [k]} \p{(\Sigma_{\ell})_{uv} - \log(1 + (\Sigma_{\ell})_{uv})},
    \end{align}
    which does not depend on $\s{B}$. 
    The finite-template scan statistic is
    \begin{align}
        \s{T}^{\sigma}_{\s{scan}}(\s{X};\calB)
        \triangleq
        \max_{\ell\in[m]} \max_{\s{B}\in \calB} \calL_{\ell}(\s{B}).
    \end{align}   
    We analyze the Type-I and Type-II error probabilities. 
    Under $\calH_0$, the entries $\s{X}_{ij}$ are i.i.d. $\calN(0,1)$. For any fixed $(\ell,\s{B})$, $\calL_{\ell}(\s{B})$ is a log-likelihood ratio, hence $\E_{\calH_0}\pp{e^{\calL_{\ell}(\s{B})}} = 1$. By Markov's inequality,
    \begin{align}
        \P_{\calH_0}\p{\calL_{\ell}(\s{B}) \ge \tau^{\sigma}_{\s{scan}}} 
        \le 
        e^{-\tau^{\sigma}_{\s{scan}}}.
    \end{align}
    Applying a union bound over all $\ell\in[m]$ and $\s{B}\in\calB$,     \begin{align}
        \P_{\calH_0}\p{\s{T}^{\sigma}_{\s{scan}}(\s{X};\calB) \ge \tau^{\sigma}_{\s{scan}}} 
        \le \sum_{\ell=1}^{m} \sum_{\s{B} \in \calB} \P_{\calH_0}\p{\calL_{\ell}(\s{B}) \ge \tau^{\sigma}_{\s{scan}}} 
        \le m \abs{\calB} e^{-\tau^{\sigma}_{\s{scan}}} 
    \end{align}
    Setting $\tau^{\sigma}_{\s{scan}} = (1+\delta)(\log|\calB| + \log m)$ yields $\P_{\calH_0}\p{ \s{T}^{\sigma}_{\s{scan}}(\s{X};\calB) \ge \tau^{\sigma}_{\s{scan}}} \le (m|\calB|)^{-\delta} = o(1)$.

    Under $\calH_1$, fix a realization $(\s{K}^{\star}, \beta)$. Let $\ell^\star \in \arg\max_{\ell\in[m]} \s{KL}(\Sigma_\ell)$, where $\s{KL}(\Sigma)$ is defined in~\eqref{eq:KL_definition}, and define $\s{B}^\star \triangleq \beta^{-1}(\ell^\star)$. Since the scan ranges over all templates and all blocks,
    \begin{align}
        \s{T}^{\sigma}_{\s{scan}}(\s{X};\calB) \ge \calL_{\ell^\star}(\s{B}^\star).
    \end{align}

    Conditionally on $(\s{K}^{\star}, \beta)$, the entries on $\s{B}^{\star}$ 
    satisfy
    \begin{align}
        \s{X}_{ij}
        = \sqrt{1 + (\Sigma_{\ell^\star})_{\varphi_{\s{B}^\star}(i,j)}}\,\s{Z}_{ij}, \qquad 
        \s{Z}_{ij}\sim\calN(0,1).
    \end{align}
    Let $(\Sigma_{\ell^\star})_{uv} \triangleq \sigma_{uv}$. Then, substituting into the definition of $\calL_{\ell^{\star}}(\s{B}^{\star})$ yields
    \begin{align}
        \calL_{\ell^{\star}}(\s{B}^{\star}) &= \sum_{(i,j) \in \s{B}^{\star}} \frac{1}{2} \p{(\Sigma_{\ell^{\star}})_{\varphi_{\s{B}^{\star}}(i,j)} \s{Z}_{ij}^2 - \log\p{1 + (\Sigma_{\ell^{\star}})_{\varphi_{\s{B}^{\star}}(i,j)}}}\\
        &= \sum_{u,v \in [k]} \frac{1}{2} \p{\sigma_{uv} \s{Z}_{uv}^2 - \log\p{1 + \sigma_{uv}}}\\
        &= \frac{1}{2} \sum_{u,v \in [k]} \p{\sigma_{uv} - \log\p{1 + \sigma_{uv}}} + \frac{1}{2} \sum_{(u,v) \in [k]} \sigma_{uv} \p{\s{Z}_{uv}^2 - 1}, \label{eq:pm_1}\\
        &= \s{KL}(\Sigma_{\ell^\star}) + \frac{1}{2} \sum_{(u,v) \in [k]} \sigma_{uv} \p{\s{Z}_{uv}^2 - 1} 
        .
    \end{align}

    The distribution of $\calL_{\ell^{\star}}(\s{B}^\star)$ depends only on the template $\Sigma_{\ell^\star}$ and not on the remainder of $(\s{K}^{\star},\beta)$; hence the conditional and unconditional probabilities coincide. Therefore,
    \begin{align}
        \P_{\calH_1}\p{\calA^{\sigma}_{\s{scan}}(\s{X};\calB)=0}
        &= \P_{\calH_1}(\s{T}^{\sigma}_{\s{scan}}(\s{X}; \calB) \le \tau^{\sigma}_{\s{scan}})\\
        &\le \P_{\calH_1}\pp{\calL_{\ell^{\star}}(\s{B}^{\star}) < \tau^{\sigma}_{\s{scan}}}\\
        &= \P_{\calH_1}\pp{\sum_{u,v \in [k]} \sigma_{uv}(\s{Z}_{uv}^2 - 1) < 2\p{\tau^{\sigma}_{\s{scan}} -\s{KL}(\Sigma_{\max})} }.
    \end{align}
    Let $\Delta \triangleq \tau^{\sigma}_{\s{scan}} -\s{KL}(\Sigma_{\max})$. For any $\lambda>0$, applying Chernoff's bound gives
    \begin{align}
        \P_{\calH_1}\p{\calA^{\sigma}_{\s{scan}}(\s{X};\calB)=0} 
        &\le e^{-2\lambda \Delta} \E\pp{e^{-\lambda \sum_{u,v \in [k]}\sigma_{uv}(\s{Z}_{uv}^2 - 1)}}\\
        &= e^{-2\lambda \Delta} \prod_{u,v \in [k]} \E\pp{e^{-\lambda \sigma_{uv}(\s{Z}_{uv}^2 - 1)}}\\
        &= \exp \ppp{-2\lambda \Delta + \lambda^2 \sum_{u,v \in [k]} \sigma_{uv}^2} \\
        &= \exp \ppp{-2\lambda \Delta + \lambda^2 \norm{\Sigma_{\ell^{\star}}}_F^2}, \label{eq:var_typeII_chrnoff_lambda}
    \end{align}
    where we use the fact that $\ppp{\s{Z}_{uv}}_{u,v \in [k]}$ are i.i.d. standard Gaussian with moment generating function $\E\pp{e^{t\s{Z}^2}} = (1 - 2t)^{-1/2}$. For $\lambda > 0$ satisfying $2\lambda \sigma_{uv} < 1$, we obtain     $\E\pp{e^{-\lambda \sigma_{uv}(\s{Z}_{uv}^2 - 1)}} = e^{\lambda \sigma_{uv}} (1 + 2\lambda\sigma_{uv})^{-1/2}$. Taking $\log$ yields
    \begin{align}
        \log \E\pp{e^{-\lambda \sigma_{uv}(\s{Z}_{uv}^2 - 1)}} &= \lambda \sigma_{uv} -\frac{1}{2} \log(1 + 2\lambda\sigma_{uv})
        \le 
        \lambda^2 \sigma_{uv}^2,
    \end{align}
    where we used $\log(1 + x) \ge x - \tfrac{x^2}{2}$ for all $x \ge 0$.
    Finally, $\E\pp{e^{-\lambda \sigma_{uv}(\s{Z}_{uv}^2 - 1)}} \le \exp\ppp{\lambda^2 \sigma_{uv}^2}$. 
    Since $0 \le \sigma_{uv} \le \vartheta_0 < 1$, the admissibility condition $2\lambda \sigma_{uv} < 1$ holds uniformly whenever $\lambda < 1/(2\vartheta_0)$.
    
    We now minimize the right-hand side of \eqref{eq:var_typeII_chrnoff_lambda} over $\lambda$. Completing the square,
    \begin{align}
        - 2 \lambda \Delta + \lambda^2 \norm{\Sigma_{\ell^{\star}}}_F^2 = \norm{\Sigma_{\ell^{\star}}}_F^2 \p{\lambda - \frac{\Delta}{\norm{\Sigma_{\ell^{\star}}}_F^2}}^2 - \frac{\Delta^2}{\norm{\Sigma_{\ell^{\star}}}_F^2},
    \end{align}
    so the minimum is achieved at $\lambda^\star = \frac{\Delta}{\norm{\Sigma_{\ell^{\star}}}_F^2}$. 
    Under the corresponding assumption of Theorem~\ref{thrm:Detect_upper_variance} (items~\ref{item:variance_scan_non_con}-~\ref{item:variance_scan_con}), we have $\max_{\ell} \s{KL}(\Sigma_\ell) \gg \log|\calB| + \log m$. Since $\tau^{\sigma}_{\s{scan}} = (1 + \delta)\p{\log \abs{\calB} + \log m}$, it follows that $\Delta = \KL(\Sigma_{\ell^\star}) - \tau^{\sigma}_{\s{scan}} \to +\infty$ and in particular $0<\Delta<\s{KL}(\s{\Sigma}_{\ell^{\star}})$ for all sufficiently large $n$. The minimizer $\lambda^\star = \frac{\Delta}{\norm{\Sigma_{\ell^\star}}_F^2}$ therefore satisfies $\lambda^\star>0$. Moreover, since $\log(1+x)\ge x-\tfrac{x^2}{2}$ for $x\ge0$, we have for all $\Sigma \in \calS$
    \begin{align}
        \s{KL}(\Sigma) = \frac{1}{2}\sum_{u,v\in[k]}\p{\sigma_{uv}-\log(1+\sigma_{uv})}\le \frac{1}{4}\sum_{u,v\in[k]}\sigma_{uv}^2 = \frac{1}{4}\norm{\Sigma}_F^2,
    \end{align}
    hence $\lambda^\star \le \s{KL}(\Sigma_{\ell^\star})/\norm{\Sigma_{\ell^\star}}_F^2 \le 1/4$.
    Because $0\le \sigma_{uv}\le \vartheta_0<1$, we have $1/(2\vartheta_0)>1/2$, and therefore $1/4<1/(2\vartheta_0)$. Thus $\lambda^\star < 1/(2\vartheta_0)$, and the chosen optimizer lies within the range where the moment generating function bound is valid.
    
    Substituting $\lambda^\star$ 
    yields
    \begin{align}
        \P_{\calH_1}\p{\calA^{\sigma}_{\s{scan}}(\s{X};\calB)=0} \le \exp\ppp{-\frac{\p{\s{KL}(\Sigma_{\ell^{\star}}) -\tau^{\sigma}_{\s{scan}}}^2}{ \norm{\Sigma_{\ell^{\star}}}_{F}^2}}.
    \end{align}
    Choosing $\tau^{\sigma}_{\s{scan}} = (1 + \delta)\p{\log \abs{\calB} + \log m}$ implies $\P_{\calH_1}\p{\calA^{\sigma}_{\s{scan}}(\s{X};\calB)=0} = o(1)$ whenever $\s{KL}\p{\Sigma_{\ell^{\star}}} = \max_{\ell\in [m]} \s{KL}(\Sigma_{\ell}) \gg \log \abs{\calB} + \log m$. Hence, under this condition, the Type-II error probability vanishes.

    The argument applies uniformly to all placement families; the dependence on the placement model enters only through the cardinality $|\calB|$. 
    For non-consecutive placements, $|\calB_{k,n}| = \binom{n}{k}^2 \le (en/k)^{2k}$. For standard consecutive placements, $|\calB^{\s{con}}_{k,n}| = (n-k+1)^2$, and for circular consecutive placements, $|\calB^{\circ}_{k,n}| = n^2$. Substituting these bounds into threshold $\tau_{\s{scan}}=(1+\delta)(\log|\calB|+\log m)$ and the condition above 
    yields the thresholds and regimes stated in the main results.
\end{proof}

\subsection{Proof of Corollary \ref{cor:smooth_signal_upper}
}
We treat each item separately. 
\paragraph{~\ref{item:sum_test_corr} Mean-shift (global sum test).} 
Recall $\mu_{\det}
= \sum_{\ell=1}^m \sum_{u,v\in[k]} (M_\ell)_{uv}$. For any matrix $A\in\mathbb{R}^{k\times k}$, it holds that $\abs{\sum_{u,v \in [k]} A_{uv}}
\le \sum_{u,v \in [k]} |A_{uv}|
\le k \norm{A}_F$. Hence, 
\begin{align}
    |\mu_{\s{det}}| \le \sum_{\ell=1}^m \abs{\sum_{u,v \in [k]} (M_\ell)_{uv}} 
    \le k \sum_{\ell=1}^m \norm{M_\ell}_F
    \le m k \max_{\ell \in [m]} \norm{M_\ell}_F. 
\end{align}
Since $\calE=\max_{\ell \in [m]}\norm{M_\ell}_F^2$, we obtain $ |\mu_{\det}| \le mk \sqrt{\calE}$. Therefore, if $|\mu_{\det}|=\omega(n)$, then necessarily 
\begin{align}
    mk\sqrt{\calE}=\omega(n),
\end{align}
which implies~\eqref{eq:mean_E_cond}. The risk statement follows directly from item~\ref{item:mean_upper_sum} in  Theorem~\ref{thrm:Detect_upper}.

\paragraph{~\ref{item:mean_scan_test} Mean-shift (template-awere scan test).} 
In the mean-shift model, $\calE_{\ell}=\norm{M_\ell}_F^2$, and $\calE=\|M_{\max}\|_F^2$. Thus, the scan condition of Theorem~\ref{thrm:Detect_upper} is equivalent to $\calE=\omega(\log|\calB|)$. For $\calB=\calB_{k,n}$, $\log|\calB|=\Theta\p{k\log\frac{n}{k}}$, and for $\calB\in\ppp{\calB^{\s{con}}_{k,n},\calB^\circ_{k,n}}$, $\log|\calB|=\Theta(\log n)$. The risk bound follows from Theorem~\ref{thrm:Detect_upper}.

\paragraph{~\ref{item:quad_test_corr} Variance-shift (global quadratic test).} 
Define, for each $\ell\in[m]$, $S_\ell=\sum_{u,v \in [k]}(\Sigma_\ell)_{uv}$ and $\calE_\ell=\norm{\Sigma_\ell}_F^2$, so that $\nu_{\s{det}}=\sum_{\ell=1}^m S_\ell$ and $\calE=\max_{\ell} \calE_\ell$. 
Since $(\Sigma_\ell)_{uv} \ge 0$ for all $u,v \in [k]$, Cauchy--Schwarz yields, for every $\ell$,
\begin{align}
    S_\ell
    \le k \norm{\Sigma_\ell}_F
    = k\sqrt{\calE_\ell}. \label{eq:S_upper_bound}
\end{align}
Therefore,
\begin{align}
    \nu_{\det}=\sum_{\ell=1}^m S_\ell \le mk\sqrt{\calE}.
\end{align}
If the global quadratic test condition~\eqref{eq:quad_cond} holds, i.e. $\nu_{\s{det}}=\omega(n)$, then $mk\sqrt{\calE}=\omega(n)$, and hence
\begin{align}
    \calE=\omega\p{\frac{n^2}{m^2k^2}},
\end{align}
which proves~\eqref{eq:var_E_cond}. The risk bound $\s{R}(\calA_{\s{quad}})=o(1)$ follows from
Theorem~\ref{thrm:Detect_upper_variance}.

\paragraph{~\ref{item:var_scan_test} Variance-shift (finite-template scan).} 
Recall that 
\begin{align}
    \s{KL}(\Sigma_\ell) = \frac{1}{2} \sum_{u,v\in[k]} \p{ (\Sigma_\ell)_{uv} - \log(1+(\Sigma_\ell)_{uv})}.
\end{align}
Under the bounded-variance assumption, $0 \le (\Sigma_\ell)_{uv} \le \vartheta_0 < 1$. For $0\le x\le \vartheta_0$, define $f(x)\triangleq x-\log(1+x)$. Then $f(0)=f'(0)=0$ and $f''(x)=\frac{1}{(1+x)^2} \in \pp{\frac{1}{(1+\vartheta_0)^2}, 1}$. By Taylor's theorem, for each $x\in[0,\vartheta_0]$,
\begin{align}
    \frac{1}{2(1+\vartheta_0)^2}x^2 
    \le f(x) 
    \le \frac{1}{2} x^2.
\end{align}
Applying this bound entrywise yields
\begin{align}
    \frac{1}{4(1+\vartheta_0)^2} \norm{\Sigma_\ell}_F^2
    \le \s{KL}(\Sigma_\ell)
    \le \frac{1}{4} \norm{\Sigma_\ell}_F^2.
\end{align}
Hence,
\begin{align}
    \max_{\ell\in[m]}\s{KL}(\Sigma_\ell) =\Theta\p{\max_{\ell\in[m]} \norm{\Sigma_\ell}_F^2} 
    =\Theta(\calE).
\end{align}
The scan condition in Theorem~\ref{thrm:Detect_upper_variance} is therefore equivalent to
\begin{align}
    \calE=\omega(\log|\calB|+\log m),
\end{align}
and the risk bound follows from Theorem~\ref{thrm:Detect_upper_variance}.

\subsection{Proof of Theorem \ref{thrm:lower_bound}
}

We use a single second-moment bound that applies to both the non-consecutive and circular consecutive models, and then plug in the corresponding overlap estimates for each placement family.

\paragraph{Likelihood ratio and second moment.}
In order to lower bound the optimal risk, we apply the second-moment method, which reduces the problem to bounding the second moment of the likelihood ratio under $\calH_0$. In particular, 
\begin{align}
    d_{\s{TV}}(\P_{\calH_0}, \P_{\calH_1}) \le \frac{1}{2} \sqrt{\chi^2(\P_{\calH_1} \Vert \P_{\calH_0})} = \frac{1}{2} \sqrt{\E_{\calH_0}\pp{\s{L}_n(\s{X})^2} - 1}.
\end{align}
Thus, it suffices to show that $\E_{\calH_0}\pp{\s{L}n(\s{X})^2}\le 1+o(1)$, which implies $\chi^2(\P{\calH_1}\Vert\P_{\calH_0})=o(1)$ and hence $d_{\s{TV}}(\P_{\calH_0},\P_{\calH_1})=o(1)$.

Recall that under $\calH_0$, the entries $\{\s{X}_{ij}\}_{i,j \in [n]}$ are independent with common density $\calQ$, that is $\P_{\calH_0} = \calQ^{\otimes n^2}$. Under $\calH_1$, we draw a random set $\s{K}\in \calK$ of $m$ disjoint blocks (either $\calK_{k,m,n}$, $\calK_{k,m, n}^{\s{con}}$ or $\calK_{k,m,n}^{\circ}$as defined in~\eqref{eq:block_set_con}-\eqref{eq:block_set_circ}). 
Given $\s{K}$, a uniform random bijection $\beta: \s{K} \to [m]$ assigns a label to each block, so that $\beta(\s{B}) = \ell$ indicates that block $\s{B}$ carries template $\ell$. Thus,
\begin{align}
    \P_{\calH_1} = \E_{\s{K}, \beta}\pp{\P_{\s{K}, \beta}}.
\end{align}
Conditional on $(\s{K}, \beta)$, the entries remain independent, and for each $(i,j) \in [n]^2$,
\begin{align}
    \s{X}_{ij} \sim 
    \begin{cases}
        \calP_{\beta(\s{B}), \varphi_{\s{B}}(i,j)}, 
        & \textnormal{if } (i,j) \in \s{B} \text{ for some } \s{B} \in \s{K},\\
        \calQ, 
        & \textnormal{otherwise}.
    \end{cases}
\end{align}
Here, for each block $\s{B}\in\s{K}$, 
$\varphi_{\s{B}}: \s{B} \to [k] \times [k]$ denotes the deterministic coordinate map defined in \eqref{eq:induced_coord_map}, and $\{\calP_{\ell,u}\}_{\ell \in [m],\, u \in [k] \times [k]}$ is the unified family of signal densities introduced in Section~\ref{sec:prob_form}.
We assume that
\begin{align}
    \calP_{\ell, u} \ll \calQ,
    \qquad 
    \textnormal{for all } \ell \in [m],\ u \in [k] \times [k],
\end{align}
so that likelihood ratios are well-defined.
For each $\ell \in [m]$ and $u \in [k] \times [k]$, define the entrywise likelihood ratio
\begin{align}
    \s{L}_{\ell, u}(x) 
    \triangleq 
    \frac{\calP_{\ell, u}(x)}{\calQ(x)},
    \qquad
    \textnormal{so that } \qquad
    \E_{\calH_0}\pp{\s{L}_{\ell, u}(\s{X}_{ij})} = 1.
\end{align}

Then the conditional likelihood ratio (the Radon--Nikodym derivative of $\P_{\s{K}, \beta}$ with respect to $\P_{\calH_0}$) is
\begin{align}
    \s{L}(\s{X} \vert \s{K}, \beta) 
    = 
    \prod_{\s{B} \in \s{K}} 
    \prod_{(i,j) \in \s{B}} 
    \s{L}_{\beta(\s{B}), \varphi_{\s{B}}(i,j)}(\s{X}_{ij}),
    \label{eq:conditional_LR}
\end{align}
and the mixture likelihood ratio is
\begin{align}
    \s{L}_n(\s{X}) 
    \triangleq 
    \frac{\P_{\calH_1}}{\P_{\calH_0}}(\s{X})
    = 
    \E_{\s{K}, \beta} \pp{\s{L}(\s{X} \vert \s{K}, \beta)}.
\end{align}

\paragraph{General second moment reduction.}
We are now in a position to compute the second moment. Let $(\s{K}', \beta')$ be an independent copy of $(\s{K}, \beta)$. Then, by Fubini's theorem,

\begin{align}
    \E_{\calH_0}\pp{ \s{L}_n^2(\s{X})} 
    &= \E_{(\s{K}, \beta) \indep (\s{K}' , \beta')} \pp{\s{L}\p{\s{X} \vert \s{K}, \beta} \s{L}\p{\s{X} \vert \s{K}', \beta'}}.
\end{align}
Since under $\P_{\calH_0}$ the entries $\{\s{X}_{ij}\}_{i,j \in [n]}$ are independent, the inner expectation factorizes entry-wise. All coordinates $(i,j)$ outside the overlap of the planted unions contribute factor $1$, since $\E_{\calH_0}[\s{L}_{\ell,u}(\s{X}_{ij})]=1$ for all $(\ell,u)$. Only indices $(i,j)$ that lie in both planted unions contribute non-trivially. Define the overlap set and its size by
\begin{align}
    \s{K} \cap \s{K}' \triangleq \p{\bigcup_{\s{B} \in \s{K}}\s{B}} \bigcap \p{\bigcup_{\s{B}' \in \s{K}'}\s{B}'} = \bigcup_{(\s{B},\s{B}')\in \s{K} \times \s{K}'} \s{B} \cap \s{B}', \qquad \s{H} = \abs{\s{K} \cap \s{K}'},
\end{align}
Since blocks are disjoint within each planted collection, for each $(i,j)\in \s{K}\cap\s{K}'$ there exist unique blocks $\s{B}\in\s{K}$ and $\s{B}'\in\s{K}'$ such that $(i,j)\in \s{B}\cap\s{B}'$.
Therefore,

\begin{align}
    \E_{\calH_0}\pp{ \s{L}_n^2(\s{X})} = \E_{(\s{K}, \beta) \indep (\s{K}', \beta')} \pp{\prod_{(\s{B},\s{B}')\in \s{K} \times \s{K}'} \prod_{(i,j)\in\s{B}\cap\s{B}'} \rho\p{\beta(\s{B}), \beta'(\s{B}'); \varphi_{\s{B}}\p{i,j} \varphi_{\s{B}'}\p{i,j}}},
\end{align}
where
\begin{align}
    \rho(\ell,\ell'; u, u')
    \triangleq 
    \E_{\s{Z}\sim\calQ}\pp{\s{L}_{\ell,u}(\s{Z})\,\s{L}_{\ell',u'}(\s{Z})}.
\end{align}

\begin{lemma}[Cauchy--Schwarz domination of the overlap factor]
\label{lem:Cauchy_Schwarz_overlap}
Assume $\calP_{\ell,u} \ll \calQ$ for all $\ell \in [m]$, $u \in [k] \times [k]$, and write $\chi^2_{\ell,u}\triangleq \chi^2(\calP_{\ell,u}\Vert\calQ)$. 
Then, for all $(\ell,u),(\ell',u')$,
\begin{align}
    \rho(\ell,\ell'; u, u') 
    \le 
    \exp\ppp{\frac{1}{2}\p{\chi^2_{\ell, u} + \chi^2_{\ell', u'}}}.
\end{align}
\end{lemma}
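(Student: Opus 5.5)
The plan is a Cauchy--Schwarz step in $L^2(\calQ)$, followed by the elementary inequality $1+x\le e^x$ and AM--GM. First, I would express each factor through the second moment of an entrywise likelihood ratio. Since $\s{L}_{\ell,u}=\mathrm{d}\calP_{\ell,u}/\mathrm{d}\calQ$ and $\E_{\calQ}[\s{L}_{\ell,u}]=1$, expanding the square in the definition of the chi-square divergence gives
\begin{align}
    \E_{\s{Z}\sim\calQ}\pp{\s{L}_{\ell,u}(\s{Z})^2}
    =
    1+\chi^2_{\ell,u}.
\end{align}
This holds whenever $\chi^2_{\ell,u}<\infty$. If either divergence is infinite, the right-hand side of the claimed bound is $+\infty$, and there is nothing to prove.

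Next, I would apply Cauchy--Schwarz under $\calQ$ to the product $\s{L}_{\ell,u}(\s{Z})\s{L}_{\ell',u'}(\s{Z})$. Both factors are nonnegative, so the expectation $\rho$ is well defined, possibly as $+\infty$. This gives
\begin{align}
    \rho(\ell,\ell';u,u')
    \le
    \sqrt{\p{1+\chi^2_{\ell,u}}\p{1+\chi^2_{\ell',u'}}}.
\end{align}
Then I would use $1+x\le e^{x}$ for $x\ge 0$ on each factor under the square root. This yields $\sqrt{e^{\chi^2_{\ell,u}}e^{\chi^2_{\ell',u'}}}=\exp\ppp{\frac12\p{\chi^2_{\ell,u}+\chi^2_{\ell',u'}}}$, which is exactly the claimed bound.

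There is no real obstacle here. The only point needing care is the bookkeeping for measure-theoretic well-definedness, namely absolute continuity and the possibly infinite divergences, which the assumption $\calP_{\ell,u}\ll\calQ$ together with nonnegativity of the integrand handles. The Cauchy--Schwarz bound in geometric-mean form is slightly stronger than the stated one; the exponential form is chosen because it factorizes over the entries of the overlap in the subsequent second-moment computation.
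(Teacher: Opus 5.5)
Your proposal is correct and follows essentially the same route as the paper: Cauchy--Schwarz in $L^2(\calQ)$ to get $\rho\le\sqrt{(1+\chi^2_{\ell,u})(1+\chi^2_{\ell',u'})}$, then $1+x\le e^x$ (equivalently, $\log(1+x)\le x$) on each factor. Your extra remark about infinite divergences is a harmless refinement. The ``AM--GM'' you mention is never actually used, since $\sqrt{e^{a}e^{b}}=e^{(a+b)/2}$ is an identity.
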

\begin{proof}
By Cauchy--Schwarz,
\begin{align}
    \rho(\ell,\ell';u,u')
    &=
    \int \s{L}_{\ell,u}(x)\s{L}_{\ell',u'}(x)\,\calQ(x)\mathrm{d}x \\
    &\le
    \p{\int \s{L}_{\ell,u}^2(x)\calQ(x)\mathrm{d}x}^{1/2}
    \p{\int \s{L}_{\ell',u'}^2(x)\calQ(x)\mathrm{d}x}^{1/2} \\
    &=
    \sqrt{1+\chi^2_{\ell,u}}\sqrt{1+\chi^2_{\ell',u'}} \\
    &\le
    \exp\ppp{\frac{1}{2}\chi^2_{\ell,u}+\frac{1}{2}\chi^2_{\ell',u'}},
\end{align}
using $\log(1+x)\le x$ for $x\ge0$. 
\end{proof}

Applying Lemma~\ref{lem:Cauchy_Schwarz_overlap} and then Cauchy--Schwarz to separate the contributions of $\beta$ and $\beta'$ yields
\begin{align}
    \E_{\calH_0}\pp{\s{L}^2_n(\s{X})} 
    &\le 
    \E_{(\s{K}, \beta) \indep \s{K}'}\pp{\exp\ppp{ \sum_{(\s{B},\s{B}') \in \s{K}\times\s{K}'}  \sum_{(i,j)\in \s{B} \cap \s{B}'} {\chi^2_{\beta(\s{B}), \varphi_{\s{B}}(i,j)}} }} 
\end{align}

For each block-pair $(\s{B},\s{B}')$, define
\begin{align}
    \s{S}_{\s{B},\s{B}'}(\beta)
    \triangleq
    \sum_{(i,j)\in\s{B}\cap\s{B}'}
    \chi^2_{\beta(\s{B}), \varphi_{\s{B}}(i,j)}.
\end{align}

Let
\begin{align}
    \calF \triangleq \sigma\p{\s{K}, \ppp{\s{H}_{\s{B},\s{B}'}: \s{B}\in\s{K}, \s{B}' \in \s{K}'}}, \qquad \s{H}_{\s{B},\s{B}'} \triangleq \abs{\s{B} \cap \s{B}'}.\label{eq:sigma_field}
\end{align}

Conditioning on $\calF$ and applying conditional H\"older (e.g. \cite[Ch.~4]{durrett2019probability}) with exponent $q=m^2$ gives
\begin{align}
    \E_{\beta \indep \s{K}'\vert \calF}\pp{\left. \exp\ppp{\sum_{(\s{B}, \s{B}') \in \s{K} \times \s{K}'} \s{S}_{\s{B},\s{B}'}(\beta)}     \right|\calF} 
    \le \prod_{(\s{B}, \s{B}')\in\s{K}\times \s{K}'}
    \p{ \E_{\beta \indep \s{K}' \vert \calF }\pp{\left.e^{q\,\s{S}_{\s{B},\s{B}'}(\beta)}\right|\calF}
    }^{1/q}. \label{eq:cond_holder_step}
\end{align}
Since $\beta$ is a uniform bijection $\s{K}\to[m]$, the random label $\beta(\s{B})$ of each fixed $\s{B}\in\s{K}$ is uniform on $[m]$, hence for each block-pair $(\s{B},\s{B}')$,
\begin{align}
    \E_{\beta \indep \s{K}' \vert \calF}\pp{\left.e^{q\,\s{S}_{\s{B},\s{B}'}(\beta)}\right|\calF}
    &=
    \frac{1}{m}\sum_{\ell=1}^m
    \E_{\s{K}' \vert \calF}\pp{\left.        \exp\ppp{ q \sum_{(i,j)\in\s{B}\cap\s{B}'}     \chi^2_{\ell,\varphi_{\s{B}}(i,j)} }  \right|\calF}. \label{eq:beta_average}
\end{align}

For each block-pair $(\s{B},\s{B}')$, we define the random index set
\begin{align}
    \s{U}_{\s{B},\s{B}'} \triangleq \ppp{\varphi_{\s{B}}(i,j): (i,j)\in\s{B}\cap\s{B}'} \subset [k] \times [k], \quad \text{such that }  \abs{\s{U}_{\s{B},\s{B}'}} = \s{H}_{\s{B}, \s{B}'}. 
    \label{eq:U_BB_def}
\end{align}
 

\begin{lemma}[Conditional exchangeability of overlap coordinates] \label{lem:conditional_uniform_inblock}
Fix a block pair $(\s{B},\s{B}') \in \s{K}\times\s{K}'$ under either the non-consecutive placement model or the circular consecutive placement model, and let $\calF$ be defined as in~\eqref{eq:sigma_field}. Then, conditional on $\calF$, for all $u,u'\in[k] \times [k]$,
\begin{align}
    \P\ppp{u \in \s{U}_{\s{B},\s{B}'} \vert \calF}
    =
    \P\ppp{u' \in \s{U}_{\s{B},\s{B}'} \vert \calF}.
    \label{eq:cond_inclusion_equal}
\end{align}
In particular, for all $u\in[k] \times [k]$,
\begin{align}
    \E\pp{\Ind\ppp{u \in \s{U}_{\s{B},\s{B}'}} \vert \calF}
    =
    \frac{\s{H}_{\s{B},\s{B}'}}{k^2}.
    \label{eq:cond_inclusion_value}
\end{align}
Consequently, for any deterministic function $f:[k] \times [k]\to\R$,
\begin{align}
    \E\pp{\left.
        \sum_{u\in\s{U}_{\s{B},\s{B}'}} f(u)
    \right|\calF}
    =
    \frac{\s{H}_{\s{B},\s{B}'}}{k^2}
    \sum_{u\in[k] \times [k]} f(u).
    \label{eq:cond_avg_f}
\end{align}
\end{lemma}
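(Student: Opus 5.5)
The plan is a symmetry argument. I would construct a family of bijections $\Phi$ on the space of configurations $\s{K}'$ such that: (a) $\Phi$ preserves the law of $\s{K}'$ (uniform on $\calK_{k,m,n}$ or $\calK^{\circ}_{k,m,n}$, and independent of $(\s{K},\beta)$); (b) $\Phi$ leaves every generator of $\calF$ unchanged, namely $\s{K}$ and all overlap sizes $\s{H}_{\s{B}_0,\s{B}'}$; (c) $\Phi$ acts on the overlap coordinates of the fixed pair by $\s{U}_{\s{B},\Phi(\s{B}')}=g(\s{U}_{\s{B},\s{B}'})$, where $g$ ranges over a group acting transitively on $[k]\times[k]$. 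Given such maps, the conditional law of $\s{U}_{\s{B},\s{B}'}$ given $\calF$ is $g$-invariant, and transitivity gives \eqref{eq:cond_inclusion_equal}. Then \eqref{eq:cond_inclusion_value} follows by summing over $u$, since $\sum_u \Ind\{u\in\s{U}_{\s{B},\s{B}'}\}=\s{H}_{\s{B},\s{B}'}$ is $\calF$-measurable, so each of the $k^2$ equal conditional inclusion probabilities equals $\s{H}_{\s{B},\s{B}'}/k^2$. Finally, \eqref{eq:cond_avg_f} follows from linearity by writing $\sum_{u\in\s{U}}f(u)=\sum_{u}f(u)\Ind\{u\in\s{U}\}$.

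For the non-consecutive model, write $\s{B}=\s{S}\times\s{T}$ with $\s{S}=\{s_1<\dots<s_k\}$ and $\s{T}=\{t_1<\dots<t_k\}$. I would take $\sigma$ to be the cyclic permutation $s_a\mapsto s_{a+1 \bmod k}$ on $\s{S}$, extended by the identity outside $\s{S}$, and $\tau$ the analogous permutation on $\s{T}$, and let $\Phi=\sigma^p\times\tau^q$ act entrywise on $\s{K}'$. Row and column permutations map disjoint $k\times k$ blocks to disjoint $k\times k$ blocks bijectively, so the uniform law on $\calK_{k,m,n}$ is preserved. This gives (a). The map fixes $\s{B}$ setwise and shifts $\varphi_{\s{B}}$-coordinates by $(p,q)$ modulo $k$, which is transitive on $[k]\times[k]$; this gives (c). For the circular consecutive model, the analogous choice is a cyclic relabeling of positions inside $\s{B}$, but translations do not fix $\s{B}$. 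There I would instead combine the translation invariance of $\calK^{\circ}_{k,m,n}$ with the fact that $\s{B}\cap\s{B}'$ is itself a product of two cyclic intervals whose position inside $\s{B}$ is determined by the offset of $\s{B}'$ relative to $\s{B}$. I would then average over the offsets that are compatible with the conditioning.

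The main obstacle is (b). The permutations $\sigma,\tau$ fix $\s{B}$ but may move other blocks $\s{B}_0\in\s{K}$ that share rows with $\s{S}$ or columns with $\s{T}$. Since $|\s{B}_0\cap\Phi(\s{B}')|=|\Phi^{-1}(\s{B}_0)\cap\s{B}'|$, the overlaps $\s{H}_{\s{B}_0,\s{B}'}$ with the other blocks of $\s{K}$ need not be preserved. My first attempt would be to show that the only part of $\calF$ that actually constrains $\s{U}_{\s{B},\s{B}'}$ is the pair overlap $\s{H}_{\s{B},\s{B}'}$. I would try to do this by conditioning additionally on $\s{K}'\setminus\{\s{B}'\}$ and on the sets $\s{S}'\setminus\s{S}$ and $\s{T}'\setminus\s{T}$, and using only permutations supported on $\s{S}\cap\s{S}'$-compatible rows that leave the other blocks' overlaps unchanged.

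If that route fails, I would weaken the conditioning. The subsequent use in \eqref{eq:beta_average} only needs the statement for each pair with $\calF$ replaced by $\sigma(\s{K},\s{H}_{\s{B},\s{B}'})$, for which (b) holds trivially. I would then check that the H\"older step \eqref{eq:cond_holder_step} can be applied factor by factor with that smaller $\sigma$-field.
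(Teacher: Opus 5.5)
\paragraph{A gap: the lemma is false as stated.} Your worry about (b) is the weak point, and it cannot be repaired, because the lemma fails for the $\sigma$-field $\calF$. The paper's proof has the same defect. It asserts that some relabeling inside $\s{B}$ leaves $\calF$ unchanged, and, in the circular case, that the law of $\s{U}_{\s{B},\s{B}'}$ is invariant under cyclic shifts of template coordinates; neither holds.

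In the non-consecutive model take $k=m=2$, $n\ge 4$, and $\s{K}=\{\s{B},\s{B}_0\}$ with $\s{B}=\{1,2\}\times\{1,2\}$ and $\s{B}_0=\{2,3\}\times\{3,4\}$. These are disjoint because their column sets are. Condition on the positive-probability $\calF$-event $\{\s{H}_{\s{B},\s{B}'}=1,\ \s{H}_{\s{B}_0,\s{B}'}=2\}$. The first equality forces $|\s{T}'\cap\{1,2\}|=1$, hence $|\s{T}'\cap\{3,4\}|\le 1$, so the second forces $\s{S}'=\{2,3\}$. Thus $\s{U}_{\s{B},\s{B}'}\subseteq\{(2,1),(2,2)\}$ and $\P\{(1,1)\in\s{U}_{\s{B},\s{B}'}\mid\calF\}=0\neq 1/4$. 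Your first route, that only the pair overlap constrains $\s{U}_{\s{B},\s{B}'}$, therefore fails. Overlaps with blocks of $\s{K}$ that partially share rows or columns with $\s{B}$ reveal which template rows and columns are hit.

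In the circular model the statement fails even for $m=1$, conditioning only on $\s{H}_{\s{B},\s{B}'}$. When $n\ge 2k$, two cyclic intervals of length $k$ meet in an initial or final segment of each. So for $\s{B}=[k]\times[k]$, the set $\s{U}_{\s{B},\s{B}'}$ is always a rectangle containing a corner of $[k]\times[k]$. For $k\ge 3$ and $\s{H}_{\s{B},\s{B}'}=1$ it is one of the four corners, so $(2,2)$ has conditional inclusion probability $0$, not $1/k^2$. Averaging over compatible offsets, as you propose, cannot change that.

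\paragraph{What your fallback gives.} In the non-consecutive model your fallback does prove something true. Conditionally on $\sigma(\s{K},\s{H}_{\s{B},\s{B}'})$, your maps $\sigma^p\times\tau^q$ satisfy (a)--(c); this amounts to the fact that, given their sizes, $\s{S}\cap\s{S}'$ and $\s{T}\cap\s{T}'$ are uniform subsets of $\s{S}$ and $\s{T}$. But that is a different lemma, and it does not plug into \eqref{eq:cond_holder_step} ``factor by factor''. Conditional H\"older needs one $\sigma$-field for all $m^2$ factors, and a bound on $\E[\,\cdot\mid\sigma(\s{K},\s{H}_{\s{B},\s{B}'})]$ gives no bound on $\E[\,\cdot\mid\calF]$. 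Using the common field $\sigma(\s{K})$ instead yields $\prod_{(\s{B},\s{B}')}(\E[e^{m^2\Theta^\star\s{H}_{\s{B},\s{B}'}}\mid\s{K}])^{1/m^2}$ rather than $\E\exp\{\Theta^\star\sum\s{H}_{\s{B},\s{B}'}\}$. That changes the subsequent overlap computation and the resulting conditions. In the circular model even the weakened lemma is unavailable.

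What does hold with no exchangeability is the pointwise bound $\s{S}_{\s{B},\s{B}'}(\ell)\le \s{H}_{\s{B},\s{B}'}\max_u\chi^2_{\ell,u}$. This is \eqref{eq:blockpair_exp_bound} with $\theta_\ell$ replaced by $q\max_u\chi^2_{\ell,u}$. It is all the proof of Corollary~\ref{cor:smooth_signal} actually uses, since that proof immediately bounds $\Theta^\star\le\max_{\ell,u}\chi^2_{\ell,u}$.
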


\begin{proof}[Proof of Lemma \ref{lem:conditional_uniform_inblock}]
Fix $(\s{B},\s{B}')$ and condition on $\calF$, so that $\s{K}$ and all overlap sizes $\s{H}_{\s{B},\s{B}'}$ are fixed. We first prove~\eqref{eq:cond_inclusion_equal}.

\emph{Non-consecutive model.}
Conditional on $\calF$, the remaining randomness is the placement of $\s{K}'$ subject to the overlap sizes.
Under the non-consecutive placement rule, the conditional law does not privilege any specific location inside a fixed block $\s{B}$: for any $u,u'\in[k] \times [k]$, there exists a relabeling of the rows and columns inside $\s{B}$ that maps the event $\{u\in\s{U}_{\s{B},\s{B}'}\}$ to $\{u'\in\s{U}_{\s{B},\s{B}'}\}$ while leaving $\calF$ unchanged. Hence the conditional inclusion probabilities are equal.

\emph{Circular consecutive model.}
In the circular consecutive placement model, the planted row-interval and column-interval of $\s{B}'$ are generated by uniform starting points on the corresponding cycles. Conditional on $\calF$, the resulting law of the overlap location inside $\s{B}$ is invariant under simultaneous cyclic shifts of the $k$ template rows and of the $k$ template columns. Since such shifts act transitively on $[k] \times [k]$, the conditional inclusion probabilities are equal for all $u,u'\in[k] \times [k]$.

This proves~\eqref{eq:cond_inclusion_equal}. Now,
\begin{align}
    \sum_{u\in[k] \times [k]}\Ind\ppp{u\in\s{U}_{\s{B},\s{B}'}}
    = \abs{\s{U}_{\s{B},\s{B}'}}
    = \abs{\s{B}\cap\s{B}'}
    = \s{H}_{\s{B},\s{B}'}.
\end{align}
Taking conditional expectations and using~\eqref{eq:cond_inclusion_equal} yields
\begin{align}
    k^2\,\E\pp{\Ind\ppp{u\in\s{U}_{\s{B},\s{B}'}} \vert \calF}
    =
    \s{H}_{\s{B},\s{B}'},
\end{align}
which implies~\eqref{eq:cond_inclusion_value}. Finally, by linearity,
\begin{align}
    \E\pp{\left.
        \sum_{u\in\s{U}_{\s{B},\s{B}'}} f(u)
    \right|\calF}
    &=
    \sum_{u\in[k] \times [k]} f(u)\,
    \E\pp{\Ind\ppp{u\in\s{U}_{\s{B},\s{B}'}} \vert \calF}
    =
    \frac{\s{H}_{\s{B},\s{B}'}}{k^2}
    \sum_{u\in[k] \times [k]} f(u),
\end{align}
proving~\eqref{eq:cond_avg_f}.
\end{proof}

Fix a block pair $(\s{B},\s{B}')$ and a label $\ell\in[m]$. Recall that $q=m^2$ and
\begin{align}
    \s{S}_{\s{B},\s{B}'}(\ell)
    =
    \sum_{u\in \s{U}_{\s{B},\s{B}'}} \chi^2_{\ell,u},
    \qquad
    \s{H}_{\s{B},\s{B}'} = |\s{U}_{\s{B},\s{B}'}|,
\end{align}
If $\s{H}_{\s{B},\s{B}'}=0$ then $\s{S}_{\s{B},\s{B}'}(\ell)=0$ and both sides below equal $1$. Assume henceforth that $\s{H}_{\s{B},\s{B}'}\ge 1$. Next, we apply Jensen's inequality for the convex function $e^{qx}$:
\begin{align}
    \exp\ppp{q \s{S}_{\s{B}, \s{B}'}} 
    = 
    \exp\ppp{q \sum_{u \in \s{U}_{\s{B},\s{B}'}} \chi^2_{\ell, u}} 
    \le 
    \frac{1}{\s{H}_{\s{B},\s{B}'}}\sum_{u \in \s{U}_{\s{B},\s{B}'}} 
    \exp\ppp{q \s{H}_{\s{B},\s{B}'}\chi^2_{\ell,u}}, \label{eq:jensen_blockpair}
\end{align}
where we use the convention $\frac{1}{|\calU|}\sum_{u\in\calU}g(u)=1$ when $\calU=\emptyset$. 

Taking conditional expectation with respect to $\s{K}'$ given $\calF$, and using Lemma~\ref{lem:conditional_uniform_inblock} with $f(u)=\exp(q\s{H}_{\s{B},\s{B}'}\chi^2_{\ell,u})$, yields
\begin{align}
    \E_{\s{K}' \vert \calF}\pp{\exp\ppp{q \s{S}_{\s{B},\s{B}'}(\ell)}}
    \le
    \frac{1}{k^2}\sum_{u\in[k] \times [k]}\exp\ppp{q\s{H}_{\s{B},\s{B}'}\chi^2_{\ell,u}}
    =
    A_\ell\p{\s{H}_{\s{B},\s{B}'}},
    \label{eq:A_ell_def}
\end{align}
where the deterministic function $A_{\ell}(h)$ is defined for $h \in [0,k^2]$ as follows
\begin{align}
    A_{\ell}(h) \triangleq \frac{1}{k^2} \sum_{u \in [k] \times [k]} e^{q h \chi^2_{\ell, u}}.
\end{align}

\begin{lemma}[Log-convex interpolation] \label{lem:Log_inequality}
    For each $\ell \in [m]$, the function $h\mapsto \log A_\ell(h)$ is convex on $[0,k^2]$ and satisfies $A_\ell(0)=1$. In particular, for every $h\in[0,k^2]$
    \begin{align}
        \log A_{\ell}(h) \le \frac{h}{k^2} \log A_{\ell}(k^2), \quad \text{equivalently} \quad A_{\ell}(h)\le \exp\ppp{\theta_{\ell}h}, \label{eq:Log_inequality}
    \end{align}
    where
    \begin{align}
        \theta_{\ell} \triangleq \frac{1}{k^2} \log A_{\ell}(k^2) = \frac{1}{k^2} \log \p{\frac{1}{k^2} \sum_{u \in [k] \times [k]} \exp\ppp{q k^2 \chi^2_{\ell,u}}}. \label{eq:theta_ell} 
    \end{align}
\end{lemma}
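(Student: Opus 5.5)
The convexity of $h\mapsto\log A_\ell(h)$ will follow from recognizing $A_\ell$ as a moment generating function. Set $a_u\triangleq q\chi^2_{\ell,u}\ge 0$. Then
\begin{align}
    A_\ell(h)=\frac{1}{k^2}\sum_{u\in[k]\times[k]} e^{h a_u}=\E_{\s{U}}\pp{e^{h a_{\s{U}}}},
\end{align}
where $\s{U}$ is uniform on $[k]\times[k]$. Thus $\log A_\ell$ is the cumulant generating function of the bounded random variable $a_{\s{U}}$, and such functions are convex.

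To verify convexity directly I would use H\"older's inequality. For $h_1,h_2\in[0,k^2]$ and $t\in[0,1]$, apply H\"older with exponents $1/t$ and $1/(1-t)$:
\begin{align}
    \E\pp{e^{(th_1+(1-t)h_2)a_{\s{U}}}}
    =\E\pp{\p{e^{h_1a_{\s{U}}}}^t\p{e^{h_2a_{\s{U}}}}^{1-t}}
    \le \p{\E e^{h_1a_{\s{U}}}}^t\p{\E e^{h_2a_{\s{U}}}}^{1-t}.
\end{align}
Taking logarithms gives $\log A_\ell(th_1+(1-t)h_2)\le t\log A_\ell(h_1)+(1-t)\log A_\ell(h_2)$. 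As an alternative check, one can differentiate twice: $(\log A_\ell)''(h)$ equals the variance of $a_{\s{U}}$ under the tilted weights proportional to $e^{ha_u}$, which is nonnegative. The boundary value $A_\ell(0)=\frac{1}{k^2}\cdot k^2=1$ is immediate, so $\log A_\ell(0)=0$.

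The chord bound \eqref{eq:Log_inequality} then comes from convexity on $[0,k^2]$. Write $h=(1-t)\cdot 0+t\cdot k^2$ with $t=h/k^2\in[0,1]$. Convexity gives
\begin{align}
    \log A_\ell(h)\le (1-t)\log A_\ell(0)+t\log A_\ell(k^2)=\frac{h}{k^2}\log A_\ell(k^2)=\theta_\ell h.
\end{align}
Exponentiating yields $A_\ell(h)\le e^{\theta_\ell h}$, with $\theta_\ell$ as in \eqref{eq:theta_ell} because $q=m^2$.

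No step here is a real obstacle. The only point needing care is the degenerate case $t\in\{0,1\}$ in H\"older's inequality, which is trivial, and noting that $A_\ell$ is finite because the sum has finitely many terms.
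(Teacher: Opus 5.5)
Your proposal is correct and follows essentially the same route as the paper. The paper gets convexity of $h\mapsto\log A_\ell(h)$ by observing that it is a log-sum-exp of the affine maps $h\mapsto h\,q\chi^2_{\ell,u}$; your H\"older / cumulant-generating-function argument is the standard proof of that fact. It then applies the same chord inequality between $h=0$, where $\log A_\ell(0)=0$, and $h=k^2$.
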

\begin{proof}[Proof of Lemma \ref{lem:Log_inequality}]
    Let $x_{\ell,u} \triangleq \exp\ppp{q \chi^2_{\ell,u}} > 0$. Then $A_{\ell} = \frac{1}{k^2} \sum_{u \in [k] \times [k]} x_{\ell,u}^h$, and $h\mapsto \log\sum_{u} x_{\ell,u}^{\,h}$ is convex as a log-sum-exp of affine functions $h\log x_{\ell,u}$. Hence $h\mapsto \log A_\ell(h)$ is convex as well. Since $A_\ell(0)=1$, convexity implies
    \begin{align}
        \log A_{\ell}(h) \le \frac{h}{k^2}\log A_{\ell}(k^2) + \p{1 - \frac{h}{k^2}}\log A_{\ell}(0) = \frac{h}{k^2}\log A_{\ell}(k^2).
    \end{align}
\end{proof}

Combining~\eqref{eq:A_ell_def} with Lemma~\ref{lem:Log_inequality} gives, for every $h\in[0,k^2]$,
\begin{align}
    \E_{\s{K}' \vert \calF}\pp{\exp\ppp{q \s{S}_{\s{B},\s{B}'}(\ell)}\vert \calF}
    \le A_\ell\p{\s{H}_{\s{B},\s{B}'}}
    \le \exp\ppp{\theta_\ell\,\s{H}_{\s{B},\s{B}'}}. \label{eq:blockpair_exp_bound}
\end{align}
Recall the definition of the effective $\chi^2$ energy in~\eqref{eq:Theta_star_def} 
\begin{align}
    \Theta^\star 
    \triangleq \max_{\ell\in[m]} \frac{\theta_{\ell}}{q} 
    = \max_{\ell \in [m]} \frac{1}{qk^2} \log\p{\frac{1}{k^2} \sum_{u \in [k] \times [k]} \exp\ppp{q k^2 \chi^2_{\ell, u}}}, \qquad q=m^2. \label{eq:theta_star}
\end{align}
Substituting~\eqref{eq:blockpair_exp_bound} into~\eqref{eq:beta_average} yields 
\begin{align}
    \E_{\beta \indep \s{K}' \vert \calF}\pp{\left.
        e^{q\s{S}_{\s{B},\s{B}'}(\beta)}
    \right|\calF}
    &=
    \frac{1}{m}\sum_{\ell=1}^m
    \E_{\s{K}'\vert\calF}\pp{\exp\ppp{q\s{S}_{\s{B},\s{B}'}(\ell)}\vert\calF}
    \nonumber\\
    &\le
    \frac{1}{m}\sum_{\ell=1}^m
    \exp\ppp{\theta_\ell\,\s{H}_{\s{B},\s{B}'}}
    \le
    \exp\ppp{q\Theta^\star\,\s{H}_{\s{B},\s{B}'}}.
    \label{eq:blockpair_theta_star}
\end{align}

Plugging this bound into~\eqref{eq:cond_holder_step} and then taking expectation over $\calF$ gives
\begin{align}
    \E_{\calH_0}\pp{\s{L}_n^2(\s{X})}
    \le \E_{\calF}\pp{\exp\ppp{ \Theta^\star \sum_{\s{B}\in\s{K}} \sum_{\s{B}'\in\s{K}'} \abs{\s{B}\cap\s{B}'}}}.\label{eq:LR_bound_with_Theta_star}
\end{align}
By construction, the exponential term in~\eqref{eq:LR_bound_with_Theta_star} is $\calF$-measurable.

\subsubsection{Non-consecutive placements}
Recall that under the non-consecutive placements model, each block is constructed as $\s{B} = \s{S} \times \s{T}$. For each block-pair $(\s{B}, \s{B}') \in \s{K} \times \s{K}'$, each constructed as $\s{B} = \s{S}\times \s{T}$, $\s{B}' = \s{S}' \times \s{T}'$, we define their row and column overlap sizes 
\begin{align}
    \s{R}_{\s{B},\s{B}'} \triangleq \abs{\s{S} \cap \s{S}'}, \quad 
    \s{C}_{\s{B},\s{B}'} \triangleq \abs{\s{T} \cap \s{T}'}, \qquad \text{such that } \abs{\s{B}\cap\s{B}'} = \s{R}_{\s{B},\s{B}'} \s{C}_{\s{B},\s{B}'}.
\end{align}
For a fixed pair $(\s{B},\s{B}')$ with $\s{B}=\s{S}\times\s{T}$ and $\s{B}'=\s{S}'\times\s{T}'$, the overlap sizes satisfy
\begin{align}
    \s{R}_{\s{B},\s{B}'} \sim \s{Hypergeometric}(n,k,k),
    \qquad
    \s{C}_{\s{B},\s{B}'} \sim \s{Hypergeometric}(n,k,k).
\end{align}
Since the row and column sets are sampled without replacement, the corresponding coordinates are negatively associated \cite{joag1983negative}. As $x\mapsto e^{\Theta^\star x}$ is increasing, it follows that
\begin{align}
    \E_{\calF}\pp{\exp\ppp{\Theta^{\star} \sum_{(\s{B},\s{B}') \in \s{K}\times \s{K}'} \abs{\s{B}\cap\s{B}'} }} &\le \prod_{(\s{B}, \s{B}')\in\s{K} \times \s{K}'} \E\pp{\exp\ppp{\Theta^\star \s{R}_{\s{B},\s{B}'} \s{C}_{\s{B},\s{B}'}}} \\
    &= \p{\E\pp{\exp\ppp{\Theta^\star \s{R} \s{C}}}}^{m^2},
\end{align}
where $\s{R},\s{C} \sim \s{Hypergeometric}(n, k, k)$. We use that a $\s{Hypergeometric}(n,k,k)$ random variable is stochastically dominated by $\s{W} \sim \s{Binomial}(k,k/n)$; see, e.g., \cite{hoeffding1963probability}. Thus, letting $\s{W}'$ be an independent copy of $\s{W}$,
\begin{align}
    \E \pp{\exp\ppp{  \Theta^{\star} \s{R} \s{C}}} 
    = \E \pp{\exp\ppp{  \Theta^{\star} \s{W} \s{W}'}}
    = \E\pp{\p{1 + \frac{k}{n}\p{e^{ \Theta^{\star} \s{W}}  - 1}}^k}.
\end{align}
Assume $\Theta^{\star} \le \frac{1}{k}$. Then $0 \le \Theta^\star \s{W} \le 1$, and we use the bound $e^{x} - 1 \le x + x^2$ for $0 \le x \le 1$. This yields
\begin{align}
    \E \pp{\exp\ppp{  \Theta^{\star} \s{W} \s{W}'}} 
    &\le 
    \E\pp{\p{1 + \frac{k}{n}\p{\Theta^{\star} \s{W} + (\Theta^{\star})^2 \s{W}^2}}^k} \\
    &\le 
    \E\pp{\p{1 + 2 \frac{k}{n} \Theta^{\star} \s{W}}^k} \\
    &\le 
    \E\pp{e^{2\frac{k^2}{n} \Theta^{\star}\s{W}}} \\
    &= 
    \p{1 + \frac{k}{n}\p{e^{2 \frac{k^2}{n} \Theta^{\star}} - 1}}^{k}.
\end{align}
Therefore,
\begin{align}
    \E_{\calH_0}\pp{\s{L}_n^2(\s{X})} 
    \le 
    \p{1 + \frac{k}{n}\p{e^{2 \frac{k^2}{n} \Theta^{\star}} - 1}}^{k m^2}.
\end{align}
This is at most $1 + \delta$ provided 
\begin{align}
    \frac{k}{n}\p{e^{2 \frac{k^2}{n} \Theta^{\star}} - 1} \le (1 + \delta)^{\frac{1}{km^2}} - 1.
\end{align}
Since $(1 + \delta)^{\frac{1}{k m^2}} - 1 \ge \frac{\log(1+\delta)}{k m^2}$ this implied by
\begin{align}
    \Theta^{\star} \le \frac{n}{2k^2} \log \p{1 + \frac{
    n \log (1+\delta)}{ m^2k^2}}.
\end{align}
Combining this with the condition $\Theta^\star \le \frac{1}{k}$ yields, we obtain that the second-moment is at most $1+\delta$ if
\begin{align}
    \Theta^\star &\le \min\p{\frac{1}{k}, \frac{n}{2k^2} \log \p{1 + \frac{
    n \log (1+\delta)}{ m^2 k^2}}}
    \label{eq:universal_condition} \\ 
    &= \min\p{\frac{1}{k} , \frac{n^2 \log(1 + \delta)}{2 m^2 k^4}},
\end{align}
where the last equality holds when $\frac{n^2}{ m^2 k^2} = o(1)$. This establishes the claimed bound for the non-consecutive model.

\subsubsection{Circular consecutive placements}
The key distinction from the non-consecutive case lies in the distribution of the overlap size $|\s{B}\cap\s{B}'|$ under the circular consecutive placement family. Recall that $\s{B}=\s{S}\times\s{T}$ and $\s{B}'=\s{S}'\times\s{T}'$. For a fixed pair $(\s{B},\s{B}')$, we have
\begin{align}
    |\s{B}\cap\s{B}'|
    =
    |\s{S}\cap\s{S}'|\cdot|\s{T}\cap\s{T}'|.
\end{align}
Under the circular consecutive model, the row-interval $\s{S}'$ is generated by a uniform starting point on the cycle of length $n$ (and similarly for $\s{T}'$). Therefore, for $k\le n/2$,
\begin{align}
    \P\p{|\s{S}\cap\s{S}'| = z} = 
    \begin{cases}
        \frac{n - 2k + 1}{n}, & \text{for } z=0,\\[2pt]
        \frac{2}{n}, & \text{for } z=1, \ldots, k-1,\\[2pt]
        \frac{1}{n}, & \text{for } z=k,
    \end{cases}
    \label{eq:distribution_Z}
\end{align}
and the same distribution holds for $|\s{T}\cap\s{T}'|$.
Let $\s{Z}$ and $\s{Z}'$ be independent random variables with distribution \eqref{eq:distribution_Z}. Since $x\mapsto e^{\Theta^\star x}$ is increasing, an application of negative association yields 
\begin{align}
    \E_{\calF} \pp{\exp\ppp{\Theta^{\star} \sum_{(\s{B},\s{B}') \in \s{K}\times \s{K}'} \abs{\s{B}\cap\s{B}'} }} 
    &\le \prod_{(\s{B},\s{B}') \in\s{K}\times \s{K}'} \E\pp{\exp\ppp{\Theta^\star \abs{\s{B} \cap \s{B}'}}} \\
    &= \p{ \E\pp{\exp\ppp{\Theta^\star \s{Z}\s{Z}'}}}^{m^2}.
\end{align}
Next,
\begin{align}
    \E\pp{\exp\ppp{\Theta^\star \s{Z}\s{Z}'}} 
    &= \E_{\s{Z}'} \pp{\frac{n -2k +1 }{n} + \frac{2}{n} \sum_{z=1}^{k-1}e^{\Theta^\star z \s{Z}'} + \frac{e^{\Theta^\star k \s{Z}'}}{n}} \\
    &\le \E_{\s{Z}'}\pp{\frac{n-2k+1}{n} + \frac{2(k-1)}{n}e^{\Theta^\star k \s{Z}'} + \frac{e^{\Theta^\star k \s{Z}'}}{n}} \\
    &\le \E_{\s{Z}'}\pp{\frac{n-2k}{n} + \frac{2k}{n}e^{\Theta^\star k \s{Z}'}} \\
    &= \frac{n-2k}{n} + \frac{2k}{n}\,\E\pp{e^{\Theta^\star k \s{Z}'}} \\
    &= \frac{n-2k}{n} + \frac{2k}{n}\p{\frac{n -2k +1 }{n} + \frac{2}{n} \sum_{z=1}^{k-1}e^{\Theta^\star z k } + \frac{e^{\Theta^\star k^2}}{n}}\\
    &\le  \frac{n-2k}{n} + \frac{2k}{n}\p{\frac{n -2k}{n} + \frac{2k}{n}e^{\Theta^\star k^2}}\\
    &= 1 + \frac{4k^2}{n^2} \p{e^{\Theta^\star k^2} - 1}.
\end{align}
Therefore,
\begin{align}
    \p{ \E\pp{\exp\ppp{\Theta^\star \s{Z}\s{Z}'}}}^{m^2} \le \p{1 + \frac{4k^2}{n^2} \p{e^{\Theta^\star k^2} - 1}}^{m^2}.
\end{align}
This is at most $1 + \delta$ if
\begin{align}
    \frac{4k^2}{n^2} \p{e^{\Theta^\star k^2} - 1} \le (1+\delta)^{\frac{1}{m^2}} - 1.
\end{align}
Since $(1 + \delta)^{\frac{1}{m^2}} - 1 \ge \frac{\log(1+\delta)}{m^2}$, it suffices that
\begin{align}
    \Theta^\star &\le \frac{1}{k^2} \log\p{1 + \frac{n^2 \log (1+ \delta)}{4k^2 m^2}}.\label{eq:wrap_impossibility}
\end{align}

\subsection{Proof of Corollary~\ref{cor:standard_consecutive}} 
Let $\P_{\calH_1}^{\circ}$ denote the mixture alternative under the circular consecutive placement family, and let $\P_{\calH_1}^{\s{con}}$ denote the mixture alternative under the standard consecutive placement family. 
The two models differ only in the distribution of the planted support. Under the circular model, each row and column interval is generated by a uniform starting index in $[n]$, allowing wrap-around. Under the standard model, only the $n-k+1$ non-wrap starting positions are allowed.

For a one-dimensional circular interval with start $i\sim \s{Unif}([n])$, the interval wraps if and only if $i\in\{n-k+2,\ldots,n\}$. Hence,
\begin{align}
    \P(\text{wrap}) = \frac{k-1}{n}.
\end{align}
For a block $\s{B}=\s{S}\times\s{T}$ in the circular model, the row and column starts are independent and uniform on $[n]$. Therefore,
\begin{align}
    \P(\s{B}\text{ wraps}) \le \P(\s{S}\text{ wraps})+\P(\s{T}\text{ wraps})=\frac{2(k-1)}{n},
\end{align}
where $\s{B}$ is said to wrap if either its row interval or its column interval wraps around the boundary. 
Since the support contains $m$ blocks,
\begin{align}
    \P(\exists\, \text{wrapped block}) \le \frac{2m(k-1)}{n}.
\end{align}

We couple the two support distributions as follows. Sample $\s{K}^{\circ}$ from the circular placement family. If no block wraps, set $\s{K}^{\s{con}}=\s{K}^{\circ}$. Otherwise, sample $\s{K}^{\s{con}}$ independently from the standard placement family. Then
\begin{align}
    \P(\s{K}^{\s{con}}\neq \s{K}^{\circ}) \le \frac{2m(k-1)}{n}.
\end{align}

Conditional on a support $\s{K}$, both models draw a uniform bijection $\beta:\s{K}\to[m]$ and then generate $\s{X}$ with independent entries according to the same rule on $\bigcup_{\s{B}\in\s{K}}\s{B}$ and according to $\calQ$ elsewhere. On the event ${\s{K}^{\s{con}}=\s{K}^{\circ}}$ we use the same labeling $\beta$ and the same draw of $\s{X}$ in both models, and hence
\begin{align}
    d_{\s{TV}}\p{\P_{\calH_1}^{\circ},\P_{\calH_1}^{\s{con}}}\le \P(\s{K}^{\s{con}}\neq \s{K}^{\circ})
    \le \frac{2m(k-1)}{n}.
\end{align}
By Theorem~\ref{thrm:lower_bound}, if \eqref{eq:consecutive_impossibility} holds then
\begin{align}
    d_{\s{TV}}\p{\P_{\calH_0},\P_{\calH_1}^{\circ}}=o(1).
\end{align}
Using the triangle inequality,
\begin{align}
    d_{\s{TV}}\p{\P_{\calH_0},\P_{\calH_1}^{\s{con}}}
    \le 
    d_{\s{TV}}\p{\P_{\calH_0},\P_{\calH_1}^{\circ}}
    +
    d_{\s{TV}}\p{\P_{\calH_1}^{\circ},\P_{\calH_1}^{\s{con}}}
    \le 
    o(1)+\frac{2m(k-1)}{n}.
\end{align}
If $mk=o(n)$, the right-hand side is $o(1)$, completing the proof.

\subsection{Proof of Corollary~\ref{cor:smooth_signal}} 
We work under the problem formulation of Section~\ref{sec:prob_form} and assume the smooth-signal regime from Definition~\ref{def:smooth_signal}. For each $\ell\in[m]$ and $u\in[k]\times[k]$, let 
$\vartheta_{\ell,u}$ denote the local deviation from $\calQ$ 
(the mean in the mean-shift model and the variance shift in the variance-shift model). Recall the definition of $\Theta^\star$ in~\eqref{eq:Theta_star_def} and the energy definition in~\eqref{eq:energy_def}
\begin{align}
    \calE_\ell \triangleq \sum_{u\in[k] \times [k]} \vartheta_{\ell,u}^2,
    \qquad
    \calE \triangleq \max_{\ell\in[m]} \calE_\ell.
\end{align}
By Definition~\ref{def:smooth_signal}, there exist constants 
$\vartheta_0>0$ and $C_{\s{sp}}>0$, independent of $n$, 
such that for all sufficiently large $n$, for every 
$\ell \in [m]$ and $u \in [k] \times [k]$,
\begin{align}
    \abs{\vartheta_{\ell,u}} \le \vartheta_0, \qquad \max_{u\in[k] \times [k]} \vartheta_{\ell,u}^2\le C_{\s{sp}} \frac{\calE_\ell}{k^2}.
\end{align}
In the smooth-signal regime, Corollary~\ref{cor:smooth_signal}
states the impossibility conditions directly in terms of $\calE$,
whereas Theorem~\ref{thrm:lower_bound} is formulated in terms of
$\Theta^\star$. We therefore begin by relating $\Theta^\star$ to $\calE$.

\paragraph{Bound $\Theta^\star$ by the energy in the smooth-signal regime.}
As in the proof of Theorem~\ref{thrm:lower_bound}, we write $\chi^2_{\ell,u}\triangleq \chi^2(\calP_{\ell,u}\Vert\calQ)$. Fix a constant $C_\chi\ge 0$ and \emph{assume for the moment} that the following quadratic per-entry bound holds for all $\ell\in[m]$ and $u\in[k]\times[k]$.
\begin{align}\label{eq:chi_quad_assume}
    \chi^2_{\ell;u}\le C_\chi\,\vartheta_{\ell,u}^2.
\end{align}
Under this assumption, we show that in the smooth-signal regime,
\begin{align}\label{eq:Theta_energy_bound_proof}
    \Theta^\star \le C_\chi C_{\s{sp}}\frac{\calE}{k^2}.
\end{align}
Fix $\ell\in[m]$ and write $\chi^2_{\max}(\ell)\triangleq \max_{u\in[k]\times[k]}\chi^2_{\ell;u}$.
Since $\exp(\cdot)$ is increasing, for every $u\in[k] \times [k]$ we have
\begin{align}
    \exp\ppp{m^2 k^2 \chi^2_{\ell,u}} \le 
    \exp\ppp{m^2 k^2 \chi^2_{\max}(\ell)},
\end{align}
hence
\begin{align}
    \frac{1}{k^2} \sum_{u \in [k]\times [k]} \exp \ppp{m^2 k^2 \chi^2_{\ell,u}}\le 
    \exp \ppp{m^2 k^2 \chi^2_{\max}(\ell)}.
\end{align}
Taking $\log$ and dividing by $m^2k^2$ gives
\begin{align}
    \frac{1}{m^2 k^2} \log \p{\frac{1}{k^2} \sum_{u \in [k] \times [k]} \exp \ppp{m^2 k^2 \chi^2_{\ell,u}}} \le \chi^2_{\max}(\ell). \label{eq:theta_star_bound_lem}
\end{align}
Maximizing over $\ell$ yields the clean bound
\begin{align}
    \Theta^\star \le \max_{\ell\in[m]}\max_{u\in[k]\times[k]}\chi^2_{\ell,u}.
\end{align}
Now apply the assumed per-entry bound \eqref{eq:chi_quad_assume} 
\begin{align}
    \Theta^\star
    \le C_\chi\max_{\ell\in[m]}\max_{u\in[k]\times[k]}\vartheta_{\ell,u}^2.
\end{align}
Finally, use the non-spikiness condition in the smooth-signal regime~\ref{item:ii_non_spike}, i.e., $\max_{u\in[k]\times[k]}\vartheta_{\ell,u}^2 \le C_{\s{sp}}\frac{\calE_\ell}{k^2}$ for all $\ell \in [m]$, which gives 
\begin{align}
    \Theta^\star
    \le C_\chi C_{\s{sp}}\max_{\ell}\frac{\calE_\ell}{k^2}
    = C_\chi C_{\s{sp}}\frac{\calE}{k^2}.
\end{align}
This establishes \eqref{eq:Theta_energy_bound_proof} assuming \eqref{eq:chi_quad_assume}.

\paragraph{Conclude Corollary~\ref{cor:smooth_signal} from Theorem~\ref{thrm:lower_bound}.} We treat the non-consecutive and circular consecutive cases separately.
\begin{enumerate}
    \item \emph{Non-consecutive placements.} Assume $\calE=o\p{k\wedge \frac{n^2}{m^2k^2}}$. Then by \eqref{eq:Theta_energy_bound_proof},
    \begin{align}
        \Theta^\star \le C_\chi C_{\s{sp}}\frac{\calE}{k^2} 
        = o\p{\frac{1}{k}\ \wedge\ \frac{n^2}{m^2k^4}}.
    \end{align}
    In particular, since $C_{\chi}$ and $C_{\s{sp}}$ are fixed constants and  $\log(1+\delta)>0$ is fixed, the above $o(\cdot)$ bound implies that for all sufficiently large $n$,
    \begin{align}
        \Theta^\star \le \min\p{\frac{1}{k},\frac{n^2\log(1+\delta)}{2m^2k^4}},
    \end{align}
    so the non-consecutive impossibility condition in Theorem~\ref{thrm:lower_bound} holds,
    and therefore $d_{\s{TV}}(\P_{\calH_0},\P_{\calH_1})=o(1)$.

    \item \emph{Circular consecutive placements.} Assume $\calE=o\p{\log\p{1+\frac{n^2}{k^2m^2}}}$. Then \eqref{eq:Theta_energy_bound_proof} gives
    \begin{align}
        \Theta^\star
        \le C_\chi C_{\s{sp}}\frac{\calE}{k^2} =  o\p{\frac{1}{k^2}\log\p{1+\frac{n^2}{k^2m^2}}},
    \end{align}
    and hence, for all large $n$, 
    \begin{align}
        \Theta^\star \le \frac{1}{k^2}\log\p{1+\frac{n^2\log(1+\delta)}{4k^2m^2}}.
    \end{align}
    Thus the circular consecutive impossibility condition in Theorem~\ref{thrm:lower_bound} holds, and therefore $d_{\s{TV}}(\P_{\calH_0},\P_{\calH_1})=o(1)$.
\end{enumerate}

\paragraph{Verify the per-entry quadratic bound \eqref{eq:chi_quad_assume} in the two Gaussian models.}
It remains to justify the temporary assumption \eqref{eq:chi_quad_assume}. 

\begin{enumerate}
    \item \emph{Mean-shift.}
    Here $\calQ=\calN(0,1)$ and $\calP_{\ell;u}=\calN((M_\ell)_u,1)$, so
    \begin{align}
        \chi^2_{\ell,u}= \exp\ppp{\p{M_\ell}_u^2}-1.
    \end{align}
    Under the uniform boundedness condition~\ref{item:i_boundness} $|(M_\ell)_u|\le \vartheta_0$, convexity of $e^x$ on 
    $[0,\vartheta_0^2]$ gives 
    \begin{align}
        e^{(M_\ell)_u^2}-1 \le \frac{e^{\vartheta_0^2}-1}{\vartheta_0^2}\,(M_\ell)_u^2,
    \end{align}
    so \eqref{eq:chi_quad_assume} holds with $C_\chi=\frac{e^{\vartheta_0^2}-1}{\vartheta_0^2}$ and $\vartheta_{\ell,u}=(M_\ell)_u$.

    \item \emph{Variance-shift.} Here $\calQ=\calN(0,1)$ and $\calP_{\ell;u}=\calN\p{(0,1+(\Sigma_\ell)_u}$, with $0\le(\Sigma_\ell)_u\le \vartheta_0<1$. A direct calculation gives 
    \begin{align}
        \chi^2_{\ell;u}=\frac{1}{\sqrt{1-(\Sigma_\ell)_u^2}}-1.
    \end{align}
    Let $g(t)=(1-t)^{-1/2}-1$ on $[0,1)$. Then $g(0)=0$ and $g'(t)=\frac12(1-t)^{-3/2}$ is increasing, hence for $t\in[0,\vartheta_0^2]$,
    \begin{align}
        g(t) \le \p{\sup_{s\in[0,\vartheta_0^2]}g'(s)}t = \frac{1}{2(1-\vartheta_0^2)^{3/2}}t.
    \end{align}
    Taking $t=(\Sigma_\ell)_u^2$ yields
    \begin{align}
        \chi^2_{\ell;u}\le \frac{1}{2(1-\vartheta_0^2)^{3/2}}\,(\Sigma_\ell)_u^2,
    \end{align}
    so \eqref{eq:chi_quad_assume} holds with
    $C_\chi=\frac{1}{2(1-\vartheta_0^2)^{3/2}}$ and $\vartheta_{\ell,u}=(\Sigma_\ell)_u$.
\end{enumerate}

Combining these three steps completes the proof.

\section{Conclusion and Future Directions}

We analyzed detection in a finite-template inhomogeneous submatrix model for Gaussian matrices. The model permits multiple planted submatrices with coordinate-dependent signal structure, interpolating between the classical homogeneous mean-shift setting and fully heterogeneous alternatives. We established information-theoretic lower bounds via a $\chi^2$ second-moment argument and matching upper bounds based on global and scan statistics. In the smooth-signal regime, and under the associated sparsity conditions on $(m,k,n)$, the upper and lower bounds coincide up to logarithmic factors for both non-consecutive and consecutive placement models.

Our results highlight several structural features of submatrix detection. In the non-consecutive model, the number of planted submatrices affects the statistical boundary, and the information-theoretic threshold can lie strictly below the scan-based detection threshold in a nontrivial parameter regime. In the consecutive model, the scan procedure attains the information-theoretic threshold up to logarithmic factors. Under mild regularity conditions, heterogeneous templates exhibit the same detection scaling as the homogeneous case when expressed in terms of the total signal energy. The classical homogeneous model appears as a special case within our framework.

Several directions remain open. In the non-consecutive setting, as can be seen from Theorems~\ref{thrm:Detect_upper}--\ref{thrm:Detect_upper_variance}, both the computationally efficient global test and the computationally expensive scan test are needed to characterize the statistical limit. This suggests a statistical--computational gap: there is a parameter regime in which detection is information-theoretically possible (e.g., via the scan test), but no efficient algorithm is currently known. It would be interesting to provide evidence for such a gap using, for example, the framework of low-degree polynomials (see, e.g., \cite{Hopkins18,10.1007/978-3-030-97127-4_1}). On the modeling side, while the finite-template assumption isolates structured heterogeneity, extending the analysis to fully heterogeneous signals without template constraints would require new tools to control the overlap structure and second moments. It is also natural to consider non-Gaussian noise models or alternatives arising from more general exponential families, and to ask whether the effective energy characterization persists in those settings. More broadly, the finite-template model offers a controlled setting in which to study how structured inhomogeneity influences statistical and computational limits in high-dimensional detection problems. Similar phenomena are likely to arise in related structured matrix models. Finally, although this paper focuses on the detection problem, the recovery variant (i.e., identifying the planted submatrix exactly or partially) is also of interest.

\bibliographystyle{alpha}
\bibliography{bibfile}

\end{document}